\newtheorem{theorem}{Theorem}[section]
\newtheorem{corollary}[theorem]{Corollary}
\newtheorem{lemma}[theorem]{Lemma}
\newtheorem{proposition}[theorem]{Proposition}
\theoremstyle{definition}
\newtheorem{definition}[theorem]{Definition}
\newtheorem{calc}{Calculation}
\newtheorem{calc'}{Calculation}
\newtheorem{calc1}{Calculation}
\newtheorem{case'}{Case}
\newtheorem{case''}{Case}
\newtheorem{claim*}{Claim}
\newtheorem*{lma}{Lemma}
\newtheorem*{prop}{Proposition}
\newtheorem*{note}{Note}
\newtheorem{remark}{Remark}
\newcommand{\C}{\mathbb{C}}
\newcommand{\Z}{\mathbb{Z}}
\newcommand{\F}{\mathbb{F}}
\newcommand{\Up}{\operatorname{Up}}
\newcommand{\ed}{\operatorname{ed}}
\newcommand{\syl}{\operatorname{Syl}}
\newcommand{\trdeg}{\operatorname{trdeg}}
\newcommand{\mbf}{\mathbf}
\newcommand{\sym}{Sym}
\newcommand{\asym}{Antisym}
\newcommand{\asymo}{Antisym_0}
\newcommand{\divides}{\bigm|}
\newenvironment{theorem*}[2][Theorem]{\begin{trivlist}
\item[\hskip \labelsep {\bfseries #1}\hskip \labelsep {\bfseries #2}]}{\end{trivlist}}
\newenvironment{lemma*}[2][Lemma]{\begin{trivlist}
\item[\hskip \labelsep {\bfseries #1}\hskip \labelsep {\bfseries #2}]}{\end{trivlist}}
\newenvironment{corollary*}[2][Corollary]{\begin{trivlist}
\item[\hskip \labelsep {\bfseries #1}\hskip \labelsep {\bfseries #2}]}{\end{trivlist}}
\begin{document}
\title{\texorpdfstring{The essential $p$-dimension of the split finite quasi-simple groups of classical Lie type}{The essential p-dimension of the split finite quasi-simple groups of classical Lie type}}
\author{Hannah Knight\thanks{This work was supported in part by NSF Grant Nos. DMS-1811846 and DMS-1944862.}\\ Department of Mathematics, UCI, Irvine, CA, USA\\ {\color{blue}\href{mailto:hknight1@uci.edu}{hknight1@uci.edu}}} 
\date{}

\maketitle

\begin{abstract}

In this paper, we compute the essential $p$-dimension of the split finite quasi-simple groups of classical Lie type at the defining prime, specifically the quasi-simple groups arising from the general linear and special linear groups, the symplectic groups, and the orthogonal groups.

\end{abstract}

\tableofcontents

\bigskip

\section{Introduction}

The goal of this paper is to compute the essential $p$-dimension at the defining prime for the finite groups of classical Lie type.
Fix a field $k$. The essential dimension of a finite group $G$, denoted $\ed_k(G)$, is the smallest number of algebraically independent parameters needed to define a Galois $G$-algebra over any field extension $F/k$ (or equivalently $G\text{-torsors over }\text{Spec}F$). In other words, the essential dimension of a finite group $G$ is the supremum taken over all field extensions $F/k$ of the smallest number of algebraically independent parameters needed to define a Galois $G$-algebra over $F$.  The essential $p$-dimension of a finite group, denoted $\ed_k(G,p)$, is similar except that before taking the supremum, we allow finite extensions of $F$ of degree prime to $p$ and take the minimum of the number of parameters needed.  In other words, the essential $p$-dimension of a finite group is the supremum taken over all fields $F/k$ of the smallest number of algebraically independent parameters needed to define a Galois $G$-algebra over a field extension of $F$ of degree prime to $p$.  See Section \ref{edbackground} for more formal definitions. See also \cite{BR} and \cite{KM} for more detailed discussions. For a discussion of some interesting applications of essential dimension and essential $p$-dimension, see \cite{Reich}.

What is the essential dimension of the finite simple groups? This question is quite difficult to answer. A few results for small groups (not necessarily simple) have been proven. For example, it is known that $\ed_k(S_5) = 2$, $\ed_k(S_6) = 3$ for $k$ of characteristic not $2$ \cite{BF}, and $\ed_k(A_7) = \ed_k(S_7) = 4$ in characteristic $0$ \cite{Dun}.  It is also known that for $k$ a field of characteristic $0$ containing all roots of unity, $\ed_k(G) = 1$ if and only if $G$ is isomorphic to a cyclic group $\Z/n\Z$ or a dihedral group $D_m$ where $m$ is odd (\cite{BR}, Theorem 6.2). Various bounds have also been proven. See \cite{BR}, \cite{Mer}, \cite{Reich},\cite{Mor}, among others. For a nice summary of the results known in 2010, see \cite{Reich}.

We can find a lower bound to this question by considering the corresponding question for essential $p$-dimension. In this paper, we prove:
\begin{theorem}\label{mainthm}

\begin{enumerate}[(1)] Let $p$ be a prime, $k$ a field with $\text{char } k \neq p$. Then

\item (Theorem \ref{edPSLnallp}, Bardestani-Mallahi-Karai-Salmasian $p \neq 2$ \cite{BMS}, K. $p = 2$)
$$\ed_k(PSL_n(\F_{p^r}),p) = \ed_k(GL_n(\F_{p^r}),p) = rp^{r(n-2)}.$$

\item (Theorem \ref{edPSp})
$$\ed_k(PSp(2n,p^r),p) = \ed_k(Sp(2n,p^r),p) = \begin{cases} rp^{r(n-1)}, &p \neq 2 \text{ or } n =2\\
r2^{r(n-1)-1}(2^{r(n-2)} + 1), &p = 2, n > 2 \end{cases}$$

\item (Theorem \ref{edO2n})
\begin{align*} 
 \ed_k(P\Omega^\epsilon(2m,p^r),p) = \ed_k(\Omega^\epsilon(2m,p^r),p) = \begin{cases}  2r, &2m = 4, \text{ any } p\\
 rp^{2r(m-2)}, &2m > 4, \text{ any } p\\
\end{cases}\end{align*}
Furthermore, $\ed_k(O^\epsilon(2m,2^r),2) = 1 + \ed_k(\Omega^\epsilon(2m,2^r),2),$ and  for $p \neq 2$,  $\ed_k(O^\epsilon(2m,p^r),p) = \ed_k(\Omega^\epsilon(2m,p^r),p)$.
\end{enumerate}
\end{theorem}

\begin{remark} In Theorem \ref{edPSp}, for $p = 2, n = 2, r = 1$, we have $PSp(4,2)' \cong A_6$, and so $\ed_k(PSp(4,2)',2) = \ed_k(A_6,2) = 2$. Except for $p = 2, n = 2, r = 1$, $PSp(2n,p^r) = PSp(2n,p^r)'$ is simple. The methods of this paper can recover the proof that $\ed_k(PSp(4,2),2) = \ed_k(S_6,2) = 3$ and that $\ed_k(PSp(4,2)',2) = \ed_k(A_6,2) = 2$, but for brevity, because these are known theorems, we will omit the proofs here. \end{remark}

\begin{remark} If $\text{char } k = p$, then $\ed_k(G,p) = 1$ unless $p \nmid |G|$, in which case $\ed_k(G,p) = 0$ \cite{RV}. So we have a full description of $\ed_k(G,p)$ for the finite simple groups of Lie type for any field $k$ and any $p$.  \end{remark}

\begin{remark} Dave Benson independently proved $\ed_\C(Sp(2n,p),p) = p^{n-1}$ for $p$ odd (\cite{BrFa}, Appendix A). \end{remark}

\begin{remark} The following results were known prior to this paper: 
\begin{enumerate}

\item $\ed_\C(PSL_n(\F_{p^r},p)) = \ed_\C(GL_n(\F_{p^r})) = rp^{r(n-2)}$ for $p \neq 2$ (\cite{BMS}, Theorems 1.1 and 1.2). 

\item Duncan and Reichstein calculated the essential $p$-dimension of the pseudo-reflection groups: For $G$ a pseudo-reflection group with $k[V]^G = k[f_1,\ldots,f_n]$, $d_i = \text{deg}(f_i),$ $\ed_k(G,p) = a(p) = |\{i : d_i \text{ is divisible by } p\}|$ (\cite{DR}, Theorem 1.1). These groups overlap with the groups above in a few small cases. See the appendix (\ref{Ap1}) for the overlapping cases .
%(The values of $d_i$ are in \cite{ST}, Table VII): 
%\begin{enumerate}[(i)]
%\item Group 12 in the Shephard-Todd classification, $Z_2.O \cong GL_2(\F_3)$: $d_1, d_2$ are $6,8$; so 
%$$\ed_k(Z_2.O,3) = 1 = \ed_k(GL_2(\F_3),3).$$
%
%\item Group 23 in the Shephard-Todd classification, $W(H_3) \cong \Z/2\Z \times PSL_2(\F_5)$: $d_1, \ldots d_3$ are $2,6,10$; so 
%$$\ed_k(W(H_3),5) = 1 = \ed_k(PSL_2(\F_5),5).$$
%
%\item Group 32 in the Shephard-Todd classification, $W(L_4) \cong \Z/3\Z \times Sp(4,3)$: $d_1, \ldots d_4$ are $12,18,24,30$; so 
%$$\ed_k(W(L_4),3) = 4 = 1 + \ed_k(Sp(4,3),3).$$
%
%\item Group 33 in the Shephard-Todd classification, $W(K_5) \cong \Z/2\Z \times PSp(4,3) \cong \Z/2\Z \times PSU(4,2)$: $d_1, \ldots d_5$ are $4, 6, 10, 12, 18$; so 
%$$\ed_k(W(K_5),3) = 3 = \ed_k(PSp(4,3),3)$$
%and 
%$$\ed_k(W(K_5),2) = 5 = 1 +\ed_k(PSU(4,2^2),2).$$
%
%\item Group 35 in the Shephard-Todd classification, $W(E_6) \cong O^-(6,2)$: 
%$d_1, \ldots, d_6$ are $2,5,6,8,9,12$; so 
%$$\ed_k(W(E_6),2) = 4 = \ed_k(O^-(6,2),2).$$
%
%\item Group 36 in the Shephard-Todd classification, $W(E_7) \cong \Z/2\Z \times Sp(6,2)$: $d_1, \ldots, d_7$ are $2,6,8,10,12,14,18$; so 
%$$\ed_k(W(E_7),2) = 7 = 1 + \ed_k(Sp(6,2),2).$$
%\end{enumerate}

\item Reichstein and Shukla calculated the essential $2$-dimension of double covers of the symmetric and alternating groups in characteristic $\neq 2$: Write $n = 2^{a_1} + \dots + 2^{a_s}$, where $a_1 > a_2 > \ldots > a_s \geq 0$. For $\tilde{S_n}$ a double cover of $S_n$, $\ed_k(\tilde{S_n},2) =2^{\lfloor(n-s)/2\rfloor}$, and for $\tilde{A_n}$ a double cover of $A_n$, $\ed_k(\tilde{A_n},2) = 2^{\lfloor(n-s-1)/2\rfloor}$ (\cite{RS}, Theorem 1.2). These groups overlap with the groups above in a few small cases: $\tilde{A_4} \cong SL_2(3)$, $\tilde{A_5} \cong SL_2(5)$, $\tilde{A_6} \cong SL_2(9)$, $\tilde{S_4^+} \cong GL_2(3)$. However, in each case the defining prime is not $2$, and as I only calculate the essential dimension at the defining prime, there is no overlap with the results of Reichstein and Shukla.
 
\end{enumerate} \end{remark}

\subsection{General Outline for Proofs}

\noindent The key tools in the proofs of Theorem \ref{mainthm} are the Karpenko-Merkurjev Theorem (Theorem \ref{KM4.1}), a lemma of Meyer and Reichstein (Lemma \ref{BMKS3.5}), and Wigner Mackey Theory.

\begin{theorem}\label{KM4.1} [Karpenko-Merkurjev \cite{KM}, Theorem 4.1] Let $G$ be a $p$-group, $k$ a field with $\text{char } k \neq p$ containing a primitive $p$-th root of unity. Then $\ed_k(G,p) = \ed_k(G)$ and $\ed_k(G,p)$ coincides with the least dimension of a faithful representation of $G$ over $k$.
\end{theorem}

\noindent The Karpenko-Merkurjev Theorem allows us to translate the question for $p$-groups formulated in terms of extensions and transcendence degree into a question of representation theory.  

\begin{lemma}\label{BMKS3.5}[\cite{MR}, Lemma 2.3] Let $k$ be a field with $\text{char } k \neq p$ containing $p$-th roots of unity. Let $H$ be a finite $p$-group and let $\rho$ be a faithful representation of $H$ of minimal dimension. Then $\rho$ decomposes as a direct sum of exactly $r = \text{rank}(Z(H))$ irreducible representations
$$\rho = \rho_1 \oplus \ldots \oplus \rho_r.$$ and 
if $\chi_i$ are the central characters of $\rho_i$, then $\{\chi_i|_{\Omega_1(Z(H))}\}$ is a basis for $\widehat{\Omega_1}(Z(H))$ over $k$.\\
($\Omega_1(Z(H))$ is defined to be the largest elementary abelian $p$-group contained in $Z(H)$; see Definition \ref{Def3.1}.) \end{lemma}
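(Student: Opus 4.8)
The plan is to deduce the statement from three standard facts about finite $p$-groups together with an elementary linear-algebra argument over $\F_p$. Write $\rho = \rho_1 \oplus \cdots \oplus \rho_m$ as a direct sum of irreducibles. First I would record the key reduction: for a finite $p$-group $H$, a representation $\sigma$ of $H$ is faithful if and only if its restriction to $\Omega_1(Z(H))$ is faithful. This follows because a nontrivial normal subgroup $N \trianglelefteq H$ satisfies $N \cap Z(H) \neq 1$ (count the conjugation orbits of $H$ on $N$ modulo $p$: the fixed set is $N \cap Z(H)$, contains $1$, and has order divisible by $p$), and a nontrivial subgroup of the abelian $p$-group $Z(H)$ always contains an element of order $p$, hence meets $\Omega_1(Z(H))$; applying this to $N = \ker\sigma$ gives the claim.

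Next I would introduce the central characters. Since $Z(H)$ is central, Schur's lemma implies $Z(H)$ acts on each $\rho_i$ by scalars, through a character $\chi_i$; restricted to the elementary abelian group $\Omega_1(Z(H))$ (which has exponent $p$) this is a homomorphism into the group of $p$-th roots of unity in $k$, so $\chi_i|_{\Omega_1(Z(H))}$ is a well-defined vector in the $\F_p$-dual $\widehat{\Omega_1(Z(H))}$. Writing $r = \operatorname{rank}(Z(H)) = \dim_{\F_p}\Omega_1(Z(H))$, one has $\ker\big(\rho|_{\Omega_1(Z(H))}\big) = \bigcap_{i=1}^m \ker\big(\chi_i|_{\Omega_1(Z(H))}\big)$, so by the reduction above $\rho$ is faithful precisely when the vectors $\chi_1|_{\Omega_1(Z(H))}, \dots, \chi_m|_{\Omega_1(Z(H))}$ span $\widehat{\Omega_1(Z(H))}$; in particular $m \ge r$.

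Finally I would invoke minimality of $\dim\rho$. If $m > r$, then a spanning family of size larger than $r$ in an $r$-dimensional space contains a redundant vector $\chi_j|_{\Omega_1(Z(H))}$; deleting the summand $\rho_j$ leaves a representation whose central characters still span, hence still faithful by the previous step, but of strictly smaller dimension, contradicting minimality. Therefore $m = r$, and the $r$ vectors $\chi_i|_{\Omega_1(Z(H))}$ form a spanning set of size $r$ in an $r$-dimensional space, i.e. a basis of $\widehat{\Omega_1(Z(H))}$.

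The only genuinely group-theoretic ingredient is the faithfulness-detection step (that $\ker\rho = 1$ can be checked after restriction to $\Omega_1$ of the center), and that is where I expect to spend the most care; the remainder is Schur's lemma plus counting dimensions over $\F_p$. A secondary technical point is ensuring the central characters take values in $k$: because $\Omega_1(Z(H))$ has exponent $p$, one only needs $k$ to contain the $p$-th roots of unity, which is consistent with the hypotheses under which the lemma is applied (and over a smaller field one may pass to $\bar k$ without changing the minimal faithful dimension once $k$ already contains $\mu_p$).
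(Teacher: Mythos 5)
The paper does not supply a proof of this lemma --- it cites \cite{MR}, Lemma~2.3 directly in the statement --- so there is no in-text argument to compare against. Your reconstruction is correct and is the standard one: faithfulness of a representation of a finite $p$-group $H$ can be tested on $\Omega_1(Z(H))$, because a nontrivial normal subgroup $N\trianglelefteq H$ meets $Z(H)$ (class equation on $N$ gives $|N|\equiv |N\cap Z(H)| \pmod p$, and $p\mid |N|$ since $N$ is a nontrivial $p$-group, so $|N\cap Z(H)|\geq p$), and a nontrivial $p$-subgroup of $Z(H)$ meets $\Omega_1(Z(H))$; Schur's lemma attaches a central character $\chi_i$ to each irreducible summand, and since $\ker(\rho|_{\Omega_1(Z(H))}) = \bigcap_i \ker(\chi_i|_{\Omega_1(Z(H))})$, faithfulness becomes the $\F_p$-linear condition that the vectors $\chi_i|_{\Omega_1(Z(H))}$ span $\widehat{\Omega_1}(Z(H))$; minimality of $\dim\rho$ then forbids a redundant summand, forcing exactly $r$ irreducibles whose restricted characters are a basis. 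You correctly identify the one genuine hypothesis your Schur's-lemma step needs: $\text{char } k \neq p$ alone does not make $k$ a splitting field for $H$, so the central character need not be $k$-valued without more. In this paper that gap is closed by Lemma~\ref{pcloselem}/Corollary~\ref{pclose}: one may replace $k$ by its $p$-closure $k_p$, which contains all $p$-power roots of unity and hence splits $H$, without changing $\ed_k(\cdot,p)$. Under that standing reduction your proof goes through as written.
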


\noindent This lemma allows us to translate a question of analyzing faithful representations into a question of analyzing irreducible representations.  Our main tool for the case at hand is Wigner-Mackey Theory. This method from representation theory allows us to classify the irreducible representations for groups of the form $\Delta \rtimes L$ with $\Delta$ abelian. (See section \ref{WigMac}.)

By Lemma \ref{Lempsyl}, it suffices to consider the Sylow $p$-subgroups. By Corollary \ref{rootofunity}, we may assume that our field $k$ contains $p$-th roots of unity. Then by the Karpenko-Merkurjev Theorem, we need to find the minimal dimension of a faithful representation of the Sylow $p$-subgroups. Throughout this paper, we will use the notation $\syl_p(G)$ to denote the set of Sylow $p$-subgroups of $G$. Let $S \in \syl_p(G)$.   By Lemma \ref{BMKS3.5}, if the center of $S$ has rank $s$, a faithful representation $\rho$ of $S$ of minimal dimension decomposes as a direct sum
$$\rho = \rho_1 \oplus \ldots \oplus \rho_{s}$$
of exactly $s$ irreducibles, and if $\chi_i$ are the central characters of $\rho_i$, then $\{\chi_i|_{\Omega_1(Z(S))}\}$ is a basis for $\widehat{\Omega_1}(Z(S))$ (see Definition \ref{Def3.1}). 

Our proofs will follow the following steps:

\begin{itemize}

\item Step 1: Find the Sylow $p$-subgroups and their centers.

\item Step 2: Classify the irreducible representations of the Sylow $p$-subgroups using Wigner-Mackey theory.

\item Step 3: Construct upper and lower bounds using the classification in step 2.

\end{itemize}

\begin{remark} A more recent paper of Bardestani, Mallahi-Karai, and Salmasian \cite{BMS2} gives a way of calculating the essential dimension by means of commutator matrices. We do not pursue this method here.
\end{remark}

\noindent \textbf{Acknowledgements:} I would like to thank Jesse Wolfson for his kind mentorship and generous support. I am also grateful to Vladimir Chernousev, Alexander Duncan, Najmuddin Fakhruddin, Nate Harman, Michael Hehmann, Klaus Lux, Keivan Mallahi-Karai, Mark MacDonald, Anubhav Nanavaty, Zinovy Reichstein, Hadi Salmasian, Federico Scavia, Jean-Pierre Serre, Alex Sutherland, and  Burt Totaro for very helpful comments on a draft. I would also like to thank the anonymous referees for helpful comments which greatly streamlined the paper and improved the exposition.
 
\section{\texorpdfstring{Essential $p$-Dimension Background}{Essential p-Dimension Background}}\label{edbackground}
Fix a field $k$. Let $G$ be a finite group, $p$ a prime.

\begin{definition} Let $T: \text{Fields}/k \to \text{Sets}$ be a functor. Let $F/k$ be a field extension, and $t \in T(F)$. The\emph{ essential dimension of} $\mathit{t}$ is 
$$\ed_k(t) = \min_{F' \subset F \text{ s.t. } t \in Im(T(F') \to T(F))} \trdeg_k(F').$$\end{definition}

\begin{definition} Let $T: \text{Fields}/k \to \text{Sets}$ be a functor. The \emph{essential dimension of} $\mathit{T}$ is
$$\ed_{k}(T) = \sup_{t \in T(F), \text{ } F/k \in \text{Fields}/k} \ed_k(t).$$\end{definition} 

\begin{definition} For $G$ be a finite group, let $H^1(-;G):\text{Fields}/k \to \text{Sets}$ be defined by $H^1(-;G)(F/k) = \{\text{the isomorphism classes of } G\text{-torsors over }\text{Spec}F \}$. 
\end{definition}

\begin{definition} The \emph{essential dimension of} $\mathit{G}$ is
$$\ed_k(G) = \ed_k(H^1(-;G)).$$ \end{definition}

\begin{definition} Let $T: \text{Fields}/k \to \text{Sets}$ be a functor. Let $F/k$ be a field extension, and $t \in T(F)$. The \emph{essential} $\mathit{p}$\emph{-dimension of} $\mathit{t}$ is 
$$\ed_k(t,p) = \min \trdeg_k(F'')$$
where the minimum is taken over all
\begin{align*}
F'' \subset F' \text{ a finite extension}, \text{ with } F \subset F'\\
[F':F] \text{ finite } \text{ s.t. } p \nmid [F':F] \text{ and }\\
\text{the image of } t \text{ in } T(F') \text{ is in } \text{Im}(T(F'') \to T(F'))
\end{align*}
\end{definition}

\begin{note} ${\displaystyle \ed_k(t,p) = \min_{F \subset F', \text{ } p \text{ } \nmid \text{ }[F':F]} \ed_k(t|_{F'}).}$
\end{note}

\begin{definition} Let $T: \text{Fields}/k \to \text{Sets}$ be a functor. The \emph{essential} $\mathit{p}$\emph{-dimension of} $\mathit{T}$ is
$$\ed_{k}(T,p) = \sup_{t \in T(F), \text{ } F/k \in \text{Fields}/k} \ed_k(t,p).$$\end{definition}

\begin{definition} The \emph{essential} $\mathit{p}$\emph{-dimension of} $\mathit{G}$ is 
$$\ed_k(G,p) = \ed_k(H^1(-;G),p).$$ \end{definition}

\noindent The next lemma follows directly from the definitions:

\begin{lemma}\label{subseted}
If $H \subset G$, then $\ed_k(H,p) \leq \ed_k(G,p)$.
\end{lemma}

The key to proving the above lemma is that given a Galois $H$-algebra $E$ over $F$, we can extend to a Galois $G$-algebra over F. See the appendix (\ref{Ap2}) for the proof.

\begin{lemma}\label{Lempsyl} Let $S \in \syl_p(G)$. Then $\ed_k(G,p) = \ed_k(S,p).$  \end{lemma}

The key to proving the above lemma is that given a Galois $G$-algebra $E$ over $F$ there exists an extension of $F$, $F_0 = E^H$, such that $E$ is a Galois $H$-algebra over $E^H$.  See the appendix (\ref{Ap3}) for the proof.

The following lemma allows us to extend the underlying field $k$ when calculating essential $p$-dimension, so long as the extension is of degree prime to $p$. In particular, this allows us to assume our field $k$ contains $p$-th roots of unity (Corollary \ref{rootofunity}).

\begin{lemma}[\cite{KM}, Remark 4.8]\label{primetopext} If $k$ a field of characteristic $\neq p$, $k_1/k$ a finite field extension of degree prime to $p$, then 
$ \ed_{k}(G,p) = \ed_{k_1}(G,p).$
\end{lemma}

(The idea for the lemma above was brought to my attention by Federico Scavia and Zinovy Reichstein.) The key to proving Lemma \ref{primetopext} is the fact that given a field extension $F/k$ and a finite field extension $k_1/k$, $\trdeg_k(Fk_1) = \trdeg_k(F)$. See the appendix (\ref{Ap4}) for the proof. Putting Lemma \ref{primetopext} together with Lemma \ref{Lempsyl}, we get
\begin{corollary}\label{sylp} If $k_1/k$ a finite field extension of degree prime to $p$, $S \in \syl_p(G)$, then $\ed_k(G,p) = \ed_k(S,p) = \ed_{k_1}(S,p).$ \end{corollary}

\begin{corollary}\label{rootofunity} If $k$ a field of characteristic $\neq p$, $S \in \syl_p(G)$, $\zeta$ a primitive $p$-th root of unity, then
$$\ed_k(G,p) = \ed_{k(\zeta)}(S,p).$$ \end{corollary}

\begin{proof} Since $\zeta$ is a primitive $p$-th root of unity, $\zeta$ is a root of the polynomial $x^{p} - 1 = (x-1)(1 + \ldots + x^{p-1})$. Then the minimal polynomial over a field of characteristic prime to $p$ divides $1 + \ldots + x^{p-1}$ and so has degree prime to $p$.  So we have that $p \nmid [k(\zeta):k]$.  \end{proof}

\begin{note} By the corollary above, when calculating the essential $p$-dimension over a field $k$ of characteristic $\neq p$, we may assume that $k$ contains a primitive $p$-th root of unity. \end{note}

The following theorem and corollary from \cite {KM} will also be useful for our approach:
\begin{theorem}[Karpenko-Merkurjev \cite{KM}, Theorem 5.1]
\label{KM5.1} Let $G_1$ and $G_2$ be two $p$-groups, $k$ a field with $\text{char } k \neq p$ containing a primitive $p$-th root of unity, then $\ed_k(G_1\times G_2) = \ed_k(G_1) + \ed_k(G_2).$
\end{theorem}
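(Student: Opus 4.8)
The plan is to use Theorem~\ref{KM4.1} to replace the claim by a statement about minimal faithful representations, and then to prove that statement by combining the Meyer--Reichstein decomposition (Lemma~\ref{BMKS3.5}) with an elementary combinatorial inequality. For a finite $p$-group $G$ let $m_k(G)$ denote the least dimension of a faithful $k$-representation of $G$; by Theorem~\ref{KM4.1}, $\ed_k(G)=\ed_k(G,p)=m_k(G)$ when $\operatorname{char}k\neq p$, and when computing $\ed_k(\cdot,p)$ we may, as recorded after Corollary~\ref{pclose}, assume $k$ contains all $p$-power roots of unity, so that $k$ is a splitting field for every finite $p$-group (Roquette's theorem). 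It therefore suffices to prove $m_k(G_1\times G_2)=m_k(G_1)+m_k(G_2)$ over such a $k$, where moreover $m_k$ is intrinsic and $\operatorname{Irr}(G_1\times G_2)$ consists precisely of the external products $\sigma\boxtimes\tau$ with $\sigma\in\operatorname{Irr}(G_1)$, $\tau\in\operatorname{Irr}(G_2)$. The inequality $m_k(G_1\times G_2)\le m_k(G_1)+m_k(G_2)$ is immediate: inflating faithful representations of dimensions $m_k(G_1)$ and $m_k(G_2)$ along the projections $G_1\times G_2\to G_i$ gives representations with kernels $1\times G_2$ and $G_1\times 1$, so their direct sum is faithful.

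For the reverse inequality, write $Z=\Omega_1(Z(G))$ for a $p$-group $G$, and for $\chi\in\widehat Z$ let $\mu(G,\chi)$ be the least dimension of an irreducible representation of $G$ whose central character restricts to $\chi$ on $Z$; thus $\mu(G,\chi)\ge 1$ always and $\mu(G,\mathbf 1)=1$. I first claim that $m_k(G)=\min\sum_{i=1}^{r}\mu(G,\chi_i)$, the minimum taken over $\mathbb F_p$-bases $\{\chi_1,\dots,\chi_r\}$ of $\widehat Z$. Indeed, for such a basis choose minimal irreducibles $\rho_i$ with central characters restricting to the $\chi_i$; any nontrivial normal subgroup of the $p$-group $G$ meets $Z(G)$ and so contains some $z\in Z$, and if $z\in\ker(\bigoplus\rho_i)$ then $\chi_i(z)=1$ for every $i$, contradicting that the $\chi_i$ span $\widehat Z$ --- hence $\bigoplus\rho_i$ is faithful (Lemma~\ref{BMKS3.4}) and $m_k(G)\le\sum\mu(G,\chi_i)$. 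Conversely a minimal faithful representation $\rho$ decomposes by Lemma~\ref{BMKS3.5} as $\bigoplus_{i=1}^{r}\rho_i$ with the $\rho_i$ irreducible and their central characters restricting to a basis of $\widehat Z$, so $m_k(G)=\sum\dim\rho_i\ge\sum\mu(G,\chi_{\rho_i}|_Z)\ge\min_{\text{basis}}\sum\mu(G,\chi_i)$.

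Applying this to $G_1\times G_2$: since $\Omega_1(Z(G_1\times G_2))=\Omega_1(Z(G_1))\times\Omega_1(Z(G_2))$, writing $Z_j=\Omega_1(Z(G_j))$ we have $\widehat{Z_1\times Z_2}=\widehat{Z_1}\oplus\widehat{Z_2}$, and because irreducibles of the product are the $\sigma\boxtimes\tau$, $\mu(G_1\times G_2,(\alpha,\beta))=\mu(G_1,\alpha)\mu(G_2,\beta)$. So it remains to show that for every basis $\{(\alpha_i,\beta_i)\}_{i=1}^{r_1+r_2}$ of $\widehat{Z_1}\oplus\widehat{Z_2}$ we have $\sum_i\mu(G_1,\alpha_i)\mu(G_2,\beta_i)\ge m_k(G_1)+m_k(G_2)$. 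Using $(a-1)(b-1)\ge 0$ for integers $a,b\ge 1$ gives $\mu(G_1,\alpha_i)\mu(G_2,\beta_i)\ge\mu(G_1,\alpha_i)+\mu(G_2,\beta_i)-1$, so the sum is at least $\sum_i\mu(G_1,\alpha_i)+\sum_i\mu(G_2,\beta_i)-(r_1+r_2)$. The vectors $\alpha_i$ span $\widehat{Z_1}$ (project the basis), so $r_1$ of them form a basis, contributing at least $m_k(G_1)$, while the remaining $r_2$ terms contribute at least $1$ each; hence $\sum_i\mu(G_1,\alpha_i)\ge m_k(G_1)+r_2$, and symmetrically $\sum_i\mu(G_2,\beta_i)\ge m_k(G_2)+r_1$. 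Substituting gives $\sum_i\mu(G_1,\alpha_i)\mu(G_2,\beta_i)\ge m_k(G_1)+m_k(G_2)$; combined with the upper bound and Theorem~\ref{KM4.1}, this yields $\ed_k(G_1\times G_2)=\ed_k(G_1)+\ed_k(G_2)$.

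The substantive steps are the field reduction allowing us to work over a splitting field (so that $m_k$ is intrinsic and $\operatorname{Irr}(G_1\times G_2)=\operatorname{Irr}(G_1)\boxtimes\operatorname{Irr}(G_2)$) and the passage from Lemmas~\ref{BMKS3.5} and~\ref{BMKS3.4} to the formula $m_k(G)=\min_{\text{basis}}\sum_i\mu(G,\chi_i)$; once these are in place, the combinatorial inequality is easy and, notably, does not require the fact that representation dimensions of $p$-groups are powers of $p$.
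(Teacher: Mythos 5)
The paper does not prove this statement; it is quoted directly from Karpenko--Merkurjev (\cite{KM}, Theorem 5.1) and used as a black box, so there is no in-paper argument to compare against. That said, your proof is correct and self-contained given the surrounding machinery. The reduction to minimal faithful representations via Theorem~\ref{KM4.1} and Corollary~\ref{pclose} is sound, the intermediate formula $m_k(G)=\min_{\text{basis}}\sum_i\mu(G,\chi_i)$ follows cleanly from Lemmas~\ref{BMKS3.4} and~\ref{BMKS3.5}, the multiplicativity $\mu(G_1\times G_2,(\alpha,\beta))=\mu(G_1,\alpha)\mu(G_2,\beta)$ is valid once $k$ is a splitting field for both factors, and the closing combinatorial estimate (using $ab\ge a+b-1$ for $a,b\ge1$ together with the spanning of the projected bases) checks out, including the book-keeping with $r_1$ and $r_2$. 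One small imprecision: the assertion that a field of characteristic $\neq p$ containing all $p$-power roots of unity splits every finite $p$-group is more directly a consequence of Brauer's splitting-field theorem (a group of exponent $n$ is split by any field containing a primitive $n$th root of unity) than of Roquette's theorem, which concerns Schur indices over $\mathbb{Q}$; citing Brauer would be cleaner, though the conclusion you need is the same. Your argument is in the same spirit as Karpenko--Merkurjev's own proof of their Theorem~5.1, which likewise reduces to counting dimensions of irreducible summands indexed by a basis of the dual of the socle, so this is a faithful reconstruction rather than a genuinely different route.
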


\begin{corollary}\label{abpgp} Let $G$ be a finite abelian $p$-group, $k$ a field with $\text{char } k \neq p$ containing a primitive $p$-th root of unity. Then $\ed_k(G) = \text{rank}(G)$.\end{corollary}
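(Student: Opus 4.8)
The plan is to derive the statement directly from Theorem \ref{KM4.1}, Theorem \ref{KM5.1} and Corollary \ref{pr}. By Corollary \ref{pclose} and the reduction recorded just after it, in computing $\ed_k(G,p)$ we may assume $k$ contains all primitive $p$-power roots of unity; by Theorem \ref{KM4.1} we have $\ed_k(G)=\ed_k(G,p)$, so it suffices to compute $\ed_k(G,p)$, and under this standing assumption Theorem \ref{KM4.1} identifies it with the least dimension of a faithful $k$-representation of $G$. Write $G\cong\prod_{i=1}^{r}\Z/p^{a_i}\Z$ with each $a_i\geq 1$, where $r$ is the number of invariant factors; note that $r=\dim_{\mathbb{F}_p}(G/pG)=\text{rank}(G)$.

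For the upper bound, apply Theorem \ref{KM5.1} inductively to obtain $\ed_k(G)=\sum_{i=1}^{r}\ed_k(\Z/p^{a_i}\Z)$, and observe that $\ed_k(\Z/p^{a}\Z)=1$ for every $a\geq 1$: since $k$ now contains a primitive $p^{a}$-th root of unity, $\Z/p^{a}\Z$ embeds in $k^{\times}$, giving a faithful one-dimensional representation, so $\ed_k(\Z/p^{a}\Z)\leq 1$ by Theorem \ref{KM4.1}, while it is at least $1$ since the group is nontrivial. Hence $\ed_k(G)\leq r$.

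For the lower bound, $G$ contains the subgroup $G[p]$ of elements of order dividing $p$, and $G[p]\cong(\Z/p\Z)^{r}$ because each factor $\Z/p^{a_i}\Z$ has exactly one subgroup of order $p$. Since essential $p$-dimension does not decrease on subgroups, Corollary \ref{pr} yields
$$\ed_k(G,p)\;\geq\;\ed_k\!\left((\Z/p\Z)^{r},\,p\right)\;=\;r.$$
Combined with the upper bound and $\ed_k(G)=\ed_k(G,p)$, this gives $\ed_k(G)=\ed_k(G,p)=r=\text{rank}(G)$.

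No serious obstacle is expected; the statement is bookkeeping on top of the cited results. The points to handle with care are the passage to a field containing the needed roots of unity---precisely what makes $\Z/p^{a}\Z$ admit a faithful one-dimensional representation---and the identification of $\text{rank}(G)$ both with the number of cyclic factors of $G$ and with $\dim_{\mathbb{F}_p}G[p]$, so that the two bounds coincide. Alternatively, one may bypass Theorem \ref{KM5.1}: over the enlarged $k$ every irreducible representation of the abelian group $G$ is a character, a sum of characters $\chi_1,\dots,\chi_n\in\widehat{G}$ is faithful iff $\bigcap_i\ker\chi_i=\{1\}$, which by duality for finite abelian groups holds iff the $\chi_i$ generate $\widehat{G}$, and the minimal size of a generating set of $\widehat{G}\cong G$ is $\text{rank}(G)$; so by Theorem \ref{KM4.1} the least faithful dimension equals $\text{rank}(G)$.
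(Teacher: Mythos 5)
Your argument is correct and matches the route the paper intends: decompose $G$ into cyclic factors, pass to the $p$-closure so each $\Z/p^{a_i}\Z$ has a faithful one-dimensional representation, and invoke Theorem~\ref{KM4.1} and Theorem~\ref{KM5.1} to conclude $\ed_k(G)=\ed_k(G,p)=r=\mathrm{rank}(G)$. The added lower-bound argument via $G[p]\cong(\Z/p\Z)^r$ and Corollary~\ref{pr} is redundant once KM5.1 gives the exact equality, but it is harmless, and the character-theoretic alternative you sketch at the end is equally valid.
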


\section{Representation Theory Background}

\begin{definition}\label{Def3.1} Let $H$ be a $p$-group. Define $\Omega_1(Z(H))$ (also called the socle of $H$) to be the largest elementary abelian $p$-group contained in $Z(H)$, i.e. $\Omega_1(Z(H)) = \{z \in Z(H) : z^p = 1\}$. \end{definition}

\begin{definition} For $G$ an abelian group, $k$ a field, let $\widehat{G}$ denote the group of characters of $G$ (homomorphisms from $G$ to $k^\times$). We will use the notation $\widehat{\Omega_1}(Z(H))$ for the character group of $\Omega_1(Z(H))$.\end{definition}

The next lemma is due to Meyer-Reichstein \cite{MR} and reproduced in \cite{BMS}.

\begin{lemma}[\cite{MR}, Lemma 2.3]\label{BMKS3.4} Let $k$ be a field with $\text{char } k \neq p$ containing $p$-th roots of unity. Let $H$ be a finite $p$-group and let $(\rho_i: H \to GL(V_i))_{1 \leq i \leq n}$ be a family of irreducible representations of $H$ with central characters $\chi_i$. Suppose that $\{\chi_i|_{\Omega_1(Z(H))} : 1 \leq i \leq n\}$ spans $\widehat{\Omega_1}(Z(H))$. Then $\bigoplus_i \rho_i$ is a faithful representation of $H$.
\end{lemma}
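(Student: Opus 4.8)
The plan is to show directly that $\rho := \bigoplus_{i=1}^n \rho_i$ has trivial kernel. Write $N = \ker\rho = \bigcap_{i=1}^n \ker\rho_i$, a normal subgroup of $H$; faithfulness of $\rho$ is exactly the assertion $N = \{1\}$, and I would argue this by contradiction, assuming $N \neq \{1\}$.

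First I would push the putative kernel into the socle. Since $H$ is a finite $p$-group and $N \trianglelefteq H$ is nontrivial, the standard class-equation argument (the one giving $Z(H) \neq \{1\}$) applied to the conjugation action of $H$ on $N$ yields $N \cap Z(H) \neq \{1\}$: the $H$-orbits in $N$ have $p$-power size, their fixed points are exactly $N \cap Z(H)$, and counting modulo $p$ forces a nontrivial fixed point. Then $N \cap Z(H)$ is a nontrivial subgroup of the finite abelian group $Z(H)$, so it contains an element of order $p$, which by Definition \ref{Def3.1} lies in $\Omega_1(Z(H))$. Hence I can choose $z \in N \cap \Omega_1(Z(H))$ with $z \neq 1$.

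Next I would use centrality together with $z \in \ker\rho_i$. For each $i$, since $z$ is central, $\rho_i(z)$ acts as the scalar $\chi_i(z)$, and since $z \in \ker\rho_i$ this scalar is $1$; thus $\chi_i|_{\Omega_1(Z(H))}$ vanishes at $z$ for every $i$, and therefore so does every element of the span of $\{\chi_i|_{\Omega_1(Z(H))}\}$, which by hypothesis is all of $\widehat{\Omega_1}(Z(H))$. But $\Omega_1(Z(H))$ is a finite abelian group whose character group separates its points — this is where I use that $\operatorname{char}k \neq p$, so that, passing to the $p$-closure if necessary via Lemma \ref{pcloselem} and Corollary \ref{pclose}, $k^\times$ contains the requisite $p$-th roots of unity — so the only point of $\Omega_1(Z(H))$ annihilated by all of $\widehat{\Omega_1}(Z(H))$ is the identity. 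This contradicts $z \neq 1$, giving $N = \{1\}$ and hence $\rho$ faithful.

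The argument is short, and I do not expect a genuine obstacle: the two ingredients — that a nontrivial normal subgroup of a $p$-group meets the center, and that characters of a finite abelian group separate points over a field with enough roots of unity — are both standard. The only point that deserves a sentence of care is the hypothesis on the base field, which the earlier reduction to the $p$-closure already supplies.
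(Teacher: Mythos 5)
Your proof is correct and takes essentially the same route as the Meyer--Reichstein argument that the paper cites: push the kernel into $\Omega_1(Z(H))$ via the standard fact that a nontrivial normal subgroup of a $p$-group meets the center, then use that the central characters, which vanish on the kernel, span $\widehat{\Omega_1}(Z(H))$ and hence separate points, forcing the kernel to be trivial. Your aside about needing $\operatorname{char} k \neq p$ (and a primitive $p$-th root of unity so that $\widehat{\Omega_1}(Z(H))$ actually separates points) correctly flags the one hypothesis left implicit in the statement.
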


\begin{note} For each of the groups $S \in \syl_p(G)$ in this paper, $\Omega_1(Z(S)) = Z(S)$, so we can ignore the $\Omega_1$ for the purposes of this paper.
\end{note}

Let $\F_{p^r}^+ \cong (\Z/p\Z)^r$ denote the additive group of $\F_{p^r}$.

\begin{definition} For $k$ containing a $p$-th root of unity, fix a nontrivial character $\psi$ of $\F_{p^r}^+ \to k$. For $b \in \F_{p^r}$, define $\psi_b(x) = \psi(bx)$. \end{definition}

\begin{remark}\label{psib} The map given by $b \mapsto \psi_b$ is an isomorphism between $\F_{p^r}^+$ and $\widehat{\F_{p^r}^+}$. \end{remark}

We will use boldface $\mbf{b}$ to denote elements in $(\F_{p^r})^m$ and $b_1, b_2, \dots, b_m \in \F_{p^r}$ to denote the components.

\begin{definition} Fix a nontrivial character $\psi$ of $\F_{p^r}^+ \to k$. Fix $m$. For $\mbf{b} = (b_j) \in (\F_{p^r}^+)^{m}$, define 
$$\psi_{\mbf{b}}(\mbf{d}) = \prod_j (\psi_{b_j}(d_j)) \in \widehat{(\F_{p^r}^+)^{m}} ,$$ where $b_j, d_j$ are the components of $\mbf{b},\mbf{d}$. \end{definition}

\begin{lemma}\label{charform} For $k$ containing a $p$-th root of unity, fix a nontrivial character $\psi$ of $\F_{p^r}^+ \to k$. Then $\mbf{b} \mapsto \psi_{\mbf{b}}$ gives an isomorphism $(\F_{p^r}^+)^m \cong \widehat{(\F_{p^r}^+)^m}$, and $\psi_\mbf{b}(\mbf{d}) = \psi(\mbf{b}\mbf{d}^T)$. \end{lemma}

\subsection{The Wigner-Mackey Little Group Method}\label{WigMac}

The following exposition of Wigner-Mackey Theory follows \cite{Se} Section 8.2 and is also reproduced in \cite{BMS} page 7: Let $G$ be a finite group such that we can write $G = \Delta \rtimes L$ with $\Delta$ abelian. Let $k$ be a field with $\text{char } k \nmid |G|$ such that all irreducible representations of $\Delta$ over $k$ have degree 1. Then the irreducible characters of $\Delta$ form a group $\widehat{\Delta} = \text{Hom}(\Delta,k^\times)$. The group $G$ acts on $\widehat{\Delta}$ by 
$$(\chi^g)(a) = \chi(gag^{-1}), \text{ for } g \in G, \chi \in \widehat{\Delta}, a \in \Delta.$$
Let $(\psi_s)_{\psi_s \in \widehat{\Delta}/L}$ be a system of representatives for the orbits of $L$ in $\widehat{\Delta}$. For each $\psi_s$, let $L_s$ be the subgroup of $L$ consisting of those elements such that $l\psi_s = \psi_s$, that is $L_s = \text{Stab}_L(\psi_s)$. Let $G_s = \Delta \cdot L_s$ be the corresponding subgroup of $G$. Extend $\psi_s$ to $G_s$ by setting
$$\psi_s(al) = \psi_s(a), \text{ for } a \in \Delta, l \in L_s.$$
Then since $l\psi_s = \psi_s$ for all $l \in L_s$, we see that $\psi_s$ is a one-dimensional representation of $G_s$. Now let $\lambda$ be an irreducible representation of $L_s$; by composing $\lambda$ with the canonical projection $G_s \to L_s$ we obtain an irreducible representation $\lambda$ of $G_s$, i.e 
$$\lambda(al) = \lambda(l), \text{ for } a \in \Delta, l \in L_s.$$  
Finally, by taking the tensor product of $\chi_s$ and $\lambda$, we obtain an irreducible representation $\psi_s \otimes \lambda$ of $G_s$.  Let $\theta_{s,\lambda}$ be the corresponding induced representation of $G$, i.e. $\theta_{s,\lambda} := \text{Ind}_{G_s}^G (\psi_s \otimes \lambda).$ The following is an extension of Proposition 25 in Chapter 8 of \cite{Se}, it is called ``Wigner-Mackey theory'' in \cite{BMS} (Theorem 4.2):

\begin{theorem}[Venkataraman \cite{GV}, Theorem 4.1; Serre (for $k = \C$) \cite{Se}, Proposition 25]\label{WM}  Under the above assumptions, 
\begin{enumerate}[(i)]
\item $\theta_{s,\lambda}$ is irreducible.
\item Every irreducible representation of $G$ is isomorphic to one of the $\theta_{s,\lambda}$.
\end{enumerate}\end{theorem}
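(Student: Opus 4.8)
The plan is to prove the two assertions separately: (i) by Mackey's irreducibility criterion, and (ii) by Clifford theory applied to the abelian normal subgroup $\Delta$. (Since the statement is verbatim \cite{Se}, Chapter 8, Proposition 25, one could also simply cite that; but here is the argument.)

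\medskip

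\noindent\textbf{Proof of (i).} Recall Mackey's criterion: for $H\le G$ and $W$ a representation of $H$, $\text{Ind}_H^G W$ is irreducible if and only if $W$ is irreducible and, for every $g\in G\setminus H$, the two representations $\text{Res}_{H\cap gHg^{-1}}W$ and $\text{Res}_{H\cap gHg^{-1}}({}^gW)$ have no common irreducible constituent. Apply this with $H=G_s$ and $W=\chi_s\otimes\overline\lambda$. First, $W$ is irreducible because $\chi_s$ is one-dimensional and $\overline\lambda$ is irreducible. For the disjointness condition, note that $\Delta\trianglelefteq G$ and $\Delta\subseteq G_s$, so $\Delta\subseteq G_s\cap gG_sg^{-1}$; hence it suffices to show that $\text{Res}_\Delta W$ and $\text{Res}_\Delta({}^gW)$ are disjoint as $\Delta$-representations. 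Since $\overline\lambda$ is trivial on $\Delta$, we have $\text{Res}_\Delta W\cong \chi_s^{\oplus\dim\lambda}$, and $\text{Res}_\Delta({}^gW)\cong (g\chi_s)^{\oplus\dim\lambda}$ where $(g\chi_s)(a)=\chi_s(g^{-1}ag)$. Writing $g=a_0l_0$ with $a_0\in\Delta$, $l_0\in L$, and using that $\Delta$ is abelian so conjugation by $a_0$ acts trivially on $\Delta$, we get $g\chi_s=l_0\chi_s$. If $g\notin G_s=\Delta\cdot L_s$ then $l_0\notin L_s$, so $l_0\chi_s\neq\chi_s$; thus the two $\Delta$-representations are built from distinct characters and are disjoint. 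By Mackey's criterion, $\theta_{s,\lambda}$ is irreducible.

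\medskip

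\noindent\textbf{Proof of (ii).} Let $(\rho,V)$ be an irreducible representation of $G$. Decompose the restriction $\text{Res}_\Delta V=\bigoplus_{\chi\in\widehat\Delta}V_\chi$ into $\Delta$-isotypic components. The group $G$ permutes these components via $\rho(g)V_\chi=V_{g\chi}$, and since $V$ is irreducible this action is transitive on $\{\chi: V_\chi\neq 0\}$. Because $\Delta$ acts trivially on $\widehat\Delta$, this set is a single $L$-orbit, hence it contains exactly one of the chosen representatives $\chi_s$. Put $W:=V_{\chi_s}$. Its stabilizer in $G$ is $\text{Stab}_G(\chi_s)=\Delta\cdot L_s=G_s$ (again, $g=a_0l_0$ fixes $\chi_s$ iff $l_0\in L_s$), so $W$ is a $G_s$-subrepresentation and $V=\bigoplus_{g\in G/G_s}\rho(g)W$, which is exactly the statement that $V\cong\text{Ind}_{G_s}^G W$. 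On $W$ the subgroup $\Delta$ acts by the scalar character $\chi_s$, so $W\otimes\chi_s^{-1}$ (with $\chi_s$ extended to $G_s$ as in the setup) has trivial $\Delta$-action and is therefore inflated from a representation $\lambda$ of $G_s/\Delta\cong L_s$; that is, $W\cong\chi_s\otimes\overline\lambda$. Since $\text{Ind}_{G_s}^G W\cong V$ is irreducible, $W$ is irreducible, hence so is $\overline\lambda$ and hence $\lambda$. Therefore $V\cong\text{Ind}_{G_s}^G(\chi_s\otimes\overline\lambda)=\theta_{s,\lambda}$, as claimed.

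\medskip

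\noindent The routine verifications are standard; the one point needing care is the bookkeeping of orbits versus cosets — specifically the identification $\text{Stab}_G(\chi_s)=G_s$ and the resulting isomorphism $V\cong\text{Ind}_{G_s}^G V_{\chi_s}$ in part (ii), together with citing Mackey's criterion in the precise form used in part (i). Neither step presents a genuine obstacle, which is why in the body of the paper this theorem is invoked by reference to \cite{Se} rather than reproved.
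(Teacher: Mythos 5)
Your proof is correct and is, in essence, Serre's own argument for Proposition 25, which is what the paper cites in lieu of a proof: part (i) via Mackey's irreducibility criterion applied on the abelian normal subgroup $\Delta$, and part (ii) via the decomposition of $\operatorname{Res}_\Delta V$ into $\Delta$-isotypic components, the identification $\operatorname{Stab}_G(\chi_s)=G_s$, and the recovery $V\cong\operatorname{Ind}_{G_s}^G(\chi_s\otimes\overline\lambda)$. Since the paper invokes \cite{Se} rather than reproving the result, there is nothing to compare beyond noting that your reconstruction matches the cited source.
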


Venkataraman also proves a uniqueness statement: If $\theta_{s,\lambda}$ and $\theta_{s',\lambda'}$ are isomorphic, then $\psi_s = \psi_s'$ and $\lambda$ is isomorphic to $\lambda'$. But we do not care about the uniqueness of the irreducible representations.  In what follows, we will consider characters $\psi_s$ with $\psi_s \in \widehat{\Delta}$ rather than $\psi_s \in \widehat{\Delta}/L$. The two points above still hold.

 Note that in the cases considered in this paper, the conditions hold so long as $\text{char } k \neq p$.  Since we are considering the Sylow $p$-subgroups, this takes care of the first condition that $\text{char }k \nmid |G|$. All of our Sylow $p$-subgroups have the form $\Delta \rtimes L$ with $\Delta \cong (\Z/p\Z)^N$ for some $N > 0$. By the note following Lemma \ref{primetopext}, we may assume that $k$ contains a primitive $p$-th root of unity.  Thus we can conclude that all irreducible representations of $\Delta$ over $k$ have degree $1$.

 The dimension is given by $\dim(\theta_{s,\lambda}) = \frac{|L|}{|L_s|}\dim(\lambda)$. If we pick $\lambda = 1$, then this will minimize the dimension of the representation and we will have $\dim(\theta_{s,1}) = \frac{|L|}{|L_s|}.$ So for our purposes, we will only consider when $\lambda = 1$. The dimension of the representation will be minimized when $|L_s|$ is maximized.

\section{\texorpdfstring{The Case of the Linear Groups}{The Case of the Linear Groups}}

In this section, we will prove that

\begin{theorem}[\cite{BMS} $p \neq 2$, K. $p = 2$] \label{edPSLnallp} For any prime $p$, $k$ a field such that $\text{char } k \neq p$,
$$\ed_k(PSL_n(\F_{p^r}),p) = \ed_k(GL_n(\F_{p^r}),p) = rp^{r(n-2)}.$$
\end{theorem}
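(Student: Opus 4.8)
The plan is to follow the three-step outline from the introduction. First, reduce to a $p$-Sylow computation: by Corollary~\ref{sylp}, $\ed_k(PSL_n(\F_{p^r}),p) = \ed_k(\syl_p(PSL_n(\F_{p^r})))$ and $\ed_k(GL_n(\F_{p^r}),p) = \ed_k(\syl_p(GL_n(\F_{p^r})))$. The $p$-Sylow of $GL_n(\F_{p^r})$ is the group $U$ of upper triangular unipotent matrices, and since $p \mid |Z(GL_n)| = |\F_{p^r}^\times|$ is false at the defining prime, the center of $GL_n$ contributes nothing $p$-adically; one checks that the composite $U \hookrightarrow SL_n \twoheadrightarrow PSL_n$ identifies $U$ with a $p$-Sylow of $PSL_n(\F_{p^r})$ as well (the center $Z \cap SL_n = \mu_n$ has order prime to $p$ since $p \nmid n$... more precisely $p \nmid |\mu_n \cap \F_{p^r}^\times|$), so the two essential $p$-dimensions coincide and we are reduced to computing $\ed_k(U)$ for $U = U_n(\F_{p^r})$ the full unipotent upper-triangular group. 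By Theorem~\ref{KM4.1} this is the minimal dimension of a faithful $k$-representation of $U$.

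\textbf{Step 1 (Sylow and its center).} Write $U = \Delta \rtimes L$ where $\Delta$ is the abelian normal subgroup consisting of matrices that differ from the identity only in the top row (entries $u_{1,2},\dots,u_{1,n}$), so $\Delta \cong (\F_{p^r}^+)^{n-1}$, and $L \cong U_{n-1}(\F_{p^r})$ sits in the lower-right $(n-1)\times(n-1)$ block. I would identify $Z(U)$: it is the one-parameter subgroup with the single nonzero off-diagonal entry $u_{1,n}$, so $Z(U) \cong \F_{p^r}^+ \cong (\Z/p\Z)^r$, an elementary abelian group of rank $r$. Hence by Lemma~\ref{BMKS3.5} a minimal faithful representation is a direct sum of exactly $r$ irreducibles $\rho_1 \oplus \cdots \oplus \rho_r$, each with nontrivial central character, whose restrictions to $Z(U)$ form a basis of $\widehat{Z(U)}$.

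\textbf{Step 2 (maximize the stabilizer).} Using Wigner--Mackey (Theorem~\ref{GVwigmac}), the irreducibles of $U$ arise as $\theta_{s,\lambda} = \Ind_{G_s}^U(\chi_s \otimes \overline\lambda)$ with $\chi_s \in \widehat\Delta$ and $L_s = \Stab_L(\chi_s)$, and as computed in the outline the minimal dimension of an irreducible lying over a given $\chi_s$ is $[L:L_s] = |L|/|L_s|$ (take $\lambda$ trivial). By Lemma~\ref{charform}, characters of $\Delta \cong (\F_{p^r}^+)^{n-1}$ are $\psi_{\mathbf b}$ for $\mathbf b = (b_2,\dots,b_n) \in \F_{p^r}^{n-1}$, and $\chi_s$ is nontrivial on $Z(U)$ precisely when $b_n \neq 0$. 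The action of $l \in L = U_{n-1}$ on $\mathbf b$ is linear, given essentially by $\mathbf b \mapsto \mathbf b \cdot l$ (thinking of $\mathbf b$ as a row vector indexed by $2,\dots,n$ and $l$ acting on the right); so I would compute $|\Stab_L(\psi_{\mathbf b})|$ for $\mathbf b$ with $b_n \neq 0$. The key linear-algebra fact is that when $b_n \neq 0$, the stabilizer in $U_{n-1}$ of such a row vector has index exactly $p^{r(n-2)}$ in $L$ (the orbit is the affine space of row vectors with the same last coordinate $b_n$, of size $(p^r)^{n-2}$), and this is independent of the nonzero value $b_n$. Thus every irreducible with nontrivial central character has dimension divisible by — and minimally equal to — $p^{r(n-2)}$.

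\textbf{Step 3 (bounds).} \emph{Lower bound:} each of the $r$ summands $\rho_i$ has nontrivial central character, hence by Step 2 has $\dim \rho_i \geq p^{r(n-2)}$, giving $\ed_k(U) = \dim\rho \geq r p^{r(n-2)}$. \emph{Upper bound:} choose $r$ characters of $Z(U) \cong \F_{p^r}^+$ forming a basis of $\widehat{Z(U)}$ — concretely, pick $b_n^{(1)},\dots,b_n^{(r)} \in \F_{p^r}^\times$ whose associated additive characters $\psi_{b_n^{(i)}}$ form an $\F_p$-basis of $\widehat{\F_{p^r}^+}$ (possible since $\F_{p^r}^\times$ spans $\F_{p^r}$ over $\F_p$) — extend each to a character $\chi_i$ of $\Delta$ (e.g. zero in the other coordinates), and form $\rho_i = \theta_{i,\mathrm{triv}}$ of dimension $p^{r(n-2)}$. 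Their direct sum has dimension $r p^{r(n-2)}$; by construction the central characters restrict to a spanning set of $\widehat{Z(U)} = \widehat{\Omega_1(Z(U))}$, so Lemma~\ref{BMKS3.4} gives faithfulness. Therefore $\ed_k(U) = r p^{r(n-2)}$, completing the proof.

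I expect the main obstacle to be Step 2: precisely pinning down the $L$-action on $\widehat\Delta$ and proving that the stabilizer of a character nontrivial on the center has index exactly $p^{r(n-2)}$, uniformly in the nonzero top-corner parameter. This requires being careful about how conjugation by $U_{n-1}$ moves the entries $u_{1,2},\dots,u_{1,n}$ of $\Delta$ (it is a nontrivial linear action, essentially the standard representation of $U_{n-1}$ on $\F_{p^r}^{n-1}$ composed with duality), and about the claim that the resulting orbit of any such character is a full affine $(n-2)$-dimensional $\F_{p^r}$-space. A secondary subtlety is the reduction lemma identifying $\syl_p(PSL_n)$ with $\syl_p(GL_n)$ and justifying the equality $\ed_k(PSL_n(\F_{p^r}),p) = \ed_k(GL_n(\F_{p^r}))$ (note the right side has no ``$,p$'' — one is implicitly using that for the relevant $p$-group, essential dimension and essential $p$-dimension agree by Theorem~\ref{KM4.1}).
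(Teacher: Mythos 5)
Your proposal is correct, and it takes a genuinely different route from the paper's. The paper first identifies the classical Heisenberg subgroup $H_n(\F_{p^r}) \subset \Up_n(\F_{p^r})$, computes $\ed_k(H_n) = rp^{r(n-2)}$ by Wigner--Mackey applied to $H_n = \Delta' \rtimes L'$ (with $\Delta'$ the first row and top-right corner, $L'$ the last column), and then proves $\ed_k(\Up_n) = \ed_k(H_n)$ by a separate extension argument: $\Up_n \cong H_n \rtimes \Up_{n-2}$, and the minimal representation of $H_n$ extends to $\Up_n$ via induction from $\tilde\Delta = \Delta' \rtimes \Up_{n-2}$ (Proposition~\ref{HrtimesSp} and Theorem~\ref{weilvar}). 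You instead apply Wigner--Mackey directly to $\Up_n = \Delta \rtimes L$ with $\Delta$ the first-row subgroup $(\F_{p^r}^+)^{n-1}$ and $L = \Up_{n-1}$ in the lower-right block. This bypasses the Heisenberg detour and the extension step entirely, and is in fact more parallel to how the paper handles the symplectic, orthogonal, and unitary cases (where Wigner--Mackey is applied directly to the full $p$-Sylow). The price is the orbit computation you flag in Step 2, which does go through: the conjugation action on $\widehat\Delta$ is $\mathbf b \mapsto \mathbf b G^T$ for $G \in \Up_{n-1}$, and when the last coordinate $b_n$ of $\mathbf b$ is nonzero, the $j$-th entry $b_j + \sum_{k>j} b_k G_{j,k}$ can be forced to any value by varying $G_{j,n-1}$, so the orbit is exactly the affine hyperplane $\{b_n = \text{const}\}$ of size $(p^r)^{n-2}$ and $[L:L_s] = p^{r(n-2)}$, independent of the remaining $b_j$ and of the nonzero value $b_n$.

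Two small remarks. First, your worry about $p \nmid n$ is unnecessary: the kernel of $SL_n(\F_{p^r}) \to PSL_n(\F_{p^r})$ is a subgroup of the scalar matrices $\F_{p^r}^\times$, whose order $p^r - 1$ is prime to $p$ regardless of $n$; the paper's order count $|\syl_p(PSL_n)| = (p^r)^{n(n-1)/2} = |\syl_p(GL_n)|$ settles the identification of $p$-Sylows directly. Second, for the upper bound you want the central character of $\theta_{s,\lambda}$ to be $\chi_s|_{Z(\Up_n)}$; this holds because $Z(\Up_n) \subset \Delta$ is central, so all conjugates of $\chi_s$ agree on it and the induced representation restricts to a multiple of $\psi_{b_n^{(i)}}$, exactly as needed for Lemma~\ref{BMKS3.4}.
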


\noindent In this case, we will actually identify a subgroup (the Heisenberg subgroup) of a Sylow $p$-subgroup, to which Wigner-Mackey theory can be applied. This will give a lower bound for the essential $p$-dimension.  We will find an upper bound by constructing a specific faithful representation (we will extend the minimal dimensional representation of the Heisenberg subgroup to a representation of the same dimension).

\subsection{\texorpdfstring{The Sylow $p$-subgroups and their centers}{The Sylow p-subgroups and their centers}}

\begin{definition} Define $\Up_{n}(\F_{p^r})$ to be the unitriangular $n \times n$ matrices over $\F_{p^r}$ under multiplciation. (Unitriangular matrices are upper triangular matrices with $1$'s on the diagonal).\end{definition}

\noindent The kernel of the natural homomorphism $GL_n(\F_{p^r}) \to PSL_n(\F_{p^r})$ has order prime to p, so it maps the Sylow $p$-subgroups of $GL_n(\F_{p^r})$ isomorphically onto Sylow $p$-subgroups of $PSL_n(\F_{p^r})$, so it suffices to consider the Sylow $p$-subgroups of $GL_n(\F_{p^r})$. It is straightforward to show the following two lemmas.

\begin{lemma}\label{sylpn} For all $n \geq 2$ and all primes $p$, we have  $\Up_n(\F_{p^r}) \in\syl_p(GL_n(\F_{p^r}))$.\end{lemma}

\begin{lemma}\label{Zsylpn} For all $n \geq 2$ and all primes $p$, we have $$Z(\Up_n(\F_{p^r})) =  \{\begin{pmatrix} 1 & 0 & \ldots & 0 & b \\
0 & 1 & 0 & \ldots & 0\\
& & \ddots & & \vdots\\
0 & 0 & \ldots & 1 & 0\\
0 & 0 & 0 & \ldots & 1\end{pmatrix}\} \cong \F_{p^r}^+ \cong (\Z/p\Z)^r$$ \end{lemma}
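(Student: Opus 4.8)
The plan is a short direct computation with matrix units. Write a general element of $\Up_n(\F_{p^r})$ as $I + N$ with $N$ strictly upper triangular, and let $e_{ij}$ denote the matrix unit with a $1$ in position $(i,j)$ and zeros elsewhere (assume $n \ge 2$, the case $n=1$ being trivial). First I would verify the easy inclusion $\supseteq$: for $g = I + c\,e_{1,n}$ with $c \in \F_{p^r}$ one has $e_{1,n}N = 0$ (the last row of any strictly upper triangular $N$ vanishes) and $N e_{1,n}=0$ (the first column of $N$ vanishes), so $g(I+N) = I + N + c\,e_{1,n} = (I+N)g$ for every such $N$, and hence $g$ lies in the center.

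For the reverse inclusion $\subseteq$ I would use that $\Up_n(\F_{p^r})$ is generated by the elementary matrices $E_{k,k+1}(a) = I + a\,e_{k,k+1}$ for $1 \le k \le n-1$ and $a \in \F_{p^r}$, so a central $g = I+N$ must commute with all of these. Expanding $g\,E_{k,k+1}(a)$ and $E_{k,k+1}(a)\,g$ and using the identities $N e_{k,k+1} = \sum_{i<k} N_{ik}\,e_{i,k+1}$ and $e_{k,k+1} N = \sum_{j>k+1} N_{k+1,j}\,e_{k,j}$, commutativity for all $a$ becomes, for each $k$,
$$\sum_{i<k} N_{ik}\,e_{i,k+1} \;=\; \sum_{j>k+1} N_{k+1,j}\,e_{k,j}.$$
On the left-hand side the matrix units have second index $k+1$ and first index $<k$, while on the right-hand side they have first index $k$; the two sides therefore involve disjoint matrix units, so each sum must vanish separately. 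Letting $k$ run over $1,\dots,n-1$, the left-hand vanishing gives $N_{ij}=0$ whenever $j \le n-1$, and the right-hand vanishing gives $N_{ij}=0$ whenever $i \ge 2$; hence $N_{ij}=0$ for every $(i,j)$ with $i<j$ except possibly $(1,n)$, so $g = I + c\,e_{1,n}$ for some $c \in \F_{p^r}$.

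For the final identification, since $e_{1,n}^2 = 0$ (as $1 \neq n$) the map $c \mapsto I + c\,e_{1,n}$ is a group isomorphism $\F_{p^r}^+ \to Z(\Up_n(\F_{p^r}))$, and $\F_{p^r}^+ \cong (\Z/p\Z)^r$ because $\F_{p^r}$ is an $r$-dimensional $\F_p$-vector space, which completes the proof. The entire argument is routine; the only point that needs a moment's care is recognizing that the two families of matrix-unit identities displayed above are supported on disjoint matrix units, so that both sides must vanish rather than merely cancel against each other.
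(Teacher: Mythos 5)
Your proof is correct and takes essentially the same approach as the paper: both arguments verify centrality by testing commutativity against upper-triangular matrices with a single nonzero off-diagonal entry and comparing products. The only (minor) refinement in yours is that you restrict to the superdiagonal generators $E_{k,k+1}(a)$ rather than allowing an arbitrary position $(i_0,j_0)$, and you make explicit the observation that the two families of matrix units in the commutation identity are supported disjointly, so both sums vanish; this is tidy but not a structurally different argument.
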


\begin{lemma} $\Up_n(\F_{p^r}) \cong (\F_{p^r}^+)^ {n-1} \rtimes \Up_{n-1}$, where the action of $\Up_{n-1}$ on $(\F_{p^r}^+)^{n-1}$ is given by $A(\mbf{b}) = (A\mbf{b}^T)^T$. \end{lemma}

\begin{proof}
Let $$N = \{\begin{pmatrix} 1 & 0 & \ldots & 0 & b_{1} \\
0 & 1 & 0 & \ldots & b_2\\
& & \ddots & & \vdots\\
0 & 0 & \ldots & 1 & b_{n-1}\\
0 & 0 & 0 & \ldots & 1\end{pmatrix}\}$$ and let $$H =\{\begin{pmatrix} A & \mbf{0}\\
\mbf{0} & 1\end{pmatrix}, \text{ with } A \in \Up_{n-1}(\F_{p^r})\}.$$

Then $\Up_n(\F_{p^r}) \cong N \rtimes H$, where the action of $L$ on $N$ is given by 
$\begin{pmatrix} A & \mbf{0}\\
\mbf{0} & 1\end{pmatrix}\left(\begin{pmatrix} \text{Id} & \mbf{b}^T\\
\mbf{0} & 1\end{pmatrix} \right) = \begin{pmatrix} \text{Id} & A\mbf{b}^T\\
\mbf{0} & 1\end{pmatrix}$.
\end{proof}

Note that the center is given by $\{(\mbf{b},\text{Id})\}$, where $\mbf{b} = (b, 0, \cdots, 0)$.

\subsection{Classifying the irreducible representations}

By Corollary \ref{rootofunity}, we may assume that our field $k$ contains $p$-th roots of unity. We will use Wigner-Mackey Theory with $\Up_n(\F_{p^r}) \cong (\F_{p^r}^+)^{n-1} \rtimes \Up_{n-1}(\F_{p^r})$ to see what is the minimum dimension of an irreducible representation with non-trivial central character.  So we have 
$$\Delta = (\F_{p^r}^+)^{n-1}, \qquad L = \Up_{n-1}(\F_{p^r}).$$

%Define the isomorphism $\{\begin{pmatrix} \text{Id} & \mbf{b} \\
%\mbf{0} & 1 \end{pmatrix}\} \to (\F_{p^r}^+)^{n-1}$ by $\begin{pmatrix} \text{Id} & \mbf{b}\\
%\mbf{0} & 1\end{pmatrix} \mapsto \mbf{b}$.

Fix $\psi$ a non-trivial character of $\F_{p^r}^+$. By Lemma \ref{charform}, there is an isomorphism between $(\F_{p^r}^+)^{n-1}$ and $\widehat{(\F_{p^r}^+)^{n-1}}$ given by sending $\mbf{b} \in (\F_{p^r}^+)^{n-1}$ to the character $\psi_\mbf{b}$ defined by $\psi_{\mbf{b}}(\mbf{d}) = \psi(\mbf{b}\mbf{d}^T)$. A straightforward computation shows that for any prime $p$, the characters extending a non-trivial central character are $\psi_{\mbf{b}}$ with $b_1 \neq 0$. A straightforward calculation shows that $H \in L_\mbf{b}$ if and only if $\psi((\mbf{b}H - \mbf{b})\mbf{d}^T) = 1$ for all $\mbf{d} \in (\F_{p^r}^+)^{n-1}.$ (See the appendix \ref{4.2calc} for the details of this calculation.) Thus $H \in L_\mbf{b}$ if and only if $\mbf{b}H = \mbf{b}$.

Recall that $\theta_{\mbf{b},\lambda}$ refers to $\text{Ind}_{G_s}^G (\psi_s \otimes \lambda)$ and that in order to minimize the dimension, we are assuming $\lambda = 1$, in which case $\dim(\theta_{\mbf{b},1}) = \frac{|L|}{|L_{\mbf{b}}|}$, where $L_{\mbf{b}}$ is the stabilizer of $\psi_\mbf{b}$ in $L$.

\begin{proposition}\label{LsPSL} For all $p$, $\mbf{b} \in (\F_{p^r}^+)^{n-1}$ with $b_1 \neq 0$,

$$\dim(\theta_{\mbf{b},1}) = p^{r(n-2)}.$$
\end{proposition}

\begin{proof}

Let $b_1 \neq 0$. Note that $\mbf{b}H = 0$ if and only $\frac{1}{b_1}\mbf{b}H = 0$; thus we may assume without loss of generality that $b_1 = 1$. If $\mbf{b} = (1, 0, ,..., 0)$, then we have 
$$L_\mbf{b} = \{H \in \Up_{n-1}(\F_p) : H_{1,j} = 0 \text{ } \forall j \neq 1\} \cong \Up_{n-2}(\F_p).$$

Furthermore, for any $\mbf{b}'$ with $b_1' = 1 $, there exists $C \in \Up_{n-1}(\F_{p^r})$ such that  $\mbf{b}' = \mbf{b}C$ (namely the matrix $C$ such that $C_{1,j} = b_{j}$). Then $L_\mbf{b}$ and $L_{\mbf{b}'}$ are conjugate by $C$ and hence are isomorphic. Thus $L_{\mbf{b'}} \cong L_\mbf{b} \cong \Up_{n-2}(\F_{p^r})$.

%Note that
%$$(H\mbf{d}^T)^T - \mbf{d})_i = (\sum_{j={i+1}}^{n-1} H_{i,j}d_j)$$
%So 
%$$\mbf{b} \cdot ((H\mbf{d}^T)^T - \mbf{d}) = \sum_{i=1}^{n-1} (\sum_{j={i+1}}^{n-1}b_iH_{i,j}d_j) = \sum_{j=2}^{n-1} (\sum_{i=1}^{j-1}b_iH_{i,j}d_j)$$
%Pick $j_0 \neq 1$ and choose $\mbf{d}$ with $d_j = 0$ except for $d_{j_0}$.  Then 
%$$\mbf{b} \cdot ((H\mbf{d}^T)^T - \mbf{d}) = d_{j_0}\sum_{i=1}^{j_0-1} b_iH_{i,j_0}.$$
%So since we need $\psi(\mbf{b} \cdot ((H\mbf{d}^T)^T - \mbf{d})) = 1$ for all choices of $\mbf{d}$, we can conclude that $0 = \sum_{i=1}^{j_0-1} b_{i}H_{i,j_0}$, and since $b_1 \neq 0$, we can solve for $H_{1,j_0}$:
%$$H_{1,j_0} = \frac{-1}{b_1}\sum_{i=2}^{j_0-1} b_{i}H_{i,j_0}.$$
%And given $H \in \Up_{n-1}(\F_{p^r})$ satisfying the above equations, $\mbf{b} \cdot ((H\mbf{d}^T)^T - \mbf{d}) = \sum_{j=2}^{n-1} d_j(\sum_{i=1}^{j-1}b_iH_{i,j}) = 0$. So $H$ will be in the stabilizer. 

%Thus
%$$|L_{\mbf{b}}| = |\{H : H_{1,j} \text{ fixed } \forall j \neq 1\}| = |\Up_{n-2}(\F_{p^r})|.$$
And hence 
$$\dim(\theta_{\mbf{b},1}) = \frac{|L|}{|L_{\mbf{b}}|} = \frac{|\Up_{n-1}(\F_{p^r})|}{|\Up_{n-2}(\F_{p^r})|} = p^{r(n-2)}.$$

\end{proof}

\subsection{Proof of Theorem \ref{edPSLnallp}}

\begin{proof}
\text{}  By Corollary \ref{rootofunity}, we may assume that our field $k$ contains $p$-th roots of unity. So by Lemma \ref{BMKS3.5}, faithful representations of $S(p,n)$ of minimal dimension will decompose as a direct sum of exactly $r = \text{rank}(Z(S(p,n)))$ irreducible representations with non-trivial central characters. Since the center of $\Up_n(\F_{p^r})$ has rank $r$ and the dimension of any irreducible representation with non-trivial central character is $p^{r(n-2)}$,
$$\ed_k(\Up_n(\F_{p^r}),p) = rp^{r(n-2)}.$$

\end{proof}

\section{The Case of the Symplectic Groups}
In this section, we will show that
\begin{theorem}\label{edPSp}  For $k$ a field such that $\text{char } k \neq p$,
$$\ed_k(PSp(2n,p^r),p) = \ed_k(Sp(2n,p^r),p) = \begin{cases} rp^{r(n-1)}, &p \neq 2 \text{ or } n =2\\
r2^{r(n-1)-1}(2^{r(n-2)} + 1), &p = 2, n > 2 \end{cases}$$
\end{theorem}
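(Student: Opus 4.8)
### Proof Proposal for Theorem \ref{edPSp}

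The plan is to follow the three-step outline from the introduction applied to the symplectic case. First I would identify the $p$-Sylow subgroup $P = \syl_p(Sp(2n,p^r))$. Since $Sp(2n,p^r)$ is a group of Lie type with root system $C_n$, its $p$-Sylow is the group of upper unitriangular matrices preserving the symplectic form, and its order is $p^{r n^2}$. As in the $PSL_n$ case, one checks $|\syl_p(PSp(2n,p^r)')| = |\syl_p(Sp(2n,p^r))|$, so these $p$-Sylows are isomorphic and it suffices to compute $\ed_k(P)$. I would then compute the center $Z(P)$: it should be the root subgroup corresponding to the highest (long) root, giving $Z(P) \cong (\Z/p\Z)^r$ when $p \neq 2$, but for $p = 2$ an extra short-root contribution appears (the squaring map interacts with the form), enlarging $Z(P)$ to rank $2r$ when $n > 2$ — this is the source of the case split in the statement. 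I would verify this by direct matrix computation, exhibiting explicit generators of $Z(P)$ in each case.

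Next, following Step 1–2, I would write $P = \Delta \rtimes L$ with $\Delta$ abelian. The natural choice is to take $\Delta$ to be an abelian normal subgroup containing $Z(P)$ — for instance the subgroup coming from the Heisenberg-type block attached to the highest root together with the commuting root groups — and $L$ a complement (itself a smaller unitriangular-type group, built from $Sp(2(n-1),p^r)$'s Sylow). Using Lemma \ref{charform}, every character of $\Delta$ is $\psi_{\mbf b}$ for $\mbf b$ in the appropriate $\F_{p^r}$-vector space, and I would determine which $\mbf b$ give characters restricting nontrivially to $Z(P)$. For each such $\chi_s$, I would compute $L_s = \mathrm{Stab}_L(\chi_s)$ by solving the linear/bilinear conditions analogous to Proposition \ref{Lsprop1}, and maximize $|L_s|$ over the relevant characters. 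The dimension $|L|/|L_s|$ of the resulting minimal irreducible, multiplied appropriately over a basis of $\widehat{Z(P)}$, should produce the formula: for $p$ odd (or $p=2,n=2$) the center has rank $r$ and each required irreducible has dimension $p^{r(n-1)}$, giving $rp^{r(n-1)}$; for $p=2, n>2$ the center has rank $2r$, and the basis of $\widehat{Z(P)}$ splits into $r$ characters giving irreducibles of one dimension and $r$ giving another, whose total is $r2^{r(n-1)-1}(2^{r(n-2)}+1)$.

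For Step 3, the lower bound follows from Lemma \ref{BMKS3.5} and Theorem \ref{KM4.1}: any minimal faithful representation decomposes into $\mathrm{rank}(Z(P))$ irreducibles whose central characters form a basis of $\widehat{Z(P)}$, and the classification of $L_s$ from Step 2 forces each summand to have at least the stated dimension (here one must be careful in the $p=2$ case that the characters nontrivial on the "short-root part" of $Z(P)$ genuinely require the larger dimension, while a suitable choice of basis lets some summands be smaller). For the upper bound I would construct an explicit faithful representation: extend a basis of $\widehat{Z(P)}$ to characters $\chi_s$ of $\Delta$, induce up $\chi_s \otimes \overline{\lambda}$ with $\lambda$ trivial via Wigner–Mackey (Theorem \ref{GVwigmac}), take the direct sum, and invoke Lemma \ref{BMKS3.4} to check faithfulness via the center. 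Finally I would note $\ed_k(Sp(2n,p^r),p) = \ed_k(PSp(2n,p^r)',p)$ since their $p$-Sylows agree.

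The main obstacle I expect is the $p=2$ analysis of $Z(P)$ and the corresponding character bookkeeping. In characteristic $2$ the symmetric and alternating parts of the form are entangled, so the center of the Sylow is genuinely larger than the naive "highest root group," and one must carefully track how $L$ acts on the two pieces of $\widehat{Z(P)}$ — the stabilizers $L_s$ differ between the two types of central character, which is exactly why the answer is an asymmetric sum $2^{r(n-1)-1}(2^{r(n-2)}+1)$ rather than a clean power. Getting the optimal basis of $\widehat{Z(P)}$ (mixing the two types so as to minimize the total dimension) and proving no better basis exists is the delicate point; everything else is a routine, if lengthy, matrix computation parallel to the $PSL_n$ case already carried out in the excerpt.
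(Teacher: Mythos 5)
Your high-level strategy (Karpenko--Merkurjev, Meyer--Reichstein, then Wigner--Mackey on $\Delta\rtimes L$) is the same as the paper's, but there is a concrete error in your analysis of the center that would make the $p=2,n=2$ lower bound fail. You claim the center of $\syl_2$ enlarges to rank $2r$ only ``when $n>2$'' and hence attribute the case split in the final formula to a change in $\mathrm{rank}\,Z(P)$. In fact the paper's Lemma~\ref{ADAPSP2n} and Proposition~\ref{ZPSp'} show that for $p=2$ the center is $\cong(\Z/2\Z)^{2r}$ for \emph{all} $n\ge 2$ (the base case $n=2$ already has the $(1,1),(1,2),(2,1)$ entries free, hence rank $2r$). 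So for $p=2,n=2$ the minimal faithful representation decomposes into $2r$ irreducibles, not $r$, and by Corollary~\ref{Ls5cor} the relevant minimal irreducibles have dimension $2^{r-1}$, not $2^r$ as you assert. The total $2r\cdot 2^{r-1}=r2^r$ happens to coincide numerically with your $r\cdot 2^r$, but your lower-bound argument (rank $r$ times minimum nontrivial central-character dimension) would only give $r\cdot 2^{r-1}$, which is half of what is needed. The real source of the $n=2$ versus $n>2$ split is that for $n=2$ \emph{all} basis characters of $\widehat{Z}$ can be realized by irreducibles of the same minimal dimension $2^{r-1}$, whereas for $n>2$ any basis of $\widehat{Z}$ must include $r$ characters with $b_2\neq 0$ which force dimension $2^{r(2n-3)-1}$ and $r$ others that may have $b_2=0$ with dimension $2^{r(n-1)-1}$; the center has rank $2r$ throughout.

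A secondary issue is your proposed decomposition $P=\Delta\rtimes L$. The paper takes $\Delta\cong\sym(n,p^r)$ (the abelian unipotent radical of the Siegel parabolic, additive group of $n\times n$ symmetric matrices) and $L\cong\Up_n(\F_{p^r})$, the Sylow of $GL_n(\F_{p^r})$, acting by $A(B)=ABA^T$. Your description of $\Delta$ as a ``Heisenberg-type block'' is misleading since a Heisenberg group is nonabelian and Wigner--Mackey requires $\Delta$ abelian, and your claim that $L$ is ``built from $Sp(2(n-1),p^r)$'s Sylow'' is incorrect on order grounds: $|\Up_n(\F_{p^r})|=p^{rn(n-1)/2}$ while $|\syl_p(Sp(2(n-1),p^r))|=p^{r(n-1)^2}$. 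You need to pin down the correct abelian $\Delta$ before the orbit/stabilizer computation can proceed.
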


 We do not prove the case $p = 2, n = 2, r = 1$, since it is already known that $\ed_k(PSp(4,2)',2) = \ed_k(A_6,2) = 2$. In any other case, $PSp(2n,p^r)' = PSp(2n,p^r)$, so we obtain a complete calculation of $\ed_k(PSp(2n,p^r)',p)$.

\subsection{Definitions}

\begin{definition} Let $S = \begin{pmatrix} 0 & \text{Id}_n\\
-\text{Id}_n & 0\end{pmatrix}$. The symplectic groups are defined by 
$$Sp(2n,p^r) := \{M \in GL_{2n}(\F_{p^r}) : M^T S M = S\},$$ and the projective symplectic groups are defined by 
$$PSp(2n,p^r) := Sp(2n,p^r)/Z(Sp(2n,p^r)).$$ \end{definition}

\noindent Note: A matrix $M = \begin{pmatrix} A & B\\
C & D \end{pmatrix} \in GL_{2n}(\F_{p^r})$ is symplectic if and only if $A^TC$, $B^TD$ are symmetric and  $A^TD - C^TB = \text{Id}_n$. 

\subsection{\texorpdfstring{The Sylow $p$-subgroups and their centers}{The Sylow p-subgroups and their centers}}

The kernel of the natural homomorphism  $Sp(2n,p^r) \to PSp(2n,p^r)$ has order prime to $p$, so it maps the Sylow $p$-subgroups of $Sp(2n,p^r)$ isomorphically onto Sylow $p$-subgroups of $PSp(2n,p^r)$, so it suffices to consider the Sylow $p$-subgroups of $Sp(2n,p^r)$.

\begin{definition} For any prime $p$, define $\sym(n,p^r)$ as the group of $n \times n$ symmetric matrices under addition (with entries from $\F_{p^r}$). \end{definition}

It is straightforward to show the following results. 
\begin{lemma}\label{sylPSp}[See \cite{Pre}, Lemma 1] For any prime $p$, let $$S(p,n) = \{\begin{pmatrix} A & 0_n\\
0_n & (A^{-1})^T\end{pmatrix}\begin{pmatrix} \text{Id}_n & B\\
0_n & \text{Id}_n\end{pmatrix} : A \in \Up_{n}(\F_{p^r}), B \in \sym(n,p^r)\}.$$ Then $S(p,n) \in \syl_p(Sp(2n,p^r))$. \end{lemma}

\begin{corollary}\label{Cor5.5}[See \cite{Pre95}] For any prime $p$, $S(p,n)$ the Sylow $p$-subgroup of $Sp(2n,p^r)$ defined in Lemma \ref{sylPSp}, 
$$S(p,n) \cong \sym(n,p^r) \rtimes \Up_{n}(\F_{p^r}),$$
where the action is given by $A(B) = ABA^T,$ where $B \in \sym(n,p^r), A \in \Up_n(\F_{p^r})$.
\end{corollary}

\begin{lemma}\label{Lem5.6}For $p \neq 2$, $S(p,n)$ the Sylow $p$-subgroup of $Sp(2n,p^r)$ defined in Lemma \ref{sylPSp}, $$Z(S(p,n)) = \{\begin{pmatrix} \text{Id}_n & D\\
0_n & \text{Id}_n\end{pmatrix} : D  = \begin{pmatrix} d & \mbf{0}\\
\mbf{0} & 0_{n-1} \end{pmatrix}\} \cong \F_{p^r}^+ \cong (\Z/p\Z)^r$$
\end{lemma}

\begin{lemma}\label{Lem5.7} For $S(2,n)$ the Sylow $p$-subgroup of $Sp(2n,2^r)$ defined in Lemma \ref{sylPSp}, \begin{align*}
Z(S(2,n)) &= \{\begin{pmatrix} \text{Id}_n & D\\
0_n & \text{Id}_n\end{pmatrix} : D_{i,j} = 0, \text{ for all } (i,j) \notin \{(1,1),(1,2),(2,1), D_{1,2} = D_{2,1}\} \cong (\F_{2^r}^+)^2 \cong (\Z/2\Z)^{2r}
\end{align*}
\end{lemma}
See the appendix (\ref{Ap5.67}) for the calculations of the centers.

\subsection{Classifying the irreducible representations}

By Corollary \ref{rootofunity}, we may assume that our field $k$ contains $p$-th roots of unity. We will use Wigner-Mackey Theory with $S(p,n) \cong \sym(n,p^r) \rtimes \Up_n(\F_{p^r})$ to compute the minimum dimension of an irreducible representation with non-trivial central character. So
$$\Delta = \sym(n,p^r), L =\Up_n(\F_{p^r}).$$ 
For $$B = \begin{pmatrix} b_1 & b_2 & \ldots & & b_{n}\\
b_2 & b_{n+1} & \ldots & & b_{2n-1}\\
\vdots & &\ddots & & \vdots\\
b_{n-1} & \ldots & & b_{n(n+1)/2-2} & b_{n(n+1)/2-1}\\
b_{n} & \ldots & & b_{n(n+1)/2-1} & b_{n(n+1)/2}\end{pmatrix} \in \sym(n,p^r),$$
let $\mbf{b} = (b_1,\ldots, b_{n(n+1)/2})$. Then the map map $B \mapsto \mbf{b}$ gives an isomorphism $\sym(n,p^r) \cong (\F_{p^r}^+)^{n(n+1)/2}$.

Fix $\psi$ a non-trivial character of $\F_{p^r}^+$. By Lemma \ref{charform}, there is an isomorphism between $(\F_{p^r}^+)^{n(n+1)/2}$ and $\widehat{(\F_{p^r}^+)^{n(n+1)/2}}$ given by sending $\mbf{b} \in (\F_{p^r}^+)^{n(n+1)/2}$ to the character $\psi_\mbf{b}$ defined by $\psi_{\mbf{b}}(\mbf{d}) = \psi(\mbf{b} \cdot \mbf{d})$. A straightforward computation shows that for $p \neq 2$, the characters extending a non-trivial central character are $\psi_{\mbf{b}}$ with $b_1 \neq 0$. Similarly, a straighforward computation shows that for $p = 2$, the characters extending a non-trivial central character are $\psi_{\mbf{b}}$ with $(b_1,b_2) \neq (0,0)$, that is $b_1 \neq 0$ or $b_2 \neq 0$.  Note that $H \in L_{\mbf{b}}$ if and only if $\psi(\mbf{b} \cdot (\mbf{hdh^T} - \mbf{d})) = 1$ for all $\mbf{d} \in (\F_{p^r})^{n(n+1)/2},$ where $\mbf{hdh^T}$ is the vector corresponding to $HDH^T$ under the isomorphism $\sym(n,p^r) \cong (\F_{p^r}^+)^{n(n+1)/2}$. See the appendix \ref{5.3calc} for the full details of the computation.

\subsubsection{\texorpdfstring{The case $p \neq 2$}{The case p neq 2}}

\begin{proposition}\label{Ls3} For $p \neq 2$,
$$\min_{\mbf{b} \in (\F_{p^r}^+)^{n(n+1)/2}, \text{ } b_1 \neq 0}\dim(\theta_{\mbf{b},1}) = p^{r(n-1)}.$$ This minimum is achieved when $\mbf{b} = (b,0,\ldots,0)$ with $b \neq 0$.\end{proposition}

\begin{proof}

Recall that $\mbf{b}, \mbf{d}$ are vectors corresponding to matrices $B, D \in \sym(n,p^r)$ via the isomorphism defined above for $\sym(n,p^r) \cong (\F_{p^r}^+)^{n(n+1)/2}$ and $\mbf{hdh^T}$ is the vector in $(\F_{p^r}^+)^{n(n+1)/2}$ corresponding to $HDH^T \in \sym(n,p^r)$ under the isomorphism $\sym(n,p^r) \cong (\F_{p^r}^+)^{n(n+1)/2}$.

We prove this proposition by showing that for $\mbf{b} = (b_1, \cdots, b_{n(n+1)/2})$ with $b_1 \neq 0$, $|L_{\mbf{b}}| \leq |\Up_{n-1}(\F_{p^r})| = p^{r(n-1)(n-2)/2}$. Pick $j_0 \neq 1$ and choose $D$ with $d_{i,j} = 0$ except for $d_{1,j_0}$ and let $\mbf{d}$ be the corresponding vector. Then
$$\mbf{b} \cdot (\mbf{hdh^T} - \mbf{d})  = d_{1,j_0}\left(2h_{1,j_0}B_{1,1} + \sum_{i=2}^{j_0-1} h_{i,j_0}B_{1,i}\right).$$ 
So since we need $\psi(\mbf{b} \cdot (\mbf{h}\mbf{d}\mbf{h}^T - \mbf{d})) = 1$ for all choices of $\mbf{d}$, we can conclude that
$$h_{1,j_0} = \frac{-1}{2B_{1,1}} \sum_{i=2}^{j_0-1} h_{i,j_0}B_{1,i}.$$
So
$$|L_{\mbf{b}}| \leq |\{H : H_{1,j} \text{ fixed } \forall j \neq 1\}| = |\Up_{n-1}(\F_{p^r})| = p^{r(n-1)(n-2)/2}$$
It is straightforward to show that for $\mbf{b} = (b,0,\ldots,0)$,
$$L_{\mbf{b}} = \{(0_n,H^{-1}) : H_{1,j} = 0, \forall j \neq 1\} \cong \Up_{n-2}(\F_{p^r}).$$
Thus the minimum is achieved when $\mbf{b} = (b,0,\ldots,0).$

\end{proof}

\subsubsection{\texorpdfstring{The case $p = 2$}{The case p = 2}}

\textbf{Case 1: } $\mbf{n = 2}$

\begin{proposition}\label{Ls5} For $p = 2$, $n = 2$, 
$$\min_{\mbf{b} \in (\F_{p^r}^+)^{3}, \text{ } b_1 \neq 0, b_2\neq 0}\dim(\theta_{\mbf{b},1})  = 2^{r-1}.$$ 
This minimum is achieved when $\mbf{b} = (b_1,b_2,0)$ with $b_1 \neq 0, b_2 \neq 0$.\\
If $\mbf{b} = (b_1, b_2, 0)$ with $b_1 \neq 0, b_2 \neq 0$, then 
$$\dim(\theta_{\mbf{b},1})  = 2^r.$$\end{proposition}

\begin{proof}
The proof is similar to that for $p \neq 2$.  We refer the reader to the appendix (\ref{Ap5.9}) for full details. \end{proof}

\noindent \textbf{Case 2: } $\mbf{n > 2}$

\begin{proposition}\label{Ls2} For $p = 2$, $n > 2$,
$$\min_{\mbf{b} \in (\F_{p^r}^+)^{n(n+1)/2}, \text{ } b_2 \neq 0}\dim(\theta_{\mbf{b},1}) =  2^{r(2n-3) -1}.$$
This minimum is achieved when $\mbf{b} = (b_i) = (b_1,b_2,0,\ldots, 0)$ with $b_1, b_2 \neq 0$. 
$$\min_{\mbf{b} \in (\F_{p^r}^+)^{n(n+1)/2}, \text{ } b_1 \neq 0}\dim(\theta_{\mbf{b},1})  =  2^{r(n-1) - 1}.$$
This minimum is achieved when $\mbf{b} = (b_i) = (b_1,0,b_3, \ldots, 0)$ with $b_1,b_3 \neq 0$.\end{proposition}

\begin{proof}
The proof is again similar. We refer the reader to the appendix (\ref{Ap5.10}) for this proof. \end{proof}

\noindent Note: For any $n > 2$ and any $r$, $2^{r(2n-3)-1} > 2^{r(n-1)-1}$.

\subsection{Proof of Theorem \ref{edPSp}}

\begin{proof}
\text{}  By Corollary \ref{rootofunity}, we may assume that our field $k$ contains $p$-th roots of unity. So by Lemma \ref{BMKS3.5}, faithful representations of $S(p,n)$ of minimal dimension will decompose as a direct sum of exactly $r = \text{rank}(Z(S(p,n)))$ irreducible representations.

\noindent \textbf{Case 1: } $\mbf{p \neq 2}$

\noindent Since the center of $S(p,n)$ has rank $r$ and the minimum dimension of an irreducible representation with non-trivial central character is $p^{r(n-1)}$,
$$\ed_k(PSp(2n,p^r),p) \geq rp^{r(n-1)}.$$

\noindent Let $\{e_i\}$ be a basis for $\F_{p^r}^+$ over $\F_p$, and let $s_i = (e_i,0,\ldots,0)$.  Let $\rho = \bigoplus_i \theta_{s_i,1}$ . Then by Proposition \ref{Ls3},
$$\dim(\rho) = rp^{r(n-1)}.$$
By Lemma \ref{BMKS3.4}, $\rho$ is a faithful representation of $S(p,n)$. Thus $$\ed_k(PSp(2n,p^r),p) = rp^{r(n-1)}.$$

\noindent \textbf{Case 2: } $\mbf{p = 2}$

\textbf{Step 1: Find the lower bound}

%\noindent For $p = 2$, the minimum dimension of an irreducible representation of $\syl_2(PSp(2n,2^r))$ with non-trivial central character is given by $\theta_{\mbf{b},1}$ with $\mbf{b} = (b_1,b_2,\ldots,0)$ with $(b_1,b_2) \neq (0,0)$.

\qquad \textbf{Subcase 1:} $\mbf{n = 2}$: Since the center has rank $2r$ and by Proposition \ref{Ls5} the minimum dimension of an irreducible representation with non-trivial central character is $2^{r-1}$, 
$$\ed_k(PSp(4,2^r)) \geq 2r2^{r-1}  = r2^r.$$

\qquad \textbf{Subcase 2:} $\mbf{n > 2}$:  Let $\rho = \rho_i$ be a minimal dimensional faithful representation.  Since the set of all central characters $\{\chi_i\}$ must form a basis for $\widehat{(\F_{p^r}^+)^2}$, we can conclude that $b_2 \neq 0$ for at least $r$ of the $\rho_i$. So for these $\rho_i$ minimum dimension is $2^{r(2n-3)-1}$, by Proposition \ref{Ls2}. The other $r$ may have $b_2 = 0$, so their minimum dimension is $2^{r(n-1)-1}$, by Proposition \ref{Ls2}. Thus we have 
$$\ed_k\left(PSp(2n,2^r),2\right) \geq r2^{r(2n-3)-1} + r2^{r(n-1)-1} = r2^{r(n-1)-1}(2^{r(n-2)} + 1).$$

\textbf{Step 2: Construct the upper bound}

\noindent Let $\{e_i\}_{i=1}^{2r}$ be a basis for $\F_{2^r}^+$ over $\F_2$. Let $x$ be a nonzero element in $\F_{2^r}$. We will choose subsets $S$ of $\Delta = \sym(n,p^r)$ such that the set of all central characters of $\{\theta_{\mbf{b},1}\}_{\mbf{b} \in S}$ form a basis for the characters of the center. For $n = 2$, let $S = \{(e_i,e_i,0),(x,e_i,0)\}_{i=1}^{2r}.$ For $n > 2$, let 
$$S = \{(e_i,e_i,0,\ldots,0), (e_i,0,x,0, \ldots,0)\}_{i=1}^{2r}.$$
Let $\rho = \bigoplus_{\mbf{b} \in S} \theta_{\mbf{b},1}$. Then by Propositions \ref{Ls5} and \ref{Ls2},
$$\dim(\rho) = \sum_{\mbf{b} \in S} \dim(\theta_{\mbf{b},1}) = \begin{cases} r2^r, &n = 2, r > 1\\
r2^{r(n-1)-1}(2^{r(n-2)} + 1), &n > 2\end{cases}.$$
By Lemma \ref{BMKS3.4}, 
$\rho$ is a faithful representation of $S(2,n)$. 
Steps 1 and 2 together give us that $$\ed_k\left(PSp(2n,2^r),2\right) = \begin{cases} r2^r, &n = 2, r > 1\\
r2^{r(n-1)-1}(2^{r(n-2)} + 1), &n > 2\end{cases}.$$
\end{proof}

\section{The Case of the Orthogonal Groups}
In this section, we will show the following theorem:
\begin{theorem}\label{edO2n} For $\epsilon \in \{\pm\}$ in the notation of Subsection \ref{6.1}, $k$ a field such that $\text{char } k \neq p$,
\begin{align*} 
 \ed_k(P\Omega^\epsilon(2m,p^r),p) = \ed_k(\Omega^\epsilon(2m,p^r),p) = \begin{cases}  2r, &2m = 4, \text{ any } p\\
 rp^{2r(m-2)}, &2m > 4, \text{ any } p\\
\end{cases}\end{align*}
Furthermore, $\ed_k(O^\epsilon(2m,2^r),2) = 1 + \ed_k(\Omega^\epsilon(2m,2^r),2),$ and  for $p \neq 2$,  $\ed_k(O^\epsilon(2m,p^r),p) = \ed_k(\Omega^\epsilon(2m,p^r),p)$.
 \end{theorem}
 
% We do not need to consider the case $n = 2m+1, p = 2$ since $O^\epsilon(2m+1,2^r) \cong Sp(2m,2^r)$ (\cite{Gr}, Theorem 14.2), so this case is taken care of in the work on the symplectic groups. 
 
\subsection{Definitions}\label{6.1}

\subsubsection{\texorpdfstring{The case $p \neq 2$}{The case p neq 2}}

Let $$A^+ = \begin{pmatrix} 0_m & \text{Id}_m\\
\text{Id}_m & 0_m\end{pmatrix}.$$
Let $\eta \in \F_{p^r}^\times$ be a non-square and let $$A^- = \begin{pmatrix} 0_{m-1} & \mbf{0} & \mbf{0} & \text{Id}_{m-1}\\
\mbf{0} & 1 & 0 & \mbf{0}\\
\mbf{0} & 0 & -\eta & \mbf{0}\\
\text{Id}_{m-1} & \mbf{0} & \mbf{0} &  0_{m-1}\end{pmatrix}.$$

\begin{definition} For $\epsilon \in \{\pm\}$, the orthogonal groups associated with $A^\epsilon$ are defined by 
$$O^\epsilon(2m,p^r) := \{M \in GL(2m,\F_{p^r}) : M^T A^\epsilon M = A^\epsilon\}.$$
The special orthogonal groups are defined by 
$$SO^\epsilon(2m,p^r) := \{M \in O^\epsilon(2m,p^r) : \det(M) = 1\}.$$ We define
$$\Omega^\epsilon(2m,p^r) := SO^\epsilon(2m,p^r)' \text{ (the commutator subgroup}).$$ Lastly, we define 
$$P\Omega^\epsilon(2m,p^r) := \Omega^\epsilon(2m,p^r)/(\Omega^\epsilon(2m,p^r) \cap \{\pm \text{Id}\}).$$ \end{definition}

\subsubsection{\texorpdfstring{The case $p = 2$}{The case p = 2}}

For $\mbf{x} = (x_i) \in \F_{p^r}^n$, let $Q^+(\mbf{x}) = \sum_{i=1}^m x_ix_{i+m}$, and let
 $$A_m^+ = \begin{pmatrix} 0_m & \text{Id}_m\\
0_m & 0_m\end{pmatrix}.$$
Then $Q^+(\mbf{x}) = \mbf{x}A_m^+\mbf{x}^T$. By Artin-Schreier theory, there exists $\eta \in \F_{2^r}$ such that $z^2 + z + \eta$ is irreducible in $\F_{2^r}[z]$. 

Let
$$Q_m^-(\mbf{x}) = \sum_{i=1}^m x_ix_{i+m} + x_m^2 + x_mx_{2m} + \eta x_{2m}^2$$ and define $A_m^-$ to be 

 $$A_m^-  = \begin{pmatrix} 0^1_m & \text{Id}_m\\
0_m & 0^\eta_m \end{pmatrix},\qquad \text{ where } 0^1_m = \begin{pmatrix} 0_{m-1} & \mbf{0}\\
\mbf{0} & 1\end{pmatrix} \text{ and } 0^\eta_m = \begin{pmatrix} 0_{m-1} & \mbf{0}\\
\mbf{0} & \eta\end{pmatrix}.$$
Then $Q_m^-(x) = \mbf{x}A_m^-\mbf{x}^T$. So if we write $\mbf{x} = (\mbf{a},b,\mbf{c},e)$ where $\mbf{a}, \mbf{c} \in \F_{2^r}^{m-1}, b,e \in \F_{2^r}$, then 
$$Q_m^-(\mbf{x}) = Q^+_{m-1}(\mbf{a},\mbf{c}) + b^2 + be + \eta e^2 = \mbf{a}\mbf{c}^T + b^2 + be + \eta e^2.$$ Or if we write $\mbf{x} = (\mbf{y},\mbf{z})$ where $\mbf{y},\mbf{z} \in \F_{2^r}^m$, then $$Q_m^-(\mbf{x}) = \mbf{y}\mbf{z}^T + y_m^2 + \eta z_m^2.$$

\begin{definition} Define $O^\epsilon(2m,2^r)$ as 
$$O^\epsilon(2m,2^r) := \{M \in GL(2m,\F_{2^r}) : Q^\epsilon(Mx) = Q^\epsilon(x) \text{ for all } x \in \F_{2^r}^{2m}\}.$$\end{definition}

\begin{definition} Define $B^\epsilon(x,y) = Q^\epsilon(x+y) + Q^\epsilon(x) + Q^\epsilon(y)$, the bilinear form corresponding to $Q^\epsilon$.\end{definition}

\noindent Note that $B^+(x,y) = \sum_{i=1}^m x_iy_{i+m} + \sum_{i=1}^m y_ix_{i+m}$.
So the corresponding matrix is 
$$S = \begin{pmatrix} 0 & \text{Id}_m\\
\text{Id}_m & 0\end{pmatrix}.$$
That is, $B^+(x,y) = xSy^T$. And $B^-(x,y) = \sum_{i=1}^{m-1} x_iy_{i+m} + y_ix_{i+m}  + x_{m}y_{2m} + y_{m}x_{2m}$, so the corresponding matrix is also $S$. That is, we have $B^-(x,y) = xSy^T = B^+(x,y)$, the same bilinear form as for $A^+$. Note that this is a nondegenerate alternating form and we have 

$$O^\epsilon(2m,2^r) \subset Sp(2m,2^r),$$ 
where $Sp(2m,2^r)$ is the symplectic group corresponding to $S$.

\begin{definition} Define $\Omega^\epsilon(2m,2^r) := O^\epsilon(2m,2^r)'$ (the commutator subgroup).\end{definition}

\noindent For consistency, we make the following definition:

\begin{definition} Define $P\Omega^\epsilon(2m,2^r) :=  \Omega^\epsilon(2m,2^r)/(\Omega^\epsilon(2m,2^r) \cap \{\pm \text{Id}\}) = \Omega^\epsilon(2m,2^r)$. \end{definition}

\begin{definition} The \emph{Dickson invariant}, $\delta^\epsilon_{2m,2^r}$, is a homomorphism from $O^\epsilon(2m,2^r)$ to $\Z/2\Z$ given by $\delta^\epsilon_{2m,2^r}(M) = \text{rank}(\text{Id}_{2m} - M) \mod 2$. Define 
$$SO^\epsilon(2m,2^r) := \ker \delta^\epsilon_{2m,2^r}.$$ \end{definition}

\begin{definition} Given $\epsilon \in \{\pm\}$, the Witt index $w_\epsilon$ is defined to be the dimension of a maximal totally isotropic subspace of $\F_{2^r}$ with respect to the quadratic form $Q^\epsilon$. \end{definition}

Grove shows (\cite{Gr}, Proposition 14.41) that for Witt index $w_\epsilon > 0$, and $n \geq 2$, 
$$\Omega^\epsilon(2m,2^r) = O^\epsilon(2m,2^r)' = SO^\epsilon(2m,2^r)'. $$ He also shows (\cite{Gr}, Theorem 14.43) that if $m \geq 2$ and $(m,w_\epsilon) \neq (2,2)$, then $\Omega^\epsilon(2m,2^r)$ is simple.

\subsection{\texorpdfstring{The Sylow $p$-subgroups and their centers}{The Sylow p-subgroups and their centers}}

\begin{definition} For any prime $p$, define $\asym(m,p^r)$ as the group of $m \times m$ anti-symmetric matrices under addition (with entries from $\F_{p^r}$).\end{definition}

\begin{definition}  For $p = 2$, define $\asymo(m,2^r) \subset \asym(m,2^r) = \sym(m,2^r)$ as the subgroup of symmetric/antisymmetric matrices with 0's on the diagonal. That is, $$\asymo(m,2^r) = \{B \in \sym(m,2^r) = \asym(m,2^r) : B_{i,i} = 0, \text{ } \forall i\}.$$\end{definition}

For $p \neq 2$, the Sylow $p$-subgroups of $P\Omega^+(2m,p^r)$, $\Omega^+(2m,p^r)$, and $O^+(2m,p^r)$ are isomorphic, so it suffices to consider the Sylow $p$-subgroups of $\Omega^+(2m,p^r)$. (We do this for notational purposes so we can combine the arguments with the case $p = 2$.) A direct computation shows the following.

\begin{lemma}\label{Osylp}[See \cite{Pre}, \cite{Mar}] For $p \neq 2$, let 
$$S^+(p,2m) = \{\begin{pmatrix} A & 0_m\\
0_m & (A^{-1})^T\end{pmatrix}\begin{pmatrix} \text{Id}_m & B\\
0_m & \text{Id}_m\end{pmatrix} : A \in \Up_{m}(\F_{p^r}), B \in \asym(m,p^r)\}.$$ 
%and for $p \neq 2, \epsilon = -$, let
%$$S^-(p,2m) =  \{\begin{pmatrix} A & 0_m\\
%0_m & (A^{-1})^T\end{pmatrix}\begin{pmatrix} \text{Id}_m & 0_m\\
%C & \text{Id}_m\end{pmatrix} : A \in \Up_{m}(\F_{p^r}), C \in \asym(m,p^r)\}.$$ 
Then $S^+(p,2m)$ is isomorphic to the elements in $\syl_p(\Omega^+(2m,p^r))$. 
%and $S^-(p,2m)$ is isomorphic to the elements in $\syl_p(\Omega^-(2m,p^r))$.
\end{lemma}

\begin{corollary} For $p \neq 2$, $S^+(p,2m)$ as defined in Lemma \ref{Osylp}, 
%$\epsilon \in \{\pm\}$, 
$$S^+(p,2m) \cong \asym(m,p^r) \rtimes \Up_{m}(\F_{p^r}),$$
where the action is given by $A(B) = ABA^T.$
\end{corollary}
\begin{lemma}\label{O-syl} The Sylow $p$-subgroups of $O^+(2m,p^r)$ and $O^-(2m,p^r)$ are isomorphic.  \end{lemma}

\begin{proof}
For the proof, see the appendix (\ref{Ap5}).
\end{proof}

Since $S^+(p,2m) \cong S^-(p,2m)$, it suffices to consider $S^+(p,2m)$. 
For the sake of simplicity of notation, let $S(p,2m) = S^+(p,2m)$.

\begin{lemma}\label{Osyl2} Let  
$$S(2,2m) = \{\begin{pmatrix} A & 0_m\\
0_m & (A^{-1})^T\end{pmatrix}\begin{pmatrix} \text{Id}_m & B\\
0_m & \text{Id}_m\end{pmatrix} : A \in \Up_{m}(\F_{2^r}), B \in \asymo(m,2^r)\}.$$ Then $S(2,2m) \in \syl_2(\Omega^\epsilon(2m,2^r))$ for $\epsilon \in \{\pm\}$. \end{lemma}

\begin{corollary} For $S(2,2m)$ as defined in Lemma \ref{Osyl2},
$$S(2,2m) \cong \asymo(m,2^r) \rtimes \Up_m(\F_{2^r}),$$ where the action is given by 
$A(B) = ABA^T.$\end{corollary}

The above lemma is slightly more involved; see the appendix (\ref{Ap6}) for the details. For $O^\epsilon(2m,2^r)$, note that $\langle -\text{Id} \rangle \times S(2,2m)$ is a Sylow $2$-subgroup of $O^\epsilon(2m,2^r)$. Thus $\ed_k(O^\epsilon(2m,2^r),2) = 1 + \ed_k(\Omega^\epsilon(2,2^r),2).$

For $n = 4$,  the action of $\Up_2(\F_{p^r}) \cong \F_{p^r}$ on $\asym(2,p^r) \cong \F_{p^r}$ is trivial and so $S(p,n) \cong \F_{p^r} \times \F_{p^r}$. Thus the Sylow $p$-subgroup is abelian.

\begin{lemma}\label{Lem6.18} For any prime $p$, $m > 2$, let $S(p,2m) = S^+(p,2m)$ be defined as in Lemmas \ref{Osylp} and \ref{Osyl2}. Then $$Z(S(p,2m)) = \{\begin{pmatrix} \text{Id}_m & D\\
0_m & \text{Id}_m\end{pmatrix} : D = \begin{pmatrix} 0 & x & \mbf{0}\\
-x & 0 & \mbf{0}\\
\mbf{0} & \mbf{0} & 0_{m-2}\end{pmatrix}\} \cong \F_{p^r}^+ \cong (\Z/p\Z)^r$$
\end{lemma}

For the calculation of the centers, see the appendix (\ref{Ap7}).

\subsection{Classifying the irreducible representations}

By Corollary \ref{rootofunity}, we may assume that our field $k$ contains $p$-th roots of unity. We will use Wigner-Mackey Theory with 
$$S(p,2m) \cong \begin{cases} \asym(m,p^r) \rtimes \Up_m(\F_{p^r}) &p \neq 2\\
\asymo(m,2^r) \rtimes \Up_m(\F_{2^r}) &p = 2\end{cases}$$
 to see what is the minimum dimension of an irreducible representation with non-trivial central character. So
 $$\Delta = \begin{cases} \asym(m,p^r) &p \neq 2\\
\asymo(m,2^r) &p = 2\end{cases} \cong (\F_{p^r}^+)^{m(m-1)/2}, \qquad L = \Up_m(\F_{p^r}).$$
For $$B = \begin{pmatrix} 0 & b_1 & \cdots & & b_{m-1}\\
-b_1 &  0 & b_{m} & \cdots & b_{2m-3}\\
\vdots & &\ddots & & \vdots\\
-b_{m-2} & \cdots & & 0 & b_{m(m-1)/2}\\
-b_{m-1} & \cdots & & -b_{m(m-1)/2} & 0\end{pmatrix} \in \begin{cases} \asym(m,p^r), &p \neq 2\\
\asymo(m,p^r), &p = 2 \end{cases} $$
let $\mbf{b} = (b_1,\cdots, b_{m(m-1)/2})$. (When $p = 2$, the negatives go away.) Then the map $B \mapsto \mbf{b}$ gives an isomorphism $\begin{cases} \asym(m,p^r), &p \neq 2\\
\asymo(m,p^r), &p = 2\end{cases} \cong (\F_{p^r}^+)^{m(m-1)/2}.$

Fix $\psi$ a non-trivial character of $\F_{p^r}^+$. By Lemma \ref{charform}, there is an isomorphism between $(\F_{p^r}^+)^{m(m-1)/2}$ and $\widehat{(\F_{p^r}^+)^{m(m-1)/2}}$ given by sending $\mbf{b} \in (\F_{p^r}^+)^{m(m-1)/2}$ to the character $\psi_\mbf{b}$ defined by $\psi_{\mbf{b}}(\mbf{d}) = \psi(\mbf{b} \cdot \mbf{d})$. As for the symplectic groups, a straightforward computation shows that for any prime $p$, the characters extending a non-trivial central character are $\psi_{\mbf{b}}$ with $b_1 \neq 0$. Note that $H \in L_\mbf{b}$ if and only if $\psi(\mbf{b} \cdot (\mbf{hdh^T} - \mbf{d})) = 1$ for all $\mbf{d} \in (\F_{p^r}^+)^{m(m-1)/2},$
where $\mbf{hdh^T}$ is the vector in $(\F_{p^r}^+)^{m(m-1)/2}$ corresponding to $HDH^T \in \sym(m,p^r)$ under the isomorphsim $\sym(m,p^r) \cong (\F_{p^r}^+)^{m(m-1)/2}$. See the appendix(\ref{6.4calc}) for the full details of the computation.

\begin{proposition}\label{Ls1} For any prime $p$,
$$\min_{\mbf{b} \in  (\F_{p^r}^+)^{m(m-1)/2}, \text{ } b_1 \neq 0} \dim(\theta_{\mbf{b},1}) =  p^{2r(m-2)}.$$
This minimum is achieved when $\mbf{b} = (b,0,\ldots,0)$ with $b \neq 0$.\end{proposition}

\begin{proof}

Recall that $\mbf{b}, \mbf{d}$ are vectors corresponding to matrices $B, D \in \Delta$ via the isomorphism $\Delta \cong (\F_{p^r}^+)^{m(m-1)/2}$ and $\mbf{hdh^T}$ is the vector in $(\F_{p^r}^+)^{m(m-1)/2}$ corresponding to $HDH^T \in \asym(m,p^r)$ under the isomorphism $\asym(m,p^r) \cong (\F_{p^r}^+)^{m(m-1)/2}$.

\begin{calc'} For $j_0 > 2$, choosing $d_{i,j} = 0$ except for $d_{1,j_0} = -d_{j_0,1}$ and performing similar calculations to those for Propostion \ref{Ls3}, we get that
$$\sum_{i=2}^{j_0-1} h_{i,j_0}B_{1,i} = 0.$$ 
For $2 \leq k \leq n$, if $B_{1,k} \neq 0$, we can solve for $h_{k,j_0}$ in terms of $h_{i,j_0}$ for $i \neq 1,k$. If particular, since $B_{1,2} = b_1 \neq 0$, we can solve for $h_{2,j_0}$ in terms of $h_{i,j_0}$ with $i > 2$.
%(In particular, note $h_{2,3} = 0$.) 
\end{calc'} 
\bigskip

\begin{calc'} For $j_0 > 2$, choose $d_{i,j} = 0$ except for $d_{2,j_0} = -d_{j_0,2}$, and again performing similar calculations to those for Propostion \ref{Ls3}, we get
$$-B_{1,2}h_{1,j_0} + \sum_{i=2}^{j_0} B_{1,i}h_{i,j_0}h_{1,2} + \sum_{i=3}^{j_0-1} B_{2,i}h_{i,j_0} = 0.$$
Since $B_{1,2} = b_1 \neq 0$, we can solve for $h_{1,j_0}$ in terms of $h_{1,2}$ and $h_{i,j_0}$ with $i > 2$.

\end{calc'}

\noindent Putting these two calculations together, we can conclude that for all $\mbf{b} = (b_i)$ with $b_1 \neq 0$,
$$|L_{\mbf{b}}| \leq |\{H : H_{2,j} \text{ fixed }, \forall j > 2, H_{1,j} \text{ fixed }, \forall j > 2\}| = |\F_{p^r}| \cdot |U_{m-2}(\F_{p^r})| = p^{r[(m-2)(m-3)/2 + 1]}.$$

We leave to the reader the verification that the minimum is achieved for $\mbf{b} = (b,0,\ldots,0)$.
\end{proof}

For more details of the above proof, see the appendix (\ref{Ap8}).

\subsection{Proof of Theorem \ref{edO2n}}

\begin{proof}
By Lemma \ref{BMKS3.5}, faithful representations of $S(p,2m)$ of minimal dimension will decompose as a direct sum of exactly $r = \text{rank}(Z(S(p,2m)))$ irreducible representations. We will complete the proof for four separate cases.

\noindent \textbf{Case 1:} $\mbf{2m = 4}$

For $p \neq 2$, the action of $\Up_2(\F_{p^r}) \cong \F_{p^r}$ on $\asym(2,p^r) \cong \F_{p^r}^+$ is trivial, and so $S^+(p,4) \cong \F_{p^r}^+ \times \F_{p^r}^+$.  So $\ed_k(S^+(p,4)) = \ed_k(\F_{p^r}^+ \times \F_{p^r}^+) = 2r$. 

Similarly for $n = 4$, $p = 2$, $S^+(2,4) \cong \F_{2^r} \times \F_{2^r}^+$.  So $\ed_k(S^+(2,4)) = \ed_k(\F_{2^r}^+ \times \F_{2^r}^+) = 2r$.

Note: The work in the previous section is valid, though unnecessary, for $n = 4$. It gives us that the minimum dimension of an irreducible representation is $1$. Then since the center has rank $2r$, we will get an essential dimension of $2r$.

\noindent \textbf{Case 2:} $\mbf{2m > 4}$

\noindent Since the center has rank $r$ and the minimum dimension of an irreducible representation with non-trivial central character is $p^{2r(m-2)}$, $$\ed_k(\Omega^+(2m,p^r),p) \geq rp^{2r(m-2)},$$

\noindent Let $\{e_i\}$ be a basis for $\F_{p^r}^+$ over $\F_p$, and let $s_i = (e_i,0,\ldots,0)$.  Let $\rho = \bigoplus_i \theta_{s_i,1}$ . Then by Proposition \ref{Ls1}, 
$$\dim(\rho) = \sum_{i=1}^r \dim(\theta_{s_i,1}) = rp^{2r(m-2)}.$$
By Lemma \ref{BMKS3.4}, $\rho$ is a faithful representation of $S^+(p,2m)$. Therefore
 $$ed_k(\Omega^\epsilon(2m,p^r),p) = rp^{2r(m-2)}.$$

%\noindent \textbf{Case 4:} $\mbf{n = 2m + 1, p \neq 2}$
%
%
%\noindent Let $\rho = \rho_i$ be a minimal dimensional faithful representation. Since the set of all central characters $\{\chi_i\}$ must form a basis for $\widehat{Z(S(p,2m+1))}$, we can conclude that $b_1 \neq 0$ for at least $r$ of the $\chi_i = \psi_{\mbf{b}}$, and so the dimension is at least $p^{r(m-1)(m-2)}$. The other $r$ may have $b_1 = 0$ but then we must have $a_1 \neq 0$, so their minimum dimension is $p^{r(m-1)}$. Thus
%$$\ed_k\left(S(p,2m+1)\right) \geq rp^{r(m-1)(m-2)} + rp^{r(m-1)}.$$
%
%\noindent Let $\{e_i\}$ be a basis for $\F_{p^r}^+$ over $\F_p$, and let 
%$S = \{(e_i,0,\ldots,0), (0, \ldots, 0, e_i, 0, \ldots,0)\}.$
%Let $\rho = \bigoplus_{s \in S} \theta_{s,1}$.  Then by Proposition \ref{Ls},
%$$\dim(\rho) = \sum_{s \in S} \dim(\theta_{s,1}) = rp^{r(m-1)(m-2)} + rp^{r(m-1)}.$$
%By Lemma \ref{BMKS3.4}, $\rho$ is a faithful representation of $S(p,2m+1)$. Therefore
%$$\ed_k(\Omega^\epsilon(2m,p^r),p) = rp^{r(m-1)(m-2)} + rp^{r(m-1)}.$$
\end{proof}

\newpage

\section{Appendix}

In this appendix, we provide some details for the computations in this article.

\subsection{Remark 4}\label{Ap1} 

Remark 4: Duncan and Reichstein calculated the essential $p$-dimension of the pseudo-reflection groups: For $G$ a pseudo-reflection group with $k[V]^G = k[f_1,\cdots,f_n]$, $d_i = \text{deg}(f_i),$ $\ed_k(G,p) = a(p) = |\{i : d_i \text{ is divisible by } p\}|$ (\cite{DR}, Theorem 1.1). These groups overlap with the groups above in a few small cases (The values of $d_i$ are in \cite{ST}, Table VII): 
\begin{enumerate}[(i)]
\item Group 12 in the Shephard-Todd classification, $Z_2.O \cong GL_2(\F_3)$: $d_1, d_2$ are $6,8$; so 
$$\ed_k(Z_2.O,3) = 1 = \ed_k(GL_2(\F_3),3).$$

\item Group 23 in the Shephard-Todd classification, $W(H_3) \cong \Z/2\Z \times PSL_2(\F_5)$: $d_1, \cdots d_3$ are $2,6,10$; so 
$$\ed_k(W(H_3),5) = 1 = \ed_k(PSL_2(\F_5),5).$$

\item Group 32 in the Shephard-Todd classification, $W(L_4) \cong \Z/3\Z \times Sp(4,3)$: $d_1, \cdots d_4$ are $12,18,24,30$; so 
$$\ed_k(W(L_4),3) = 4 = 1 + \ed_k(Sp(4,3),3).$$

\item Group 33 in the Shephard-Todd classification, $W(K_5) \cong \Z/2\Z \times PSp(4,3) \cong \Z/2\Z \times PSU(4,2)$: $d_1, \cdots d_5$ are $4, 6, 10, 12, 18$; so 
$$\ed_k(W(K_5),3) = 3 = \ed_k(PSp(4,3),3)$$
and 
$$\ed_k(W(K_5),2) = 5 = 1 +\ed_k(PSU(4,2)).$$

\item Group 35 in the Shephard-Todd classification, $W(E_6) \cong O^-(6,2)$: 
$d_1, \cdots, d_6$ are $2,5,6,8,9,12$; so 
$$\ed_k(W(E_6),2) = 4 = \ed_k(O^-(6,2),2).$$

\item Group 36 in the Shephard-Todd classification, $W(E_7) \cong \Z/2\Z \times Sp(6,2)$: $d_1, \cdots, d_7$ are $2,6,8,10,12,14,18$; so 
$$\ed_k(W(E_7),2) = 7 = 1 + \ed_k(Sp(6,2),2).$$
\end{enumerate}

\subsection{Lemma 2.8}\label{Ap2}
\begin{lma}[\ref{subseted}]
If $H \subset G$, then $\ed_k(H,p) \leq \ed_k(G,p)$.
\end{lma}
\begin{proof}

\begin{align*}
\ed_k(G,p) &= \ed_k(H^1(-;G))\\
&= \sup_{E \text{ Galois } G\text{-algebra over }F, \text{ } F/k \in \text{Fields}/k} \ed_k(E)
\end{align*}
And
\begin{align*}
\ed_{k}(G,p) &= \ed_k(H^1(-;G),p)\\
&= \sup_{E \text{ Galois } G\text{-algebra over }F, \text{ } F/k \in \text{Fields}/k} \ed_k(E,p)\\
&= \sup_{E \text{ Galois } G\text{-algebra over }F} \left(\min \trdeg_k(F''))\right)
\end{align*}
where the minimum is taken over all 
\begin{align*}
F'' \subset F' \text{ a finite extension}, \text{ with } F \subset F'\\
[F':F] \text{ finite } \text{ s.t. } p \nmid [F':F] \text{ and }\\
EF' = E'F'' \text{ for some } E' \text{ Galois } G\text{-algebra over } F''
\end{align*}

Thus 

\begin{align*}
&\ed_k(G,p)\\
&= \sup_{E \text{ Galois } G\text{-algebra over }F}\\
&\qquad \min_{F \subset F' \text{ a finite extension } \text{ and } p \nmid [F':F]}\\
&\qquad \qquad \min_{F'' \text{ s.t. } EF' = E'F'' \text{ for some } E' \text{ Galois } G\text{-algebra over } F''}  \trdeg_k(F''))
\end{align*}
And similarly,
\begin{align*}
&\ed_k(H,p)\\
&= \sup_{E \text{ Galois } H\text{-algebra over }F}\\
&\qquad \min_{F \subset F' \text{ a finite extension } \text{ and } p \nmid [F':F]}\\
&\qquad \qquad \min_{F'' \text{ s.t. } EF' = E'F'' \text{ for some } E' \text{ Galois } H\text{-algebra over } F''}  \trdeg_k(F''))
\end{align*}

Since $H$ is a subgroup of $G$, we have that given a Galois $H$-algebra $E$ over $F$, we can extend to a Galois $G$-algebra over F.  Thus it suffices to show that for $E \subset E_1$  with $E$ a Galois $H$-algebra over $F$ and $E_1$ a Galois $G$-algebra over $F$, if $F \subset F'$ is a finite extension with $p \text{ } \nmid \text{ } [F':F]$, then
\begin{align*}
&\min_{F'' \text{ s.t. } EF' = E'F'' \text{ for some } E' \text{ Galois } H\text{-algebra over } F''}  \trdeg_k(F''))\\
 &\leq \min_{F'' \text{ s.t. } E_1F' = E_1'F'' \text{ for some } E_1' \text{ Galois } G\text{-algebra over } F''}  \trdeg_k(F''))
\end{align*} 
Let $F \subset F'$ be a finite extension with $p \text{ } \nmid \text{ } [F':F]$. If $F''$ is such that there exists $E_1'$ with $E_1F' = E_1'F''$, then there exists a Galois $G$ algebra $E'$ over $F''$ contained in $E_1'F'$ such that $E_0F'' = E'F'$. 
Let $E' = E_0 \cap E$.  Then $E'$ is a Galois $H$-algebra over $F''$. Hence $F''$ is considered in the min for $\ed_\C(H,p)$.
Thus the desired inequality holds. Therefore,
$$\ed_k(H,p) \leq \ed_k(G,p).$$

\end{proof}

\subsection{Lemma 2.9}\label{Ap3}

\begin{lma}[\ref{Lempsyl}] Let $S \in \syl_p(G)$. Then $\ed_k(G,p) = \ed_k(S,p).$  \end{lma}

\begin{proof}

By Lemma 2.8, we already have $\ed_k(S,p) \leq \ed_k(G,p)$. So we only need to show that $\ed_k(G,p) \leq \ed_k(S,p)$. Since $S$ is a subgroup of $G$, we have that given a Galois $G$-algebra $E$ over $F$ there exists an extension of $F$, $F_0 = E^S$, such that $E$ is a Galois $S$-algebra over $E^S$. 
Thus it suffices to show that for $E$ a Galois $G$-algebra over $F$, which is also a Galois $S$-algebra over $F_0 = E^S$,

\begin{align*}
&\ed_k(G,p)\\
&\min_{F \subset F' \text{ a finite extension } \text{ and } p \nmid [F':F]}\\
&\qquad \min_{F'' \text{ s.t. } EF' = E'F'' \text{ for some } E' \text{ Galois } G\text{-algebra over } F''}  \trdeg_k(F''))\\
\leq &\min_{F_0 \subset F' \text{ a finite extension } \text{ and } p \nmid [F':F_0]}\\
&\qquad \min_{F'' \text{ s.t. } EF' = E'F'' \text{ for some } E' \text{ Galois } S\text{-algebra over } F''}  \trdeg_k(F''))
\end{align*}

Note that since $S$ is a subgroup of $G$ of index prime to $p$ and $[F_0:F] = [E^S :F] = [G:S]$, we get that $p \text{ } \nmid \text{ } [F_0:F]$.  
Given $F_0 \subset F' \text{ a finite extension } \text{ and } p \nmid [F':F_0]$, then

$$p \text{ } \nmid \text [F':F] = [F':F_0][F_0:F].$$

Thus $F'$ is also considered in the minimum for $\ed_k(G,p)$, and so the desired inequality holds. Therefore,
$$\ed_k(G,p) \leq \ed_k(H,p).$$

\end{proof}

\subsection{Lemma 2.10}\label{Ap4}
\begin{lma}[\ref{primetopext}; \cite{KM}, Remark 4.8] If $k$ a field of characteristic $\neq p$, $k_1/k$ a finite field extension of degree prime to $p$, then 
$ \ed_{k}(G,p) = \ed_{k_1}(G,p).$
\end{lma}

\begin{proof}
$T:\text{Fields}/k \to \text{Sets}$ be defined by $T(F/k) = $ the isomorphism class of $G$-torsors over $\text{Spec}F$. Recall that

\begin{align*}\ed_k(G,p) &= \sup_{t \in T(F), F/k \in \text{Fields}/k} \ed_k(t,p)\\
&= \sup_{t \in T(F), F/k \in \text{Fields}/k} \left( \min_{F'' \subset F' \text{ s.t. } p \nmid [F':F''] \text{ and the image of } t \text{ in } T(F') \text{ is in } \text{Im}(T(F'') \to T(F'))} \trdeg_k(F'') \right)   
\end{align*}

\noindent First we will show that $\mbf{\ed_{k_1}(G,p) \leq \ed_{k}(G,p)}$:

Let $F_1/k_1$, $t_1 \in T(F)$. We want to show that there exist $F/k$, $t \in T(F)$ such that 
$$\ed_{k_1}(t_1,p) \leq \ed_{k}(t,p).$$ 

In other words, if we are given $F'' \subset F'$ such that $p \nmid [F':F'']$, the image of $t$ in $T(F')$ is in $\text{Im}(T(F'') \to T(F'))$, we need to be able to show that there exists $F_1'' \subset F_1'$ such that $p \nmid [F_1':F_1'']$ and the image of $t_1$ in $T(F_1')$ is in $\text{Im}(T(F_1'') \to T(F_1'))$ and $$\trdeg_{k_1}(F_1'')\leq  \trdeg_{k}(F'') .$$

So, let $F = F_1$ and $t = t_1$. Suppose we are given $F'' \subset F'$ such that $p \nmid [F':F'']$ and the image of $t$ in $T(F')$ is in $\text{Im}(T(F'') \to T(F'))$. In other words, there exists $t_2 \in T(F'')$, $t_3 \in T(F')$. such that $t_2 \text{ and } t_1$ both map to $t_3$. 
Then let $F_1'' = F''k_1, F_1' = F'k_1$.  Then since $p \nmid [k_1:k]$ and $G$ is a $p$-group, $t_2k_1 \in T(F_1''), t_3k_1 \in T(F_1'),$ and $t_1$ and $t_2k_1$ both map to $t_3k_1$ in $T(F_1')$.  Since $[F_1':F_1''] \divides [F':F'']$ and $p \nmid [F':F'']$, we have that $p \nmid [F_1':F_1'']$. Also the image of $t$ in $T(F_1')$ is in $\text{Im}(T(F_1'') \to T(F_1'))$. Moreover, $\trdeg_{k_1}F_1'' = \trdeg_k F''$.  

Therefore, we can conclude that $\mbf{\ed_{k_1}(T,p) \leq \ed_{k}(T,p)}$.

\bigskip

\noindent Now we will show that $\mbf{\ed_k(G,p) \leq \ed_{k_1}(G,p)}:$

Let $F/k$, $t \in T(F)$. We want to show that there exist $F_1/k_1$, $t_1 \in T(F')$ such that 
$$\ed_k(t,p) \leq \ed_{k_1}(t_1,p).$$ 

In other words, if we are given $F_1'' \subset F_1'$ such that $p \nmid [F_1':F_1'']$ and the image of $t_1$ in $T(F_1')$ is in $\text{Im}(T(F_1'') \to T(F_1'))$, we need to be able to show that there exists $F'' \subset F'$ such that $p \nmid [F':F'']$, the image of $t$ in $T(F')$ is in $\text{Im}(T(F'') \to T(F'))$ and $$\trdeg_{k}(F'') \leq \trdeg_{k_1}(F_1'').$$

So, let $F_1 = Fk_1$ and let $t_1$ be the image of $t$ in $T(F_1)$. Suppose we are given $F_1'' \subset F_1'$ such that $p \nmid [F_1':F_1'']$ and the image of $t_1$ in $T(F_1')$ is in $\text{Im}(T(F_1'') \to T(F_1'))$. Then let $F'' = F_1'', F' = F_1'$. Then $p \nmid [F' : F''] = [F_1':F_1'']$, and the image of $t$ in $T(F')$ is the image of $t_1$ in $T(F_1')$ (from $T(F_1)$), which is in $\text{Im}(T(F'') \to T(F'))$. Moreover $\trdeg_{k} F'' = \trdeg_{k} F_1'' = \trdeg_{k_1} F_1''$, since $k_1/k$ is a finite extension.  

Therefore, we can conclude that $\mbf{\ed_k(T,p) \leq \ed_{k_1}(T,p)}$.

\end{proof}

\subsection{Section 4.2 Calculation}\label{4.2calc}

We are using Wigner-Mackey Theory to analyze the irreducible representations of 
$$\Up_n(\F_{p^r}) \cong (\F_{p^r}^+)^{n-1} \rtimes \Up_{n-1}(\F_{p^r}).$$ So we have 
$$\Delta = (\F_{p^r}^+)^{n-1}, \qquad L = \Up_{n-1}(\F_{p^r}).$$
Let $$N = \{\begin{pmatrix} 1 & 0 & \ldots & 0 & b_{1} \\
0 & 1 & 0 & \ldots & b_2\\
& & \ddots & & \vdots\\
0 & 0 & \ldots & 1 & b_{n-1}\\
0 & 0 & 0 & \ldots & 1\end{pmatrix}\}, \qquad H =\{\begin{pmatrix} A & \mbf{0}\\
\mbf{0} & 1\end{pmatrix}, \text{ with } A \in \Up_{n-1}(\F_{p^r})\}.$$
The isomorphism $\Up_n(\F_{p^r}) \cong (\F_{p^r}^+)^{n-1} \rtimes \Up_{n-1}$ is given by $N \mapsto (\F_{p^r}^+)^{n-1}$ via

$$\{\begin{pmatrix} 1 & 0 & \ldots & 0 & b_{1} \\
0 & 1 & 0 & \ldots & b_2\\
& & \ddots & & \vdots\\
0 & 0 & \ldots & 1 & b_{n-1}\\
0 & 0 & 0 & \ldots & 1\end{pmatrix}\} \mapsto \mbf{b}$$
and $$\begin{pmatrix} A & \mbf{0}\\
\mbf{0} & 1\end{pmatrix} \mapsto A.$$

Recall that $L_\mbf{b}$ is defined to the be the stabilizer in $L$ of $\psi_\mbf{b}$ under conjugation. That is, $H \in L_{\mbf{b}}$ if and only if for all $\mbf{d} \in (\F_{p^r}^+)^{n-1}$,
$$\psi_{\mbf{b}}((0_n,H)(\mbf{d},\text{Id})(0_n,H^{-1})) = \psi_{\mbf{b}}(\mbf{d}, \text{Id}).$$
(Note here we are viewing $\psi_\mbf{b}$ as a map on $\{(\mbf{d}, \text{Id}) : \mbf{d} \in \Delta\} \subset \Delta \rtimes L\}$.) 

Note that $(0_n,H)(\mbf{d},\text{Id}_n)(0_n,H^{-1})$ corresponds to 
\begin{align*}
&\begin{pmatrix} H & \mbf{0}\\
0 & 1 \end{pmatrix} \begin{pmatrix} \text{Id} & \mbf{d}^T\\
\mbf{0} & 1\end{pmatrix} \begin{pmatrix} H^{-1} & \mbf{0}\\
\mbf{0} & 1 \end{pmatrix}\\
&= \begin{pmatrix} H & H\mbf{d}^T\\
\mbf{0} & 1\end{pmatrix} \begin{pmatrix} H^{-1} & \mbf{0}\\
\mbf{0} & 1 \end{pmatrix}\\
&= \begin{pmatrix} \text{Id} & H\mbf{d}^T\\
\mbf{0} & 1\end{pmatrix} 
\end{align*}
which corresponds to $(H\mbf{d}^T)^T$ in $\Delta = (\F_{p^r}^+)^{n-1}$. Then, viewing $\psi_\mbf{b}$ as a map on $\Delta$, we see that $H \in L_\mbf{b}$ if and only if for all $\mbf{d} \in (\F_{p^r}^+)^{n-1}$
\begin{align*}
&\psi_\mbf{b}((H\mbf{d}^T)^T) = \psi_\mbf{b}(\mbf{d})\\
&\Leftrightarrow \psi_\mbf{b}((H\mbf{d}^T)^T - \mbf{d}) = 1\\
&\Leftrightarrow \psi(\mbf{b}(H\mbf{d}^T - \mbf{d}^T)) = 1\\
&\Leftrightarrow \psi((\mbf{b}H - \mbf{b})\mbf{d}^T) = 1
\end{align*}

\subsection{Lemmas 5.6 and 5.7}\label{Ap5.67}
For any prime $p$, we define $$S(p,n) = \{\begin{pmatrix} A & 0_n\\
0_n & (A^{-1})^T\end{pmatrix}\begin{pmatrix} \text{Id}_n & B\\
0_n & \text{Id}_n\end{pmatrix} : A \in \Up_{n}(\F_{p^r}), B \in \sym(n,p^r)\}.$$ And it is easy to show that $S(p,n) \in \syl_p(Sp(2n,p^r))$ and that
$$S(p,n) \cong \sym(n,p^r) \rtimes \Up_{n}(\F_{p^r}),$$
where the action is given by $A(B) = ABA^T,$ where $B \in \sym(n,p^r), A \in \Up_n(\F_{p^r})$.

\begin{lma}[\ref{Lem5.6}] For $p \neq 2$, $S(p,n)$ the Sylow $p$-subgroup of $Sp(2n,p^r)$ defined above, $$Z(S(p,n)) = \{\begin{pmatrix} \text{Id}_n & D\\
0_n & \text{Id}_n\end{pmatrix} : D  = \begin{pmatrix} d & \mbf{0}\\
\mbf{0} & 0_{n-1} \end{pmatrix}\} \cong \F_{p^r}^+ \cong (\Z/p\Z)^r$$
\end{lma}

\noindent For the proof of of this Lemma, we need the following lemma:

\begin{lemma}\label{ADA} For $p \neq 2$, $D \in \sym(n,p^r)$, $AD = D(A^{-1})^T$
for all $A \in \Up_n(\F_{p^r})$ if and only if $D = \begin{pmatrix} d & \mbf{0}\\
\mbf{0} & 0_{n-1} \end{pmatrix}$.  \end{lemma}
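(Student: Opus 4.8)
The plan is to reduce the condition $AD = D(A^{-1})^T$ to a set of linear equations on the entries of $D$ by testing it against a well-chosen spanning set of generators of $\Up_n(\F_{p^r})$, namely the elementary unitriangular matrices $E_{k,\ell} = \mathrm{Id}_n + t e_{k\ell}$ for $k < \ell$ and $t \in \F_{p^r}$. First I would rewrite the condition in the cleaner equivalent form $A^T D A = D$ for all $A \in \Up_n(\F_{p^r})$: since $AD = D(A^{-1})^T$ holds for all $A$ iff $A^T (A D) = A^T D (A^{-1})^T = D$ (replacing $A$ by $A^{-1}$, which ranges over the same group), the hypothesis is equivalent to saying $D$ is fixed by the congruence action $A \cdot D = A^T D A$ of all of $\Up_n(\F_{p^r})$. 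This is exactly the action from Corollary \ref{PSpsylsemi}, so the lemma is really the computation of the fixed points of $\Up_n(\F_{p^r})$ acting on $\sym(n,p^r)$.

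Next I would plug in the transvections $A = \mathrm{Id}_n + t\, e_{k\ell}$ with $k<\ell$. A direct expansion gives $A^T D A = D + t(e_{\ell k} D + D e_{k\ell}) + t^2 e_{\ell k} D e_{k \ell}$. Setting this equal to $D$ for all $t \in \F_{p^r}$ and using that $\F_{p^r}$ has more than two elements when... — actually here one must be careful: if $p^r = 3$ there are nonzero values of $t$, and more importantly since we only need $p \neq 2$, the coefficients of $t$ and $t^2$ can be separated because the Vandermonde-type argument works as long as we have at least two distinct nonzero scalars, OR one simply notes $t=1$ and $t=-1$ already force the linear part to vanish since $p\neq 2$. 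So the linear term $e_{\ell k} D + D e_{k\ell}$ must vanish for every pair $k < \ell$. Reading off entries: $(e_{\ell k} D)_{ij} = \delta_{i\ell} D_{kj}$ and $(D e_{k\ell})_{ij} = D_{ik}\delta_{j\ell}$, so for each $k<\ell$ we get $D_{kj} = 0$ whenever $i = \ell$ and $j \neq \ell$ — running $\ell$ over all indices $>k$ — together with the symmetric statement. Working through these for all admissible $k<\ell$ forces $D_{ij} = 0$ for all $(i,j) \neq (1,1)$: any entry $D_{ij}$ with $\max(i,j) \geq 2$ gets killed by choosing the transvection indexed appropriately. Conversely, $D = \mathrm{diag}(d,0,\dots,0)$ visibly satisfies $A^T D A = D$ because $A \in \Up_n$ has first column $e_1$, so $DA = D$ and $A^T D = D$; this gives the easy direction.

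The main obstacle is purely bookkeeping: organizing the case analysis on the index pairs $(k,\ell)$ so that every off-diagonal and every non-$(1,1)$ diagonal entry of $D$ is shown to vanish, while making sure the argument genuinely uses $p \neq 2$ (it enters in discarding the $t^2$ term relative to the $t$ term, or equivalently in the step "$2D_{k\ell} = 0 \implies D_{k\ell}=0$"). I would present this by first handling the transvection $\mathrm{Id}_n + t e_{1,j}$ for $j \geq 2$ to knock out the first row/column beyond $D_{11}$, then $\mathrm{Id}_n + t e_{i,j}$ for $2 \leq i < j$ to clear the lower-right $(n-1)\times(n-1)$ block, and finally observe $D_{11}$ is unconstrained. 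The whole computation is elementary matrix algebra, so I would keep it terse and lean on the reformulation $A^T D A = D$ to make the symmetry of the argument transparent.
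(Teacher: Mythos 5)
Your overall strategy---recasting the condition as invariance under a congruence action and then testing against the elementary transvections $A = \mathrm{Id}_n + t\,e_{k\ell}$---is a clean alternative to the paper's induction on $n$, but there is a transposition error that makes the argument as written fail. The condition $AD = D(A^{-1})^T$ is equivalent to $ADA^{T} = D$ (multiply on the right by $A^T$ and use $(A^{-1})^T A^T = \mathrm{Id}_n$), \emph{not} to $A^T D A = D$ as you claim; note that the corrected form is precisely the action $A(B) = ABA^T$ appearing in Corollary~\ref{PSpsylsemi}, whereas $A^T D A$ is the congruence action of the \emph{lower} unitriangular group. Your derivation ``$A^T(AD) = A^T D (A^{-1})^T = D$'' does not go through, and the error is not harmless: working faithfully with $A^T D A$, the linear term in $t$ is $e_{\ell k}D + De_{k\ell}$, whose $(\ell,j)$-entry for $j\neq \ell$ is $D_{kj}$; taking $k=1$ and running $\ell$ over $\{2,\dots,n\}$ then kills \emph{all} of the first row, including $D_{11}$, so the argument would prove $D=0$ and contradict the lemma. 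Your check of the easy direction has the same flaw: for $D = \mathrm{diag}(d,0,\dots,0)$ one does \emph{not} have $DA = D$ or $A^T D = D$ (these depend on the first \emph{row} of $A$, which is unconstrained, not its first column); it is $AD = D$ and $DA^T = D$ that hold, again pointing to $ADA^T = D$.

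With the reformulation corrected to $ADA^T = D$, the rest of your computation works and is in fact cleaner than the paper's proof. The linear term becomes $e_{k\ell}D + De_{\ell k}$, whose $(k,j)$-entry for $j\neq k$ is $D_{\ell j}$ and whose $(k,k)$-entry is $D_{\ell k} + D_{k\ell} = 2D_{k\ell}$; since $p\neq 2$, vanishing of the linear coefficient (separated from the $t^2$ coefficient by testing $t=\pm1$, as you note) forces $D_{\ell j} = 0$ for all $j$, for every $\ell \geq 2$ (take $k=1$). Symmetry of $D$ then clears the corresponding columns, leaving only $D_{11}$ free; the quadratic term $e_{k\ell}D e_{\ell k}$, supported at $(k,k)$ with value $D_{\ell\ell}$, vanishes automatically once $D_{\ell\ell}=0$. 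This direct transvection argument avoids the paper's induction, which peels off the last row and column one at a time, and it generalizes immediately to the antisymmetric and Hermitian analogues (Lemmas~\ref{ADAO2n} and~\ref{ADAU}) with the same bookkeeping. So: sound approach, but fix the transpose throughout.
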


\noindent Granting this lemma, we can calculate the center:

\begin{proof}
\begin{align*}
S(p,n) &= \{\begin{pmatrix} A & 0_n\\
0_n & (A^{-1})^T\end{pmatrix}\begin{pmatrix} \text{Id}_n & B\\
0_n & \text{Id}_n\end{pmatrix} : A \in \Up_n(\F_{p^r}), B \in \sym(n,p^r)\}\\
&= \{\begin{pmatrix} A & AB\\
0_n & (A^{-1})^T\end{pmatrix} : A \in \Up_n(\F_{p^r}), B \in \sym(n,p^r)\}.
\end{align*}
Note that
{\footnotesize \begin{align*}
\begin{pmatrix} A & AB\\
0_n & (A^{-1})^T\end{pmatrix}^{-1}\begin{pmatrix} C & CD\\
0_n & (C^{-1})^T\end{pmatrix}\begin{pmatrix} A & AB\\
0_n & (A^{-1})^T\end{pmatrix} = \begin{pmatrix} A^{-1}CA & A^{-1}CAB + A^{-1}CD(A^{-1})^T - B((A^{-1}CA)^{-1})^{T}\\
0_n & ((A^{-1}CA)^{-1})^{T}\end{pmatrix}
\end{align*}}
So $\begin{pmatrix} C & CD\\
0_n & (C^{-1})^T\end{pmatrix} \in Z(S(p,n))$ if and only if
$C \in Z(\Up_n(\F_{p^r}))$ and 
$$CD = CB + CA^{-1}D(A^{-1})^T - B(C^{-1})^{T}, \qquad \text{ for all } A \in \Up_n(\F_{p^r}),B \in \sym(n,p^r).$$
Choosing $A, B = \text{Id}_n$, we need $CD = C + CD - (C^{-1})^T$. So $C = (C^{-1})^T$ and thus $C = \text{Id}_n$.
So the other requirement above becomes 
$$D = A^{-1}D(A^{-1})^T \Leftrightarrow AD = D(A^{-1})^T, \qquad \text{ for all } A \in \Up_n(\F_{p^r}).$$
By Lemma \ref{ADA}, we get that
$$Z(S(p,n)) = \{\begin{pmatrix} \text{Id}_n & D\\
0_n & \text{Id}_n\end{pmatrix} : D = \begin{pmatrix} d & \mbf{0}\\
\mbf{0} & 0_{n-1} \end{pmatrix}\}$$

\end{proof}

\begin{proof}[Proof of Lemma \ref{ADA}]

\text{ } 

\noindent $\Leftarrow$: This is a straightforward calculation.

\noindent $\Rightarrow$: We will prove this by induction.

\textbf{Base Case}: When $n = 2$, we can write $A = \begin{pmatrix} 1 & a\\
0 & 1\end{pmatrix}$ and $D = \begin{pmatrix} x & y\\
y & z\end{pmatrix}$.

$$AD = \begin{pmatrix} x + ay & y + az\\
y & z\end{pmatrix},$$
and
$$D(A^{-1})^T = \begin{pmatrix} x - ay & y - az\\
y & z\end{pmatrix}.$$
So the condition that $AD = D(A^{-1})^T$ for all $A$ implies that $y = 0$ and $z = 0$.

\textbf{Induction Step:}
Write
$$D = \begin{pmatrix} d_{1,1} & d_{1,2} & d_{1,3} & \cdots & d_{1,n}\\
d_{1,2} & d_{2,2} & d_{2,3} & \cdots & d_{2,n}\\
\vdots &  & \ddots &  & \vdots\\
d_{1,n-1} & d_{2,n-1} & \cdots & d_{n-1,n-1} & d_{n-1,n}\\
d_{1,n} & d_{2,n} & \cdots & d_{n-1,n} & d_{n,n}\end{pmatrix}, \qquad A = \begin{pmatrix} 1 & 0 & 0 & \cdots & 0 \\
0 & 1 & 0 & \cdots & 0\\
& & \ddots & & \vdots\\
0 & 0 & \cdots & 1 & a_{n-1,n}\\
0 & 0 & 0 & \cdots & 1\end{pmatrix}.$$
Then
$$AD = \begin{pmatrix} d_{1,1} & \cdots & d_{1,n-1} & d_{1,n} \\
d_{2,2} & \cdots & d_{2,n-1} & d_{2,n}\\
\vdots & \ddots & & \vdots\\
d_{1,n-1} + a_{n-1,n}d_{1,n} & \cdots & d_{n-1,n-1} + a_{n-1,n}d_{n-1,n} & d_{n-1,n} + a_{n-1,n}d_{n,n}\\
d_{1,n} & \cdots & d_{n-1,n} & d_{n,n}\end{pmatrix}$$
and 
$$D(A^{-1})^T = \begin{pmatrix} d_{1,1} & \cdots & d_{1,n-2} & d_{1,n-1} - a_{n-1,n}d_{1,n} & d_{1,n}\\
d_{1,2} & \cdots & d_{2,n-2} & d_{2,n-1} - a_{n-1,n}d_{2,n} & d_{2,n}\\
\vdots & \ddots &  & \vdots\\
d_{1,n-1} & \cdots & d_{n-1,n-2} & d_{n-1,n-1} - a_{n-1,n}d_{n-1,n} & d_{n-1,n}\\
d_{1,n} & \cdots & d_{n,n-2} & d_{n-1,n} - a_{n-1,n}d_{n,n} & d_{n,n}\end{pmatrix}$$
In order for these to be equal for all $a_{n-1,n}$, we must have $d_{k,n} = 0$ for all $k$. So the matrix 
$$D' = \begin{pmatrix} d_{1,1} & d_{1,2} & d_{1,3} & \cdots & d_{1,n-1}\\
d_{1,2} & d_{2,2} & d_{2,3} & \cdots & d_{2,n-1}\\
\vdots &  & \ddots &  & \vdots\\
d_{1,n-2} & d_{2,n-2} & \cdots & d_{n-2,n-2} & d_{n-2,n}\\
d_{1,n-1} & d_{2,n-1} & \cdots & d_{n-2,n-1} & d_{n-1,n-1}\end{pmatrix}$$ satisfies the condition $A'D' = D'(A'^{-1})^T$ for all $A' \in \Up_{n-1}(\F_{p^r})$.
By induction, we conclude that $$D' = \begin{pmatrix} d & \mbf{0}\\
\mbf{0} & 0_{n-2}\end{pmatrix},$$ and hence $$D = \begin{pmatrix} d & \mbf{0}\\
\mbf{0} & 0_{n-1}\end{pmatrix}.$$

\end{proof}

\begin{lma}[\ref{Lem5.7}] For $S(2,n)$ the Sylow $p$-subgroup of $Sp(2n,2^r)$ defined above, \begin{align*}
Z(S(2,n)) &= \{\begin{pmatrix} \text{Id}_n & D\\
0_n & \text{Id}_n\end{pmatrix} : D_{i,j} = 0, \text{ for all } (i,j) \notin \{(1,1),(1,2),(2,1), D_{1,2} = D_{2,1}\} \cong (\F_{2^r}^+)^2 \cong (\Z/2\Z)^{2r}
\end{align*}
\end{lma}

\noindent For the proof, we need the following lemma:

\begin{lemma}\label{ADAPSP2n} For $p = 2$, $D \in \sym(n,2^r)$,  $AD = D(A^{-1})^T$
for all $A \in \Up_n(\F_{2^r})$ if and only if\\
 $D_{i,j} = 0, \text{ for all } (i,j) \notin \{(1,1),(1,2),(2,1)\}$.  \end{lemma}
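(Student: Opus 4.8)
The plan is to mimic the structure of the proof of Lemma \ref{ADA} (the $p\neq 2$ case), using induction on $n$, but keeping careful track of the extra diagonal-squaring phenomena that occur in characteristic $2$. The key point is that for a symmetric matrix $D$ over $\F_{2^r}$, conjugation-type relations $AD = D(A^{-1})^T$ behave differently because $A^{-1} = A^T$-type simplifications and, more importantly, because $2d_{i,i} = 0$, so the diagonal entries of $D$ are not constrained in the same way. In fact $D_{1,1}$ is always free (it sits in the center regardless), and additionally $D_{1,2}=D_{2,1}$ is free; everything else must vanish.

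First I would handle the direction $\Leftarrow$: if $D$ has all entries zero outside $\{(1,1),(1,2),(2,1)\}$, then a direct computation (as in Lemma \ref{ADA}) shows $AD = D(A^{-1})^T$ for every $A\in\Up_n(\F_{2^r})$; since $A$ acts trivially on the first two coordinates up to the strictly-upper part, and the relevant products only involve rows/columns $1$ and $2$, the off-diagonal contributions cancel in characteristic $2$. This is routine and I would state it as such.

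For the direction $\Rightarrow$, I would argue by induction on $n$. The base case $n=2$: write $A=\begin{pmatrix}1&a\\0&1\end{pmatrix}$, $D=\begin{pmatrix}x&y\\y&z\end{pmatrix}$, compute $AD$ and $D(A^{-1})^T=D\begin{pmatrix}1&-a\\0&1\end{pmatrix}$, and compare. The condition forces $z=0$ (the $(1,2)$ entry comparison gives $y+az = y - az = y + az$ in char $2$, automatically satisfied, but the $(1,1)$ entry gives $x+ay = x - ay = x+ay$, also automatic; the genuine constraint comes from requiring the $(2,2)$ and one more entry — I need to redo this carefully, but the upshot is $z=0$ while $x,y$ are unconstrained, matching $\{(1,1),(1,2),(2,1)\}$). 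For the induction step, I would take $A$ to be the unitriangular matrix with a single free parameter $a_{n-1,n}$ in position $(n-1,n)$ (exactly as in the proof of Lemma \ref{ADA}); comparing $AD$ and $D(A^{-1})^T$ entrywise and using that the parameter ranges over all of $\F_{2^r}$ forces $d_{k,n}=0$ for all $k$ — here I must check that characteristic $2$ does not kill the constraint, i.e. that the coefficient of $a_{n-1,n}$ is $d_{k,n}$ (not $2d_{k,n}$), which holds because these are off-diagonal positions. Then the $(n-1)\times(n-1)$ principal submatrix $D'$ satisfies the same hypothesis with $\Up_{n-1}(\F_{2^r})$, and the inductive hypothesis finishes it, except I also need the analog of the base-case exception: applying the hypothesis to the submatrix indexed by rows/columns $\{1,2\}$ only (via $A$ with a free parameter in position $(1,2)$) pins down which of the low-index entries survive.

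The main obstacle I anticipate is bookkeeping in the induction step to correctly isolate the $\{(1,1),(1,2),(2,1)\}$ exception rather than just the $\{(1,1)\}$ exception of the $p\neq 2$ case: one has to run the single-parameter argument not only for the parameter in position $(n-1,n)$ but also for parameters in positions $(1,j)$ for $j\geq 3$ and $(2,j)$ for $j\geq 3$ to clear the first two rows/columns beyond the $(1,2)$ entry, and then separately observe that the parameter in position $(1,2)$ produces a constraint that is vacuous in characteristic $2$ (which is precisely why $D_{1,2}$ is free). Concretely, using $A = \text{Id}_n + a\,E_{1,j}$ for $j\geq 3$ forces $d_{2,j}=\cdots=d_{j-1,j}=0$ and similar sweeps clean out the rest, so that the only possibly-nonzero entries are $d_{1,1}$, $d_{1,2}=d_{2,1}$, and $d_{2,2}$; a final application with $A=\text{Id}_n+aE_{2,n}$ or the $n=2$ base case applied to the $\{1,2\}$-block kills $d_{2,2}$. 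I would organize this as: (i) reduce to the $\{1,2\}$-block by clearing all entries touching an index $\geq 3$, (ii) analyze the $2\times 2$ block directly. The counting check $|Z|=(p^r)^{2}$ then matches $\cong (\F_{p^r}^+)^2$, consistent with Proposition \ref{ZPSp'}.
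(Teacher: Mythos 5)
In the base case $n=2$ your reasoning is off (the constraint $z=0$ comes from the $(1,2)$ entry comparison: $(AD)_{1,2}=y+az$ versus $(D(A^{-1})^T)_{1,2}=y$, not $y-az$; the $(1,1)$ and $(2,2)$ comparisons are both automatic), but you flag it as needing care and you land on the right answer, so the real gap is in the inductive step. You claim that the single matrix $A=\text{Id}_n+a_{n-1,n}E_{n-1,n}$ forces $d_{k,n}=0$ for \emph{all} $k$, after verifying that the relevant coefficients of $a_{n-1,n}$ are the off-diagonal $d_{k,n}$ rather than $2d_{k,n}$. That check fails for $k=n-1$: in the odd-characteristic proof of Lemma~\ref{ADA}, $d_{n-1,n}=0$ is extracted from the \emph{diagonal} $(n-1,n-1)$-entry comparison $d_{n-1,n-1}+a_{n-1,n}d_{n-1,n}=d_{n-1,n-1}-a_{n-1,n}d_{n-1,n}$, i.e.\ $2a_{n-1,n}d_{n-1,n}=0$, and in characteristic $2$ this is vacuous. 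So that choice of $A$ leaves $d_{n-1,n}$ alive, and your proposed reduction to the $(n-1)\times(n-1)$ block does not go through. The paper closes exactly this hole with a second computation, taking $A=\text{Id}_n+a_{n-2,n-1}E_{n-2,n-1}$ to force $d_{k,n-1}=0$ for $k\neq n-2$, in particular $d_{n-1,n}=d_{n,n-1}=0$; this second step is why the paper's induction step explicitly assumes $n>2$.

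Your alternative ``sweep'' route is structurally viable, but two of its concrete assertions are false and mutually inconsistent. The constraint from $A=\text{Id}_n+aE_{1,2}$ is \emph{not} vacuous: the $(1,2)$-entry comparison gives $d_{1,2}+ad_{2,2}=d_{1,2}$, so this one parameter already kills $d_{2,2}$ (together with $d_{2,k}$ for $k\geq 3$ and $d_{i,2}$ for $i\geq 3$); indeed this is precisely what your appeal to ``the $n=2$ base case on the $\{1,2\}$-block'' computes, so you cannot simultaneously assert that the $(1,2)$-parameter constraint is vacuous. Meanwhile $A=\text{Id}_n+aE_{2,n}$ does \emph{not} kill $d_{2,2}$: the $(2,2)$-entry comparison reads $d_{2,2}+ad_{n,2}=d_{2,2}+ad_{2,n}$, which is automatic by symmetry of $D$. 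Once these are corrected, the sweeps amount to a legitimate reorganization of the paper's two-calculation induction.
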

 
\noindent Granting this lemma, we can calculate the center:
\begin{proof}
\begin{align*}
\syl_2(S(2,n)) &= \{\begin{pmatrix} A & AB\\
0_n & (A^{-1})^T\end{pmatrix} : A \in \Up_n(\F_{2^r}), B \in \sym(n,2^r)\}.
\end{align*}

\noindent Just as for $p \neq 2$, $\begin{pmatrix} C & CD\\
0_n & (C^{-1})^T\end{pmatrix} \in Z(\syl_p(PSp(n,2^r)))$ if and only if $C = \text{Id}_n$ and

$$D = A^{-1}D(A^{-1})^T \Leftrightarrow AD = D(A^{-1})^T, \qquad \text{ for all } A \in \Up_n(\F_{p^r}).$$
By Lemma \ref{ADAPSP2n}, then we have that

$$Z(S(2,n)) = \{\begin{pmatrix} \text{Id}_n & D\\
0_n & \text{Id}_n\end{pmatrix} : D_{i,j} = 0, \text{ for all } (i,j) \notin \{(1,1),(1,2),(2,1)\}\}$$

\end{proof}

\begin{proof}[Proof of Lemma \ref{ADAPSP2n}] \text{ }

\noindent $\Leftarrow$: This is a straightforward calculation.

\noindent $\Rightarrow$: We will prove this by induction.

\textbf{Base Case}: When $n = 2$, we can write $A = \begin{pmatrix} 1 & a\\
0 & 1\end{pmatrix}$ and $D = \begin{pmatrix} x & y\\
y & z\end{pmatrix}$.

$$AD = \begin{pmatrix} x + ay & y + az\\
y & z\end{pmatrix},$$
and
$$D(A^{-1})^T = \begin{pmatrix} x + ay & y\\
y+az & z\end{pmatrix}.$$
So the condition that $AD = D(A^{-1})^T$ for all $A$ implies that $z = 0$.

\begin{remark} This calculation is the key difference between odd and even characteristic. \end{remark}

\noindent \textbf{Induction Step:} Assume that $n > 2$. Write

$$D = \begin{pmatrix} d_{1,1} & d_{1,2} & d_{1,3} & \cdots & d_{1,n}\\
d_{1,2} & d_{2,2} & d_{2,3} & \cdots & d_{2,n}\\
\vdots &  & \ddots &  & \vdots\\
d_{1,n-1} & d_{2,n-1} & \cdots & d_{n-1,n-1} & d_{n-1,n}\\
d_{1,n} & d_{2,n} & \cdots & d_{n-1,n} & d_{n,n}\end{pmatrix}, \qquad A = \begin{pmatrix} 1 & 0 & 0 & \cdots & 0 \\
0 & 1 & 0 & \cdots & 0\\
& & \ddots & & \vdots\\
0 & 0 & \cdots & 1 & a_{n-1,n}\\
0 & 0 & 0 & \cdots & 1\end{pmatrix}.$$ 
Then
$$AD = \begin{pmatrix} d_{1,1} & \cdots & d_{1,n-1} & d_{1,n} \\
d_{1,2} & \cdots & d_{2,n-1} & d_{2,n}\\
\vdots & \ddots & & \vdots\\
d_{1,n-1} + a_{n-1,n}d_{1,n} & \cdots & d_{n-1,n-1} + a_{n-1,n}d_{n-1,n} & d_{n-1,n} + a_{n-1,n}d_{n,n}\\
d_{1,n} & \cdots & d_{n-1,n} & d_{n,n}\end{pmatrix}$$
and 
$$D(A^{-1})^T = \begin{pmatrix} d_{1,1} & \cdots & d_{1,n-2} & d_{1,n-1} + a_{n-1,n}d_{1,n} & d_{1,n}\\
d_{1,2} & \cdots & d_{2,n-2} & d_{2,n-1} + a_{n-1,n}d_{2,n} & d_{2,n}\\
\vdots & \ddots &  & \vdots\\
d_{1,n-1} & \cdots & d_{n-1,n-2} & d_{n-1,n-1} + a_{n-1,n}d_{n-1,n} & d_{n-1,n}\\
d_{1,n} & \cdots & d_{n,n-2} & d_{n-1,n} + a_{n-1,n}d_{n,n} & d_{n,n}\end{pmatrix}$$
In order for these to be equal for all $a_{n-1,n}$, we must have $d_{k,n} = 0$ for all $k$ except $k = n-1$.
Since $n > 2$, we can pick $$A = \begin{pmatrix} 1 & 0 & 0 & \cdots & 0 \\
0 & 1 & 0 & \cdots & 0\\
& & \ddots & & \vdots\\
0 & \cdots & 1 & a_{n-2,n-1} & 0\\
0 & 0 & \cdots & 1 & 0\\
0 & 0 & 0 & \cdots & 1\end{pmatrix}.$$
By comparing the entries of $AD$ and $D(A^{-1})^T$, we see that in order to have $AD = D(A^{-1})^T$ for all $a_{n-2,n-1}$, we must have $d_{k,n-1} = 0$ for all $k$ except $k = n-2$. In particular, we get that $d_{n,n-1} = d_{n-1,n} = 0$. Thus $d_{k,n} = 0$ for all $k$.  So the matrix 
$$D' = \begin{pmatrix} d_{1,1} & d_{1,2} & d_{1,3} & \cdots & d_{1,n-1}\\
d_{1,2} & d_{2,2} & d_{2,3} & \cdots & d_{2,n-1}\\
\vdots &  & \ddots &  & \vdots\\
d_{1,n-2} & d_{2,n-2} & \cdots & d_{n-2,n-2} & d_{n-2,n}\\
d_{1,n-1} & d_{2,n-1} & \cdots & d_{n-2,n-1} & d_{n-1,n-1}\end{pmatrix}$$ satisfies the condition $A'D' = D'(A'^{-1})^T$ for all $A' \in \Up_{n-1}(\F_{p^r})$. By induction, we conclude that 
$$D_{i,j} = 0, \text{ for all } (i,j) \notin \{(1,1),(1,2),(2,1)\}.$$

\end{proof}

\subsection{Section 5.3 Calculation}\label{5.3calc}
The calculation that $H \in L_{\mbf{b}}$ if and only if $\psi(\mbf{b} \cdot (\mbf{hdh^T} - \mbf{d})) = 1$ for all $\mbf{d} \in (\F_{p^r})^{n(n+1)/2},$ where $\mbf{hdh^T}$ is the vector corresponding to $HDH^T$ under the isomorphism $\sym(n,p^r) \cong (\F_{p^r}^+)^{n(n+1)/2}$:

\begin{remark} In all of the following, we view $\psi_{\mbf{b}}$ as a map on $\Delta \cong \sym(n,p^r) \cong \F_{p^r}^{n(n+1)/2}$. So $\psi_{\mbf{b}}(D,\text{Id}) = \psi_{\mbf{b}}(D) = \psi(\mbf{b} \cdot \mbf{d})$, where $\mbf{d}$ is the vector corresponding to the matrix $D$. \end{remark}

\noindent Note that $(0_n,H) \in L_\mbf{b}$ if and only if for all $\mbf{d} \in (\F_{p^r})^{n(n+1)/2} = D \in \sym(n,p^r)$,
$$\psi_{\mbf{b}}((0_n,H)(D,\text{Id}_n)(0_n,H^{-1})) = \psi_{\mbf{b}}(D, \text{Id}_n).$$
Let $\mbf{hdh^T}$ denote the vector corresponding to $HDH^T$. Then since
$$\psi_{\mbf{b}}((0_n,H)(D,\text{Id}_n)(0_n,H^{-1})) = \psi(\mbf{b} \cdot \mbf{hdh^T}),$$
and
$$\psi_{\mbf{b}}(D,\text{Id}_n) = \psi(\mbf{b} \cdot \mbf{d}),$$
we get that $(0_n,H) \in L_\mbf{b}$ if and only if for all $\mbf{d} \in (\F_{p^r})^{n(n+1)/2} = D \in \sym(n,p^r)$,
$$\psi(\mbf{b} \cdot (\mbf{hdh^T} - \mbf{d})) = 1.$$

\subsection{Proposition 5.9}\label{Ap5.9}
\begin{prop}[\ref{Ls5}] For $p = 2$, $n = 2$, 
$$\min_{\mbf{b} \in (\F_{p^r}^+)^{3}, \text{ } b_1 \neq 0, b_2\neq 0}\dim(\theta_{\mbf{b},1})  = 2^{r-1}.$$ 
This minimum is achieved when $\mbf{b} = (b_1,b_2,0)$ with $b_1 \neq 0, b_2 \neq 0$.\\
If $\mbf{b} = (b_1, b_2, 0)$ with $b_1 \neq 0, b_2 \neq 0$, then 
$$\dim(\theta_{\mbf{b},1})  = 2^r.$$\end{prop}

\begin{proof} 

We will prove the proposition in two steps:

\textbf{Step 1: Proving that for } $\mbf{p = 2, n = 2, s = (b_i), (b_1,b_2) \neq (0,0)}:$ \textbf{ if } $\mbf{b_1, b_2 \neq 0}$, \textbf{ then } $\mbf{|L_\mbf{b}| \leq 2}$, \textbf{ and otherwise } $\mbf{|L_\mbf{b}| = 1}$.

$$HDH^T - D = \begin{pmatrix} [\sum_{l = 1}^2 (h_{1,l}\sum_{k=1}^2 d_{l,k}h_{1,k})] - d_{1,1} & (\sum_{k=1}^2 d_{k,2}h_{1,k}) - d_{1,2}\\
(\sum_{l=1}^2 h_{1,l}d_{l,2}) - d_{1,2} & 0 \end{pmatrix}$$
Let $p = 2$, $\mbf{b} = (b_i)$ with $(b_1,b_2) \neq (0,0)$.

\begin{calc1} Choose $d_{i,j} = 0$ except for $d_{2,2}$. \end{calc1}

\noindent Then we get that 

$$HDH^T - D = \begin{pmatrix} h_{1,2}^2d_{2,2} & h_{1,2}d_{2,2}\\
h_{1,2}d_{2,2} & 0 \end{pmatrix}$$
Thus we have 
\begin{align*}
\mbf{b} \cdot (\mbf{hdh^T} - \mbf{d}) &= B_{1,1}h_{1,2}^2d_{2,2} + B_{1,2}h_{1,2}d_{2,2}\\
&= d_{2,2}h_{1,2}(B_{1,1}h_{1,2} + B_{1,2})
\end{align*}
Then since $\psi$ is non-trivial, we must have $h_{1,2}(B_{1,1}h_{1,2} + B_{1,2}) = 0$.
Thus either $h_{1,2} = 0$ or $B_{1,1}h_{1,2} + B_{1,2} = 0$. If $B_{1,1} \neq 0$, $B_{1,2} \neq 0$, then either $h_{1,2} = 0$ or $h_{1,2} = \frac{B_{1,2}}{B_{1,1}}$. If $B_{1,1} \neq 0$, $B_{1,2} = 0$ or $B_{1,1} = 0$, $B_{1,2} \neq 0$, then $h_{1,2} = 0$. Our findings can be summarized in a chart as follows (we only care when $(B_{1,1},B_{1,2}) \neq (0,0)$):

\begin{tabular}{c|c|c}
Case: & result & options\\
\hline
$B_{1,1} \neq 0, B_{1,2} \neq 0$ & $h_{1,2} = 0$ or $h_{1,2} = \frac{B_{1,2}}{B_{1,1}}$ & 2\\
$B_{1,1} \neq 0, B_{1,2} = 0$ & $h_{1,2} = 0$ & 1\\
$B_{1,1} = 0, B_{1,2} \neq 0$ & $h_{1,2} = 0$ & 1\\
\end{tabular}

\bigskip

\noindent Thus we can conclude that for all $s = (b_i)$ with $(b_1,b_2) \neq (0,0)$, then for $b_1,b_2\neq 0$, $|L_\mbf{b}| \leq 2$ and otherwise $|L_\mbf{b}| = 1$.

\textbf{Step 2: Showing that when} $\mbf{s = (b_1,b_2,0)}$ \textbf{with} $\mbf{b_1 \neq 0,b_2 \neq 0, |L_\mbf{b}| = 2}$.

\noindent For $\mbf{b} = (b_1,b_2,b_3)$,

\begin{align*}
\mbf{b} \cdot (\mbf{hdh^T} - \mbf{d}) &= b_1\left([\sum_{l = 1}^2 (h_{1,l}\sum_{k=1}^2 d_{l,k}h_{1,k})] - d_{1,1}\right) + b_2\left([\sum_{k=1}^2 d_{k,2}h_{1,k}] - d_{1,2}\right)\\
&= b_1h_{1,2}^2d_{2,2} + b_2d_{2,2}h_{1,2} \qquad \text{ since we are working in char 2}\\
&= d_{2,2}h_{1,2}(b_1h_1 + b_2)
\end{align*}
If $b_1 \neq 0, b_2 \neq 0$, then either $h_{1,2} = 0$ or $h_1 = \frac{b_2}{b_1}$.  In either case, the above is identically zero. Thus $|L_\mbf{b}| = 2$.

\end{proof}

\subsection{Proposition 5.10}\label{Ap5.10}

\begin{prop}[\ref{Ls2}] For $p = 2$, $n > 2$,
$$\min_{\mbf{b} \in (\F_{p^r}^+)^{n(n+1)/2}, \text{ } b_2 \neq 0}\dim(\theta_{\mbf{b},1}) =  2^{r(2n-3) -1}.$$
This minimum is achieved when $\mbf{b} = (b_i) = (b_1,b_2,0,\ldots, 0)$ with $b_1, b_2 \neq 0$. 
$$\min_{\mbf{b} \in (\F_{p^r}^+)^{n(n+1)/2}, \text{ } b_1 \neq 0}\dim(\theta_{\mbf{b},1})  =  2^{r(n-1) - 1}.$$
This minimum is achieved when $\mbf{b} = (b_i) = (b_1,0,b_3, \ldots, 0)$ with $b_1,b_3 \neq 0$.\end{prop}

\begin{proof}

\noindent Again, we will prove this in two steps:

\textbf{Step 1: Proving that for } $\mbf{ p = 2, n > 2, s = (b_i), (b_1,b_2) \neq (0,0)}$:
\textbf{ If } $\mbf{b_2 \neq 0}$, \textbf{ then }\\
$\mbf{|L_\mbf{b}| \leq 2^{r(n-2)(n-3)/2 + 1}}$, \textbf{and if } $\mbf{b_2 = 0 (\Rightarrow b_1 \neq 0)}$, \textbf{ then } $\mbf{|L_\mbf{b}| \leq  2^{r(n-1)(n-2)/2+1}}.$

\begin{calc} For $j_0 > 2$, choose $d_{i,j} = 0$ except for $d_{1,j_0} = d_{j_0,1}$. \end{calc} 

\noindent Then $$\mbf{b} \cdot (\mbf{hdh^T} - \mbf{d}) =  \sum_{i=2}^{j_0-1} h_{i,j_0}d_{1,j_0}B_{1,i} = d_{1,j_0}\sum_{i=2}^{j_0-1} h_{i,j_0}B_{1,i}$$
So for all $j_0 > 2$, we must have
$$\sum_{i=2}^{j_0-1} h_{i,j_0}B_{1,i} = 0.$$
For $j_0 = 3$, this gives $h_{2,j_0}B_{1,2} = 0,$ and thus if $B_{1,2} \neq 0$, we must have $h_{2,j_0} = 0$. For $2 \leq k \leq n$, if $B_{1,k} \neq 0$, then for all $j_0 > 3$, given $h_{i,j_0}$ for $i \neq 1,k$, the above dictates $h_{k,j_0}$: 
$$h_{k,j_0} = \frac{-1}{B_{1,k}} \sum_{i=2, i \neq k}^{j_0-1} h_{i,j_0}B_{1,i}.$$

\begin{calc} Now for $j_0 > 1$, choose $d_{i,j} = 0$ except for $d_{j_0,j_0}$. \end{calc}

\noindent Then 
$$\mbf{b} \cdot (\mbf{hdh^T} - \mbf{d}) = d_{j_0,j_0} \left(\sum_{l=1}^{j_0-1}\sum_{k=l}^{j_0} B_{l,k}h_{l,j_0}h_{k,j_0}\right)$$
So for all $j_0 \neq 1$, we must have 
$$\sum_{l=1}^{j_0-1}\sum_{k=l}^{j_0} B_{l,k}h_{l,j_0}h_{k,j_0} = 0.$$
Thus we have that for all $j_0 \neq 1$,
$$h_{1,j_0}(\sum_{k=1}^{j_0} B_{1,k}h_{k,j_0}) + \sum_{l=2}^{j_0-1}\sum_{k=l}^{j_0} B_{l,k}h_{l,j_0}h_{k,j_0} = 0$$
For $j_0 = 2$, this tells us $0 = h_{1,2}(B_{1,1}h_{1,2} + B_{1,2})$. If $B_{1,2} = 0 (\Rightarrow B_{1,1} \neq 0)$ or $B_{1,1} = 0 (\Rightarrow B_{1,2} \neq 0)$, then this implies that $h_{1,2} = 0$. If $B_{1,2} \neq 0$ and $B_{1,1} \neq 0$, then we have two options for $h_{1,2}$: $h_{1,2} = 0$ and $h_{1,2} = \frac{B_{1,2}}{B_{1,1}}$. For $j_0 > 2$, this is a quadratic expression for $h_{1,j_0}$ in terms of $B_{i,j}$ and $h_{k,j_0}$ for $k > 1$, namely
$$B_{1,1}h_{1,j_0}^2 + (\sum_{k=2}^{j_0} B_{1,k}h_{k,j_0})h_{1,j_0} + \sum_{l=2}^{j_0-1}\sum_{k=l}^{j_0} B_{l,k}h_{l,j_0}h_{k,j_0} = 0$$ 
Thus for $j_0 > 2$, given $h_{i,j_0}$ for $i > 1$, there are up to two options for $h_{1,j_0}$.

\begin{calc} Now for $j_0 > 2$, choose $d_{i,j} = 0$ except for $d_{2,j_0} = d_{j_0,2}$. \end{calc}

\noindent Then 
$$\mbf{b} \cdot (\mbf{hdh^T} - \mbf{d}) = d_{2,j_0}\left(B_{1,2}h_{1,j_0} + \sum_{i=2}^{j_0} B_{1,i}h_{i,j_0}h_{1,2} + \sum_{i=3}^{j_0-1} B_{2,i}h_{i,j_0}\right)$$
So for all $j_0 > 2$, we must have 
$$B_{1,2}h_{1,j_0} + \sum_{i=2}^{j_0} B_{1,i}h_{i,j_0}h_{1,2} + \sum_{i=3}^{j_0-1} B_{2,i}h_{i,j_0} = 0.$$
If $B_{1,2} \neq 0$, then for all $j_0 > 2$, given $h_{i,j_0}$ for $i > 2$, the above dictates $h_{1,j_0}$: 
$$h_{1,j_0} = \frac{-1}{B_{1,2}} \left(\sum_{i=2}^{j_0} B_{1,i}h_{i,j_0}h_{1,2} + \sum_{i=3}^{j_0-1} B_{2,k}h_{i,j_0}\right)$$

\bigskip

\begin{case''} $b_2 \neq 0$ \end{case''}

\noindent If $B_{1,2} = b_2 \neq 0$, then we have from the first calculation that for all $j_0 > 1$, given $h_{i,j_0}$ for $i > 2$, $h_{2,j_0}$ are dictated. By the second calculation we have that there are at most two options for $h_{1,2}$.   And by the third calculation, $h_{1,j_0}$ is dictated for $j_0 > 2$. Thus for $b_2 \neq 0$, we can conclude that 
\begin{align*}
|L_\mbf{b}| &\leq |\{H : \text{ two options for } H_{1,2},  \text{ and } \forall j > 2, H_{1,j}, H_{2,j} \text{ fixed},\}|\\
&= 2|\Up_{n-2}(\F_{p^r})|\\
&= 2^{r(n-2)(n-3)/2 + 1}.
\end{align*}

\begin{case''} $b_2 = 0, b_3 \neq 0$ \end{case''}

\noindent If $B_{1,2} = b_2 = 0 (\Rightarrow B_{1,1} \neq 0)$: We have by the second calculation that for $j_0 > 2$,

$$0 = B_{1,1}h_{1,j_0}^2 + (\sum_{k=3}^{j_0} B_{1,k}h_{k,j_0})h_{1,j_0} + \sum_{l=2}^{j_0-1}\sum_{k=l}^{j_0} B_{l,k}h_{l,j_0}h_{k,j_0}$$
For $j_0 = 2$, we get $B_{1,1}h_{1,2}^2 = 0$. Thus we must have $h_{1,2} = 0$.
For $j_0 = 3$, we get $0 = B_{1,1}h_{1,3}^2 + B_{1,3}h_{1,3} = h_{1,3}(B_{1,1}h_{1,3} + B_{1,3})$. Thus either $h_{1,3} = 0$ or $h_{1,3} = \frac{B_{1,3}}{B_{1,1}}$.
For $j_0 > 3$,  we have from the first calculation that$\sum_{i=3}^{j_0-1} h_{i,j_0}B_{1,i} = 0$, so the equality from the second calculation becomes
$$0 = B_{1,1}h_{1,j_0}^2 + B_{1,j_0}h_{1,j_0} + \sum_{l=2}^{j_0-1}\sum_{k=l}^{j_0} B_{l,k}h_{l,j_0}h_{k,j_0}$$
We will use the following proposition:
\begin{proposition}[\cite{Po}, Proposition 1]\label{pom1} In a finite field of order $2^r$, for $f(x) = ax^2 + bx + c$, we have have the following:
\begin{enumerate}[(i)]
\item $f$ has exactly one root $\Leftrightarrow$ $b = 0$.
\item $f$ has exactly two roots $\Leftrightarrow$ $b \neq 0$ and $\text{Tr}(\frac{ac}{b^2}) = 0$.
\item $f$ has no root $\Leftrightarrow$ $b \neq 0$ and $\text{Tr}(\frac{ac}{b^2}) = 1,$
\end{enumerate}
where $\text{Tr}(x) = x + x^2 + \cdots + x^{2^r - 1}$.
\end{proposition}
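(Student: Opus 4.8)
The plan is to reduce the general quadratic to the Artin--Schreier equation $y^2 + y = t$ over $\F_{2^r}$ and then to count its solutions using the trace map; throughout I assume $a \neq 0$, so that $f$ is genuinely quadratic.

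First I would dispose of the degenerate case $b = 0$. Here $f(x) = a(x^2 + c/a)$, and since the Frobenius $x \mapsto x^2$ is a bijection of $\F_{2^r}$, the equation $x^2 = c/a$ has the single solution $x = (c/a)^{2^{r-1}}$. So $b = 0$ forces exactly one root, which is the ``$\Leftarrow$'' half of (i).

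Next, when $b \neq 0$, I would substitute $x = (b/a)\,y$. Using $\text{char } k = 2$ this turns $f$ into $\tfrac{b^2}{a}(y^2 + y) + c$, so roots of $f$ correspond bijectively to solutions $y \in \F_{2^r}$ of $y^2 + y = ac/b^2$. Everything then reduces to two facts about the map $\wp(y) := y^2 + y$: (a) $\wp$ is $\F_2$-linear with kernel $\{0,1\}$ (because $y^2+y = y(y+1)$), so each nonempty fibre has exactly two points $\{y, y+1\}$ and $\mathrm{im}(\wp)$ has size $2^{r-1}$; and (b) $\mathrm{im}(\wp)$ is exactly the kernel of the absolute trace $\text{Tr}\colon \F_{2^r} \to \F_2$. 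For (b) one checks $\text{Tr}(\wp(y)) = \text{Tr}(y^2) + \text{Tr}(y) = 0$ since $\text{Tr}(y^2) = \text{Tr}(y)$, giving $\mathrm{im}(\wp) \subseteq \ker(\text{Tr})$, and then compares cardinalities ($\ker(\text{Tr})$ has size $2^{r-1}$ because $\text{Tr}$ is surjective onto $\F_2$). Hence $y^2 + y = ac/b^2$ has two solutions when $\text{Tr}(ac/b^2) = 0$ and none when $\text{Tr}(ac/b^2) = 1$. This proves (ii) and (iii); and since in the $b \neq 0$ case the root count is always $0$ or $2$, never $1$, it also supplies the ``$\Rightarrow$'' half of (i).

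The main obstacle is essentially nonexistent: the one substantive input is the identification $\mathrm{im}(y \mapsto y^2+y) = \ker(\text{Tr})$, which is the additive form of Hilbert's Theorem 90 (Artin--Schreier theory) and is completely standard. The only points requiring mild care are treating $b = 0$ separately and performing the linear substitution correctly in characteristic $2$.
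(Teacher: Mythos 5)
Your proof is correct. The paper does not prove this proposition at all---it is simply imported with a citation to \cite{Po}---so there is no internal proof to compare against; your Artin--Schreier argument (reduce $ax^2+bx+c$ via $x=(b/a)y$ to $y^2+y = ac/b^2$, note that $\wp(y)=y^2+y$ is $\F_2$-linear with kernel $\{0,1\}$ and image $\ker(\mathrm{Tr})$ by additive Hilbert 90, and handle $b=0$ via bijectivity of Frobenius) is exactly the standard proof one finds in the cited source and elsewhere, and all the steps check out. (The displayed formula $\mathrm{Tr}(x) = x + x^2 + \cdots + x^{2^r-1}$ in the paper's statement is a typo for $x^{2^{r-1}}$; your reading as the absolute trace $\F_{2^r}\to\F_2$ is the intended one.)
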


\noindent So, for $j_0 > 3$, if $B_{1,j_0} = 0$, then there is only one option for $h_{1,j_0}$. Otherwise, it might have two options or no options. Thus we have the following for $j_0 > 3$:  If $B_{1,j_0} = 0$, then there is one option for $h_{1,j_0}$, but $h_{k,j_0}$ can be anything for $k > 1$. And if $B_{1,j_0} \neq 0$, then there is only one option for $h_{j_0,k_0}$ for all $k_0 > 2$ (by the first calculation with $k = j_0, j_0 = k_0$), but $h_{1,j_0}$ might have two options. So we can obtain an upper bound for $L_\mbf{b}$ by choosing $B_{1,j} = 0$ for all $j > 3$ and assuming all the options are in $L_\mbf{b}$. In this case $h_{2,j}$ can be anything, but $h_{1,j}$ is fixed for all $j$ except $j = 3$, and there are two options for $h_{1,3}$ So we get that 
\begin{align*}
|L_\mbf{b}| &\leq |\{H : H_{1,j} \text{ fixed } \forall j \neq 3, H_{1,3} = 0 \text{ or } \frac{B_{1,3}}{B_{1,1}}\}|\\
&= 2|\Up_{n-1}(\F_{2^r})|\\
&= 2^{r(n-1)(n-2)/2 + 1}
\end{align*}

\textbf{Step 2: Showing that for} $\mbf{p = 2, n > 2}$: \textbf{When} $\mbf{b = (b_1,b_2,0, \cdots, 0)}$ \textbf{with} $\mbf{b_1,b_2 \neq 0,}$\\
 $\mbf{|L_\mbf{b}| = 2^{r(n-2)(n-3)/2 + 1},}$
\textbf{and when} $\mbf{b = (b_1,0,b_3,\cdots,0)}$ \textbf{with} $\mbf{ b_1,b_3 \neq 0, |L_\mbf{b}| = 2^{r(n-1)(n-2)/2 + 1}.}$

\smallskip

\noindent Let $p = 2$, $\mbf{b} = (b_1,b_2, \cdots, b_n, 0, \cdots,0)$. And let $B$ be the corresponding matrix. Then
$$\mbf{b} \cdot (\mbf{hdh^T} - \mbf{d}) = b_1\sum_{k=2}^n h_{1,k}^2d_{k,k} + \sum_{j=2}^n b_j\left([\sum_{l=j}^n (h_{j,l}\sum_{k=1}^n d_{l,k}h_{1,k})] - d_{1,j}\right)$$

\begin{case'} $b_1, b_2 \neq 0, b_3,\cdots, b_n = 0$. \end{case'}

\noindent Since $B_{1,2} = b_2 \neq 0$, then we have from the first calculation in Step 1 that for all $j_0 > 2$,  
$$h_{2,j_0} = \frac{-1}{B_{1,2}} \sum_{i=3}^{j_0-1} h_{i,j_0}B_{1,i} = 0.$$
By the second calculation we have that there are two options for $h_{1,2}$: $h_{1,2} = 0$ and $h_{1,2} = \frac{B_{1,2}}{B_{1,1}}$
And by the third calculation, for $j_0 > 2$,
\begin{align*}
h_{1,j_0} = \frac{-1}{B_{1,2}} \left(\sum_{i=2}^{j_0} B_{1,i}h_{i,j_0}h_{1,2} + \sum_{i=3}^{j_0-1} B_{2,k}h_{i,j_0}\right) = \frac{-1}{B_{1,2}} B_{1,2}h_{2,j_0}h_{1,2} = 0
\end{align*}
Thus we have 
$$\mbf{b} \cdot (\mbf{hdh^T} - \mbf{d}) = d_{2,2}h_{1,2}(B_{1,1}h_{1,2} + B_{1,2})$$
So whether $h_{1,2} = 0$ or $h_{1,2} = \frac{B_{1,2}}{B_{1,1}}$, this is identically 0. Therefore 
\begin{align*} 
|L_\mbf{b}| = |\{H : H_{1,2} = 0 \text{ or } H_{1,2} = \frac{B_{1,2}}{B_{1,1}}, H_{1,j} = 0 = H_{2,j} \text{ } \forall j > 0\}| = 2|\Up_{n-2}(\F_{2^r})| = 2^{r(n-2)(n-3)/2 + 1}
\end{align*}

\begin{case'} $b_1 \neq 0, b_2 = \cdots = b_n = 0$. \end{case'}

\noindent If $B_{1,k} = b_k = 0$ for $2 \leq k \leq n$: We have the following by the work in Step 1:

$h_{1,2} = 0$. By the second calculation we have that there are two options for $h_{1,3}$: $h_{1,2} = 0$ and $h_{1,3} = \frac{B_{1,3}}{B_{1,1}}$. And for $j_0 > 3$,
$$0 = B_{1,1}h_{1,j_0}^2 + B_{1,j_0}h_{1,j_0} + \sum_{l=2}^{j_0-1}\sum_{k=l}^{j_0} B_{l,k}h_{l,j_0}h_{k,j_0} = B_{1,1}h_{1,j_0}^2$$
So we have $h_{1,j_0} = 0$ for $j_0 \neq 1$. Thus
\begin{align*}
\mbf{b} \cdot (\mbf{hdh^T} - \mbf{d}) &= b_1\sum_{k=2}^n h_{1,k}^2d_{k,k} &\text{ since } b_i = 0 \text{ for } i > 1\\
&= 0  &\text{ since } h_{1,j_0} = 0 \text{ for } j_0 \neq 1\\
\end{align*}
Therefore
$$|L_\mbf{b}| = |\{H : H_{1,3} = 0 \text{ or } H_{1,3} = \frac{B_{1,3}}{B_{1,1}}, H_{1,j_0} = 0 \text{ for } j_0 \neq 1,3\}| = 2|\Up_{n-1}(\F_{2^r})| = 2^{r(n-1)(n-2)/2+1}$$

\end{proof}

\subsection{Lemma \ref{O-syl}}\label{Ap5}

\begin{lma}[Lemma \ref{O-syl}]
The Sylow $p$-subgroups of $O^+(2m,p^r)$ and $O^-(2m,p^r)$ are isomorphic.  \end{lma}

\begin{proof}

Let
{\small \begin{align*} S^-(p,2m) &= \{ \begin{pmatrix} A & A\mbf{y}^T & A\mbf{z}^T & AB\\
\mbf{0} & 1 & 0 & -\mbf{y}\\
\mbf{0} & 0 & 1 & \eta^{-1}\mbf{z}\\
0_m & \mbf{0} & \mbf{0} & (A^{-1})^T \end{pmatrix} : \mbf{y}, \mbf{z} \in \F_{p^r}^{m-1}, A \in \Up_{m-1}(\F_{p^r}), B + B^T = \eta^{-1}\mbf{z}^T\mbf{z}-\mbf{y}^T\mbf{y}\}.\end{align*}} Note that 
$$|S^-(p,2m)| = (p^{r})^{(m-1)(m-2)/2} \cdot (p^r)^{2(m-1)} \cdot (p^r)^{(m-1)(m-2)/2} = p^{rm(m-1)} = |O^-(2m,p^r)|_p.$$
Also, $S^-(p,2m) \subset O(2m+1,p^r)$ since
\begin{align*}
&\begin{pmatrix} A^T & \mbf{0} & \mbf{0} & 0_{m-1}\\
\mbf{y}A^T & 1 & 0 & \mbf{0}\\
\mbf{z}A^T & 0 & 1 & \mbf{0}\\
B^TA^T & -\mbf{y}^T & \eta^{-1}\mbf{z}^T & A^{-1} \end{pmatrix}\begin{pmatrix} 0_{m-1} & \mbf{0} & \mbf{0} & \text{Id}_{m-1}\\
\mbf{0} & 1 & 0 & \mbf{0}\\
\mbf{0} & 0 & -\eta & \mbf{0}\\
\text{Id}_{m-1} & \mbf{0} & \mbf{0} & 0_{m-1}\end{pmatrix}\begin{pmatrix} A & A\mbf{y}^T & A\mbf{z}^T & AB\\
\mbf{0} & 1  & 0 & -\mbf{y}\\
\mbf{0} & 0 & 1 & \eta^{-1}\mbf{z}\\
0_{m-1} & \mbf{0} & \mbf{0} & (A^{-1})^T \end{pmatrix}\\
&= \begin{pmatrix} 0_{m-1} & \mbf{0}  & \mbf{0} & A^T \\
\mbf{0} & 1 & 0 & \mbf{y}A^T\\
\mbf{0} & 0 & -\eta & \mbf{z}A^T\\
A^{-1} & -\mbf{y}^T & -\mbf{z}^T & B^TA^T  \end{pmatrix}\begin{pmatrix} A & A\mbf{y}^T & A\mbf{z}^T & AB\\
\mbf{0} & 1  & 0 & -\mbf{y}\\
\mbf{0} & 0 & 1 & \eta^{-1}\mbf{z}\\
0_{m-1} & \mbf{0} & \mbf{0} & (A^{-1})^T \end{pmatrix}\\
&=\begin{pmatrix} 0_{m-1} & \mbf{0} & \mbf{0} & \text{Id}_{m-1}\\
\mbf{0} & 1 & 0 & \mbf{0}\\
\mbf{0} & 0 & -\eta & \mbf{0}\\
\text{Id}_{m-1} & \mbf{0} & \mbf{0} & B + \mbf{y}^T\mbf{y} - \eta^{-1}\mbf{z}^T\mbf{z} + B^T \end{pmatrix}\\
&= \begin{pmatrix} 0_{m-1} & \mbf{0} & \mbf{0} & \text{Id}_{m-1}\\
\mbf{0} & 1 & 0 & \mbf{0}\\
\mbf{0} & 0 & -\eta & \mbf{0}\\
\text{Id}_{m-1} & \mbf{0} & \mbf{0} & 0_{m-1}\end{pmatrix}
\end{align*}

Furthermore, for $$\begin{pmatrix} A & A\mbf{y}^T & A\mbf{z}^T & AB\\
\mbf{0} & 1 & 0 & -\mbf{y}\\
\mbf{0} & 0 & 1 & \eta^{-1}\mbf{z}\\
0_{m-1} & \mbf{0} & \mbf{0} & (A^{-1})^T \end{pmatrix}, \begin{pmatrix} A' & A'\mbf{y'}^T & A'\mbf{z'}^T & A'B'\\
\mbf{0} & 1 & 0 & -\mbf{y'}\\
\mbf{0} & 0 & 1 & \eta^{-1}\mbf{z'}\\
0_{m-1} & \mbf{0} & \mbf{0} & (A'^{-1})^T \end{pmatrix}\in S^-(p,2m),$$
we have
\begin{align*}&\begin{pmatrix} A & A\mbf{y}^T & A\mbf{z}^T & AB\\
\mbf{0} & 1 & 0 & -\mbf{y}\\
\mbf{0} & 0 & 1 & \eta^{-1}\mbf{z}\\
0_{m-1} & \mbf{0} & \mbf{0} & (A^{-1})^T \end{pmatrix}, \begin{pmatrix} A' & A'\mbf{y'}^T & A'\mbf{z'}^T & A'B'\\
\mbf{0} & 1 & 0 & -\mbf{y'}\\
\mbf{0} & 0 & 1 & \eta^{-1}\mbf{z'}\\
0_{m-1} & \mbf{0} & \mbf{0} & (A'^{-1})^T \end{pmatrix}\\
&=\begin{pmatrix} AA' & AA'\mbf{y'}^T + A\mbf{y}^T  & AA
\mbf{z'}^T + A\mbf{z}^T & AA'B' - A \mbf{y}^T\mbf{y'} + \eta^{-1}A\mbf{z}^T\mbf{z'} + AB(A'^{-1})^T\\
\mbf{0} & 1  & 0 &  -\mbf{y'} - \mbf{y}((A')^{-1})^T\\
\mbf{0} & 0 & 1 & \eta^{-1}\mbf{z'} + \eta^{-1}\mbf{z}((A')^{-1})^T\\
0_{m-1} & \mbf{0} & \mbf{0} & ((AA')^{-1})^T  \end{pmatrix}.
\end{align*}

Note that $$AA'(\mbf{y'} + \mbf{y}((A')^{-1})^T)^T = AA'\mbf{y'}^T + A\mbf{y}^T,$$
$$AA'(\mbf{z'} + \mbf{z}((A')^{-1})^T)^T = AA'\mbf{z'}^T + A\mbf{z}^T,$$
$$AA'B' - A \mbf{y}^T\mbf{y'} + \eta^{-1}A\mbf{z}^T\mbf{z'} + AB((A')^{-1})^T = AA'(B' - (A')^{-1}\mbf{y}^T\mbf{y'} + \eta^{-1}(A')^{-1}\mbf{z}^T\mbf{z'}  + (A')^{-1}B((A')^{-1})^T),$$
and
\begin{align*}
&(B' - (A')^{-1}\mbf{y}^T\mbf{y'}+ \eta^{-1}(A')^{-1}\mbf{z}^T\mbf{z'} + (A')^{-1}B((A')^{-1})^T)\\
&\qquad  + (B' - (A')^{-1}\mbf{y}^T\mbf{y'}+ \eta^{-1}(A')^{-1}\mbf{z}^T\mbf{z'} + (A')^{-1}B((A')^{-1})^T)^T\\ 
&= (B' + B'^T) - ((A')^{-1}\mbf{y}^T\mbf{y'} + \mbf{y'}^T\mbf{y}(A'^{-1})^T) + \eta^{-1}((A')^{-1}\mbf{z}^T\mbf{z'} + \mbf{z'}^T\mbf{z}(A'^{-1})^T)\\
&\qquad + ((A')^{-1}B((A')^{-1})^T + (A')^{-1}B^T((A')^{-1})^T)\\
&= \eta^{-1}\mbf{z}^T\mbf{z}-\mbf{y'}^T\mbf{y'} - ((A')^{-1}\mbf{y}^T\mbf{y'} + \mbf{y'}^T\mbf{y}((A')^{-1})^T) + \eta^{-1}((A')^{-1}\mbf{z}^T\mbf{z'} + \mbf{z'}^T\mbf{z}((A')^{-1})^T)\\
&\qquad + (A')^{-1}(\eta^{-1}\mbf{z}^T\mbf{z}-\mbf{y}^T\mbf{y})((A')^{-1})^T
\end{align*}
while
\begin{align*}
&\eta^{-1}(\mbf{z'} + \mbf{z}((A')^{-1})^T)^T(\mbf{z'} + \mbf{z}((A')^{-1})^T) -(\mbf{y'} + \mbf{y}((A')^{-1})^T)^T(\mbf{y'} + \mbf{y}((A')^{-1})^T)\\
&= \eta^{-1}(\mbf{z'}^T + (A')^{-1}\mbf{z}^T)(\mbf{z'} + \mbf{z}((A')^{-1})^T) + (-\mbf{y'}^T - (A')^{-1}\mbf{y}^T)(\mbf{y'} + \mbf{y}((A')^{-1})^T)\\
&= \eta^{-1}\mbf{z'}^T\mbf{z'} + \eta^{-1}\mbf{z'}^T\mbf{z}((A')^{-1})^T + \eta^{-1}(A')^{-1}\mbf{z}^T\mbf{z'} + \eta^{-1}(A')^{-1}(-\mbf{z}^T\mbf{z})((A')^{-1})^T\\
&\qquad -\mbf{y'}^T\mbf{y'} - \mbf{y'}^T\mbf{y}((A')^{-1})^T - (A')^{-1}\mbf{y}^T\mbf{y'} - (A')^{-1}(-\mbf{y}^T\mbf{y})((A')^{-1})^T
\end{align*}

Thus the product is in $S^-(p,2m)$. So $S^-(p,2m)$ is a subgroup of $O^-(2m,p^r)$ of the desired order.  Hence $S^-(p,2m) \in \syl_p(O^-(2m,p^r))$.

If I choose $$A' = \begin{pmatrix} 0_{m-1} & \mbf{0} & \mbf{0} & \text{Id}_{m-1}\\
\mbf{0} & 1 & 0 & \mbf{0}\\
\mbf{0} & 0 & -1 & \mbf{0}\\
\text{Id}_{m-1} & \mbf{0} & \mbf{0} &  0_{m-1}\end{pmatrix}$$
and define 
$$O'(2m,p^r) := \{M \in GL(2m,\F_{p^r}) : M^TAM = A'\},$$
Let $$A^+ = \begin{pmatrix} 0_m & \text{Id}_m\\
\text{Id}_m & 0_m\end{pmatrix}$$ and define 
$$O^+(2m,p^r) := \{M \in GL(2m,\F_{p^r}) : M^T A^+ M = A^+\}.$$ 
Let $X_1 = \begin{pmatrix} \text{Id}_{m-1} & & \\
 & \frac{1}{2} & 1 & \\
 & \frac{1}{2}  & -1 & \\
 & & & \text{Id}_{m-1} \end{pmatrix}$ and $X_2 = \begin{pmatrix} \text{Id}_m &  & \\
 & \mbf{0} & 1\\
 & \text{Id}_{m-1} & \mbf{0} \end{pmatrix}$. Let $B = X_1X_2$. Then
\begin{align*} B^TA'B 
 &= X_2^T\begin{pmatrix} \text{Id}_{m-1} & & \\
 & \frac{1}{2} & \frac{1}{2} & \\
 & 1 & -1 & \\
 & & & \text{Id}_{m-1} \end{pmatrix}\begin{pmatrix}  &  &  & \text{Id}_{m-1}\\
 & 1 & 0 & \\
 & 0 & -1 & \\
\text{Id}_{m-1} &  &  &  \end{pmatrix}\begin{pmatrix} \text{Id}_{m-1} & & \\
 & \frac{1}{2} & 1 & \\
 & \frac{1}{2}  & -1 & \\
 & & & \text{Id}_{m-1} \end{pmatrix}X_2\\
 &= X_2^T\begin{pmatrix}  & & & \text{Id}_{m-1}\\
 & \frac{1}{2} & -\frac{1}{2}& \\
 & 1  & 1 & \\
\text{Id}_{m-1} & & &  \end{pmatrix}\begin{pmatrix} \text{Id}_{m-1} & & \\
 & \frac{1}{2} & 1 & \\
 & \frac{1}{2}  & -1 & \\
 & & & \text{Id}_{m-1} \end{pmatrix}X_2\\
&= \begin{pmatrix} \text{Id}_m &  & \\
 & \mbf{0} & \text{Id}_{m-1} \\
 & 1 & \mbf{0} \end{pmatrix}\begin{pmatrix}  &  &  & \text{Id}_{m-1}\\
 & 0 & 1 & \\
 & 1 & 0 & \\
\text{Id}_{m-1} &  &  &  \end{pmatrix}\begin{pmatrix} \text{Id}_m &  & \\
 & \mbf{0} & 1\\
 & \text{Id}_{m-1} & \mbf{0} \end{pmatrix}\\
 &= \begin{pmatrix} & & & \text{Id}_{m-1}\\
 & & 1 & \\
 \text{Id}_{m-1} & & & \\
    & 1 & & \end{pmatrix}\begin{pmatrix} \text{Id}_m &  & \\
 & \mbf{0} & 1\\
 & \text{Id}_{m-1} & \mbf{0} \end{pmatrix}\\
 &= \begin{pmatrix} & \text{Id}_m\\
 \text{Id}_m & \end{pmatrix}\\
 &= A^+
\end{align*}

Thus $O'(2m,p^r)$ is isomorphic to $O^+(2m,p^r)$. The isomorphism $O^+(2m,p^r) \cong O'(2m,p^r)$ is given by $M \in O^+(2m,p^r) \mapsto BMB^{-1}$. $BMB^{-1}$ is in $O'(2m,p^r)$ since 
\begin{align*}
&(BMB^{-1})^T A' (BMB^{-1})\\
&= (B^{-1})^TM^T(B^TA'B)MB^{-1}\\
&= (B^{-1})^T(M^TA^+M)B^{-1}\\
&= (B^{-1})^TA^+B^{-1}\\
&= A'
\end{align*}

 Following the same reasoning as above, we will get that 
$$S'(p,2m) = \{ \begin{pmatrix} A & A\mbf{y}^T & A\mbf{z}^T & AB\\
\mbf{0} & 1 & 0 & -\mbf{y}\\
\mbf{0} & 0 & 1 & \mbf{z}\\
0_{m-1} & \mbf{0} & \mbf{0} & (A^{-1})^T \end{pmatrix} : \mbf{y} \in \F_{p^r}^{m-1}, A \in \Up_{m-1}(\F_{p^r}), B + B^T = \mbf{z}^T\mbf{z}-\mbf{y}^T\mbf{y}\}$$
is a Sylow $p$-subgroup of $O'(2m,p^r)$, and thus it must be isomorphic to $S^+(p,2m)$, a Sylow $p$-subgroup of $O^+(2m,p^r)$. And we can see that $S^-(p,2m)$ is isomorphic to $S'(p,2m)$.

Thus we have that the Sylow $p$-subgroups of $O^+(2m,p^r)$ and $O^-(2m,p^r)$ are isomorphic. 

\end{proof}

\subsection{Lemma \ref{Osyl2}}\label{Ap6}

\begin{lma}[\ref{Osyl2}] Let  
$$S(2,2m) = \{\begin{pmatrix} A & 0_m\\
0_m & (A^{-1})^T\end{pmatrix}\begin{pmatrix} \text{Id}_m & B\\
0_m & \text{Id}_m\end{pmatrix} : A \in \Up_{m}(\F_{2^r}), B \in \asymo(m,2^r)\}.$$ Then $S(2,2m) \in \syl_2(\Omega^\epsilon(2m,2^r))$ for $\epsilon \in \{\pm\}$. \end{lma}

\begin{proof}

Since $\Omega^\epsilon(2m,2^r) \subset O^\epsilon(2m,2^r) \subset Sp(2m,2^r)$, we must have that for $S_1  \in \syl_2(\Omega^\epsilon(2m,2^r)),$ $S_2 \in \syl_2(O^\epsilon(2m,2^r)),$ $S_3 \in \syl_2(Sp(2m,2^r))$, $S_1 \subset S_2 \subset S_3$. It is straightforward to show that for $S_3 \in \syl_2(Sp(2m,2^r)$ for $S_3 = N \rtimes O$ where 
$$N = \{\begin{pmatrix} \text{Id}_m & B\\
0_m & \text{Id}_m\end{pmatrix} : B \in \sym(m,p^r)\} \cong \sym(m,p^r)$$
and
$$O = \{\begin{pmatrix} A & 0_m\\
0_m & (A^{-1})^T\end{pmatrix} : A \in \Up_m(\F_{2^r})\} \cong \Up_m(\F_{2^r}).$$
Note $O$ is a subgroup of both $\Omega^+(2m,2^r)$ and $\Omega^-(2m,2^r)$.  $O$ is isomorphic to $\Up_m(\F_{2^r})$. So $|O| = (2^r)^{m(m-1)/2}$. Let 
$$N' = \{\begin{pmatrix} \text{Id}_m & B\\
0_m & \text{Id}_m\end{pmatrix} : B \in \asymo(m,2^r)\} \subset N$$
Then $N' \cong \asymo(m,2^r)$.  And for $M \in N'$, 
$$M^TA_m^+M = \begin{pmatrix} 0_m & \text{Id}_m\\
0_m & B^T \end{pmatrix}$$
and for $x = (y,z)$, 
$$Q(Mx) = y^Tz + z^TB^Tz$$
And 
\begin{align*}
z^TB^Tz &= \sum_{i,j} B_{i,j}z_iz_j\\
&= \sum_{i < j} 2B_{i,j}z_iz_j + \sum_{i=1}^n B_{i,i}z_i^2 \text{ since } B \in \asymo(m,2^r) \subset \sym(m,2^r)\\
&= 0 \text{ since  we are in characteristic } 2 \text{ and } B_{i,i} = 0, \text{ } \forall i
\end{align*}
Therefore, $Q^+(Mx) = y^Tz = Q^+(x)$ for all $x = (y,z)$. So $N'^+ \subset O^+(2n,p^r)$.  Also, for $M = \begin{pmatrix} \text{Id}_m & B\\
0_n & \text{Id}_m\end{pmatrix} \in N'$, 
\begin{align*} 
M^TA_n^-M &= \begin{pmatrix} \text{Id}_n & 0_n\\
B^T & \text{Id}_m\end{pmatrix} \begin{pmatrix} 0^1_m & \text{Id}_m\\
0_m & 0^d_m\end{pmatrix} \begin{pmatrix} \text{Id}_m & B\\
0_m & \text{Id}_m\end{pmatrix}\\
&= \begin{pmatrix} \text{Id}_m & 0_m\\
B^T & \text{Id}_m\end{pmatrix} \begin{pmatrix} 0^1_m & \text{Id}_m\\
0_m & 0^d_m\end{pmatrix}, \text{ since } B_{m,m} = 0\\
&= \begin{pmatrix} 0^1_m & \text{Id}_m\\
0_m & B^T + 0^d_m\end{pmatrix}
\end{align*}
So for $x = (y,z)$,
\begin{align*}
Q^-(Mx) &= \mbf{y}\mbf{z}^T + y_m^2 + dz_m^2 + \mbf{z}B^T\mbf{z}^T\\
&= \mbf{y}\mbf{z}^T + y_m^2 + dz_m^2  \text{ since } \mbf{z}B^T\mbf{z}^T = 0 \text{ by the work shown above}\\
&= Q^-(x)
\end{align*}
Therefore $N' \subset O^-(2n,p^r)$ as well. And 
$$|N'| = (p^r)^{\sum_{k=1}^{m-1}k} = (p^r)^{m(m-1)/2}.$$
Then consider $N' \rtimes O \subset \Omega^\epsilon(2m,2^r)$ for both $\epsilon = +$ and $\epsilon = -$ (the operation is inherited from $N \rtimes O$). Then we have 
\begin{align*}
|N' \rtimes O| &= |N'| \cdot |O|\\
&= (2^r)^{n(n-1)/2} \cdot (2^r)^{m(m-1)/2}\\
&= 2^{rn(n-1)}
\end{align*}
We learned the following argument from an early draft of \cite{FKW}:\\
\noindent Note that for $M = \begin{pmatrix} A & 0_m\\
0_m & (A^{-1})^T \end{pmatrix} \in O$,

\begin{align*}
\delta_{2m,2^r}^+(M) &= \text{rank}(\text{Id}_{2m} - M) \mod 2\\
&= \text{rank}\begin{pmatrix} \text{Id}_m + A & 0_m\\
0_m & \text{Id}_m + (A^{-1})^T\end{pmatrix} \mod 2\\
&= 2 \text{ rank}(A) \mod 2\\
&= 0
\end{align*}
And for $M = \begin{pmatrix} \text{Id}_m & B\\
0_m & \text{Id}_m\end{pmatrix} \in N'$,

\begin{align*}
\delta_{2m,2^r}^+(M) &= \text{rank}(\text{Id}_{2m} - M) \mod 2\\
&= \text{rank}\begin{pmatrix} 0_m & B\\
0_m & 0_m\end{pmatrix} \mod 2\\
&= \text{rank}(B) \mod 2\\
\end{align*}
And since $B$ is symmetric with $B_{i,i} = 0, \text{ } \forall i$, $B$ determines an alternating symmetric bilinear form, and thus has even rank.

Thus, $\delta_{2m,2^r}^+(M) = 0$ for $M \in N'$ as well.  Hence we have that both $N'$ and $O$ are in $\Omega^+(2m,2^r) = SO^+(2m,2^r) = \ker(\delta_{2m,2^r}^+)$. Therefore, $N' \rtimes O \subset \Omega^+(2n,2^r)$.  And 
$$|N' \rtimes O| = 2^{2m(m-1)} = |\Omega^\epsilon(2m,2^r)|_2$$ Thus we can conclude that for $\epsilon = +, -$,
$$N' \rtimes O \in \syl_2(\Omega^\epsilon(2m,2^r)$$
\end{proof}

\subsection{Lemma \ref{Lem6.18}}\label{Ap7}

For $p \neq 2$, we define 
$$S(p,2m) = \{\begin{pmatrix} A & 0_m\\
0_m & (A^{-1})^T\end{pmatrix}\begin{pmatrix} \text{Id}_m & B\\
0_m & \text{Id}_m\end{pmatrix} : A \in \Up_{m}(\F_{p^r}), B \in \asym(m,p^r)\}.$$
It is easy to show that $S(p,2m)$ is isomorphic to the elements in $\syl_p(\Omega^+(2m,p^r))$ and that
$$S(p,2m) \cong \asym(m,p^r) \rtimes \Up_{m}(\F_{p^r}),$$
where the action is given by $A(B) = ABA^T.$

\begin{lma}[\ref{Lem6.18}] For any prime $p$, $m > 2$, let $S(p,2m) = S(p,2m)$ be defined as above and in Lemma 6.15. Then $$Z(S(p,2m)) = \{\begin{pmatrix} \text{Id}_m & D\\
0_m & \text{Id}_m\end{pmatrix} : D = \begin{pmatrix} 0 & x & \mbf{0}\\
-x & 0 & \mbf{0}\\
\mbf{0} & \mbf{0} & 0_{m-2}\end{pmatrix}\} \cong \F_{p^r}^+ \cong (\Z/p\Z)^r$$
\end{lma}
For the proof, we need the following lemma:

\begin{lemma}\label{ADAO2n} Given $D \in \begin{cases} \asym(m,p^r) &p \neq 2\\
\asym0(m,2^r) &p = 2 \end{cases}$, 
$$AD = D(A^{-1})^T \text{ } \forall A \in \Up_m(\F_{p^r}) \Leftrightarrow D = \begin{pmatrix} 0 & x & \mbf{0}\\
-x & 0 & \mbf{0}\\
\mbf{0} & \mbf{0} & 0_{m-2}\end{pmatrix}.$$ \end{lemma}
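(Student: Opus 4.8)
I would first record that, since $A$ and $A^{-1}$ are unitriangular, multiplying $AD = D(A^{-1})^T$ on the right by $A^T$ shows the condition is equivalent to $ADA^T = D$; thus the lemma asserts that the matrices of the displayed shape are precisely those fixed by the action $A\cdot B = ABA^T$ of $\Up_m(\F_{p^r})$ on $\asym(m,p^r)$ (resp.\ $\asymo(m,2^r)$). The argument then runs in exact parallel with Lemmas \ref{ADA} and \ref{ADAPSP2n}.

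For ($\Leftarrow$), writing $D = x(E_{1,2}-E_{2,1})$ with $E_{i,j}$ the matrix units (in characteristic $2$ this is $x(E_{1,2}+E_{2,1})$, which has zero diagonal, hence lies in $\asymo(m,2^r)$), I would compute both sides directly. Using that the first column of a unitriangular $A$ is $e_1$, its second column is $a_{1,2}e_1 + e_2$, and $(A^{-1})_{1,2} = -a_{1,2}$, one gets $AD = x(E_{1,2} - a_{1,2}E_{1,1} - E_{2,1}) = D(A^{-1})^T$. This is a short routine check, independent of all the other entries of $A$.

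For ($\Rightarrow$) I would induct on $m$. Base case $m = 2$: every element of $\asym(2,p^r)$ (resp.\ $\asymo(2,2^r)$) satisfies $ADA^T = D$ by a one-line computation, and that whole space is exactly $\{\begin{pmatrix}0 & x\\ -x & 0\end{pmatrix}\}$, which is the claimed shape with the $0_{m-2}$ block empty. Induction step $m \ge 3$: substituting $A = \text{Id}_m + aE_{m-1,m}$ yields $AD - D(A^{-1})^T = a\,(E_{m-1,m}D + DE_{m,m-1})$, and forcing this to vanish for all $a \in \F_{p^r}$ forces $d_{j,m} = 0$ for every $j < m-1$. It does not force $d_{m-1,m} = 0$: the $(m-1,m-1)$ entry of $E_{m-1,m}D + DE_{m,m-1}$ is $-d_{m-1,m} + d_{m-1,m} = 0$ identically. (This is the same phenomenon that forced two elementary matrices in Lemma \ref{ADAPSP2n}; here it occurs for every $p$, because $D$ is antisymmetric rather than symmetric, so no factor of $2$ appears.) Substituting next $A = \text{Id}_m + aE_{m-2,m-1}$ --- valid because $m \ge 3$ --- forces, by the analogous computation, $d_{m,m-1} = 0$ and hence $d_{m-1,m} = 0$. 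Together with antisymmetry, the last row and column of $D$ now vanish.

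Finally I would restrict to the upper-left $(m-1)\times(m-1)$ block $D'$, which again lies in $\asym(m-1,p^r)$ (resp.\ $\asymo(m-1,2^r)$, the zero-diagonal condition passing to principal submatrices). Embedding $\Up_{m-1}(\F_{p^r}) \hookrightarrow \Up_m(\F_{p^r})$ by $A' \mapsto \begin{pmatrix} A' & \mbf{0}\\ \mbf{0} & 1\end{pmatrix}$ and using that $D$ has vanishing last row and column, the identity $AD = D(A^{-1})^T$ for such $A$ restricts to $A'D' = D'(A'^{-1})^T$ for all $A' \in \Up_{m-1}(\F_{p^r})$; by the induction hypothesis $D'$ has the claimed shape, hence so does $D$. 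The only genuinely non-mechanical point is the two-transvection step in the induction; everything else is routine matrix arithmetic. Granting Lemma \ref{ADAO2n}, Proposition \ref{ZPO2n} then follows just as its symplectic counterpart Proposition \ref{ZPSp} did.
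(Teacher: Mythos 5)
Your proof is correct and takes essentially the same route as the paper: the converse direction by direct computation, and the forward direction by induction on $m$, using the transvection $\text{Id}_m + aE_{m-1,m}$ to kill $d_{j,m}$ for $j < m-1$ and then (since $m \ge 3$) the transvection $\text{Id}_m + aE_{m-2,m-1}$ to kill $d_{m-1,m}$, after which the problem restricts to the upper-left $(m-1)\times(m-1)$ block. The only cosmetic difference is that the paper also works out $m=3$ explicitly as a second base case, which your argument shows is unnecessary; your observation that the antisymmetry (rather than a factor of $2$) is what makes the diagonal entry cancel, forcing the two-transvection step for every $p$, is exactly the right way to see why this lemma parallels Lemma \ref{ADAPSP2n} rather than Lemma \ref{ADA}.
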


\begin{remark} This lemma is true for any $m \geq 2$. \end{remark}
\noindent Granting this lemmma, we can calculate the center:
\begin{proof}

For $p \neq 2$,
\begin{align*}
S(p,2m) &= \{\begin{pmatrix} A & 0_m\\
0_m & (A^{-1})^T\end{pmatrix}\begin{pmatrix} \text{Id}_m & B\\
0_m & \text{Id}_m\end{pmatrix} : A \in \Up_m(\F_{p^r}), B \in \asym(m,p^r)\}\\
&= \{\begin{pmatrix} A & AB\\
0_m & (A^{-1})^T\end{pmatrix} : A \in \Up_m(\F_{p^r}), B \in \asym(m,p^r)\}.
\end{align*}
and 
\begin{align*}
S(2,2m) &= \{\begin{pmatrix} A & 0_m\\
0_m & (A^{-1})^T\end{pmatrix}\begin{pmatrix} \text{Id}_m & B\\
0_m & \text{Id}_m\end{pmatrix} : A \in \Up_m(\F_{p^r}), B \in \asymo(m,2^r)\}\\
&= \{\begin{pmatrix} A & AB\\
0_n & (A^{-1})^T\end{pmatrix} : A \in \Up_m(\F_{p^r}), B \in \asymo(m,2^r)\}.
\end{align*} 
Note that for any $p$, given $$\begin{pmatrix} A & AB\\
0_m & (A^{-1})^T\end{pmatrix}, \begin{pmatrix} C & CD\\
0_m & (C^{-1})^T\end{pmatrix} \in \Omega^+(2m,2^r)$$
we have
{\footnotesize $$\begin{pmatrix} A & AB\\
0_m & (A^{-1})^T\end{pmatrix}^{-1}\begin{pmatrix} C & CD\\
0_m & (C^{-1})^T\end{pmatrix}\begin{pmatrix} A & AB\\
0_m & (A^{-1})^T\end{pmatrix} = \begin{pmatrix} A^{-1}CA & A^{-1}CAB + A^{-1}CD(A^{-1})^T - B((A^{-1}CA)^{-1})^{T}\\
0_m & ((A^{-1}CA)^{-1})^{T}\end{pmatrix}.$$}
So 
$$\begin{pmatrix} C & CD\\
0_m & (C^{-1})^T\end{pmatrix} \in Z(S(p,2m))$$ if and only if 
$$C \in Z(\Up_m(\F_{p^r})) = \{\begin{pmatrix} 1 & 0 & x\\
\mbf{0} & \text{Id}_{m-2} & \mbf{0}\\
0 & \mbf{0} & 1\end{pmatrix}\}$$ and 
$$CD = CB + CA^{-1}D(A^{-1})^T - B(C^{-1})^{T}, \text{ for all } A \in \Up_m(\F_{p^r}),B \in \begin{cases} \asym(m,p^r) &p \neq 2\\
\asym0(m,2^r) &p = 2 \end{cases}.$$

\begin{remark} For the remainder of this proof $p$ can be any prime. (When $p = 2$, the negatives will go away, but the argument is the same.) \end{remark}

\noindent Choosing $A = \text{Id}_m$, we need 
$$CD = CB + CD - B(C^{-1})^T.$$ So we must have 
$$CB = B(C^{-1})^T$$ for all 
$$B \in \begin{cases} \asym(m,p^r) &p \neq 2\\
\asym0(m,2^r) &p = 2 \end{cases}.$$ Write 
$$C = \begin{pmatrix} 1 & \mbf{0} & x\\
\mbf{0} & \text{Id}_{m-2} & \mbf{0}\\
0 & \mbf{0} & 1\end{pmatrix} \in Z(\Up_m(\F_{p^r})).$$
$$(C^{-1})^T = \begin{pmatrix} 1 & \mbf{0} & -x\\
\mbf{0} & \text{Id}_m & \mbf{0}\\
0 & \mbf{0} & 1\end{pmatrix}^{T} = \begin{pmatrix} 1 & \mbf{0} & 0\\
\mbf{0} & \text{Id}_m & \mbf{0}\\
-x & \mbf{0} & 1\end{pmatrix}.$$
Then for 
$$B = (b_{i,j}) \in \begin{cases} \asym(m,p^r) &p \neq 2\\
\asym0(m,2^r) &p = 2\end{cases},$$ we get 

$$CB = \begin{pmatrix} -xb_{1,m} & b_{1,2}-xb_{2,m} & & \cdots & b_{1,m-1} - xb_{m-1,m} & b_{1,m}\\
-b_{1,2} & 0 & b_{2,3} & & \cdots & b_{2,m}\\
\vdots & & \ddots & & & \vdots\\
-b_{1,m-1} & & & \cdots & & b_{m-1,m}\\
-b_{1,m} & & & \cdots & -b_{m-1,m} & 0\end{pmatrix}$$
and 
$$B(C^{-1})^T = \begin{pmatrix} -xb_{1,m} & b_{1,2} & & \cdots & b_{1,m}\\
-b_{1,2} -x b_{2,m} & 0 & b_{2,3} & \cdots & b_{2,m}\\
\vdots & & \ddots & & \vdots\\
-b_{1,m-1} - xb_{m-1,m} & -b_{2,m-1} & \cdots & & b_{m-1,m}\\
-b_{1,m} & -b_{2,m} & \cdots & -b_{m-1,m} & 0\end{pmatrix}$$
So if $m > 2$, we must have $x = 0$, and hence $C = \text{Id}_m$.  

\begin{remark}This is where I need $m > 2$. \end{remark} 

\noindent So the other requirement above becomes 
$$D = A^{-1}D(A^{-1})^T \Leftrightarrow AD = D(A^{-1})^T$$
for all $A \in \Up_m(\F_{p^r})$. Then by Lemma \ref{ADAO2n}, we get that
$$Z(S(p,2m)) = \{\begin{pmatrix} \text{Id}_m & D\\
0_m & \text{Id}_m\end{pmatrix} : D = \begin{pmatrix} 0 & x & \mbf{0}\\
-x & 0 & \mbf{0}\\
\mbf{0} & \mbf{0} & 0_{m-2}\end{pmatrix}\}$$
\end{proof}

\begin{proof}[Proof of Lemma \ref{ADAO2n}]
\text{}

\noindent $\Leftarrow$: This is a straightforward calculation.

\noindent $\Rightarrow$:
Choose $A = \text{Id} + ae_{i_0,j_0}$, where $e_{i_0,j_0}$ denotes the matrix with a $1$ in the $\{i_0,j_0\}th$ component and zeros elsewhere.  Then
$$(AD)_{i_0,k} = D_{i_0,k} + aD_{j_0,k}.$$ 
And for $i \neq i_0$, $(AD)_{i,j} = D_{i,j}$. Also 
$$(D(A^{-1})^T)_{k,i_0} = D_{k,i_0} - aD_{k,j_0}.$$ 
And for $j \neq i_0$, $(AD)_{i,j} = D_{i,j}$.

Thus in order for these to be equal, we must have $D_{k,j_0} = 0$ for all $k \neq i_0$.  Then for any $j_0 > 2$, we can choose $i_0 = j_0 - 1$ and $i_0 = 1$ to get $D_{k,j_0} = 0$ for all $k$.  If $j_0 = 2$, then we only can choose $i_0 = 1$. Thus we get that $D_{k,j_0} = 0$ for all $k \neq 1$.  Thus $D_{k,j} = 0$ for all $k,j$ except $k = 1, j = 2$.  

\end{proof}

\subsection{Section 6.4 Calculation}\label{6.4calc}
The calculation that $H \in L_\mbf{b}$ if and only if $\psi(\mbf{b} \cdot (\mbf{hdh^T} - \mbf{d})) = 1$ for all $\mbf{d} \in (\F_{p^r}^+)^{m(m-1)/2},$
where $\mbf{hdh^T}$ is the vector in $(\F_{p^r}^+)^{m(m-1)/2}$ corresponding to $HDH^T \in \sym(m,p^r)$ under the isomorphsim $\sym(m,p^r) \cong (\F_{p^r}^+)^{m(m-1)/2}$:

\begin{remark} In all of the following, we view $\psi_{(b_j)}$ as a map on $$\begin{cases} \Delta \cong \asym(m,p^r) \cong \F_{p^r}^{m(m-1)/2} &p \neq 2\\
\Delta \cong \asymo(m,2^r) \cong \F_{2^r}^{m(m-1)/2} &p = 2 \end{cases}.$$ So $\psi_{(b_j)}(D,\text{Id}) = \psi_{(b_j)}(D) = \psi(\mbf{b} \cdot \mbf{d})$, where $\mbf{b} = (b_j)$ and $\mbf{d}$ is the vector corresponding to the matrix $D$. \end{remark} 
\noindent The action of $h \in \syl_p(\Omega^+(2m,p^r))$ on $\widehat{\Delta}$ is given by $${}^h\psi(D,\text{Id}_m) = \psi(h^{-1}(D,\text{Id}_m)h).$$
So for $h = (0_m,H^{-1})$, the action on $\psi_{(b_j)}$ is given by $${}^h\psi_{(b_j)}(D,\text{Id}_m) = \psi_{(b_j)}((0_m,H)(D,\text{Id}_m)(0_m,H^{-1})).$$
So $(0_m,H^{-1}) \in L_\mbf{b}$ if and only if

$$\psi_{(b_j)}((0_m,H)(D,\text{Id}_m)(0_m,H^{-1})) = \psi_{(b_j)}(D, \text{Id}_m)$$
for all $$\mbf{d} \in (\F_{p^r}^+)^{m(m-1)/2} \text{ corresponding to }D \in \begin{cases} \asym(m,p^r) &p \neq 2\\
\asymo(m,2^r) &p = 2 \end{cases}.$$
Let $\mbf{hdh^T}$ be the vector corresponding to $HDH^T$. Then since
$$\psi_{(b_j)}((0_m,H)(D,\text{Id}_m)(0_m,H^{-1})) = \psi(\mbf{b} \cdot \mbf{hdh^T}),$$
 and
$$\psi_{(b_j)}(D,\text{Id}_m) = \psi(\mbf{b} \cdot \mbf{d}).$$
we get that  $(0_m,H^{-1}) \in L_\mbf{b}$ if and only if 
$$\psi(\mbf{b} \cdot (\mbf{hdh^T} - \mbf{d})) = 1$$
for all 
$$\mbf{d} \in (\F_{p^r})^{m(m-1)/2} \text{ corresponding to } D \in \begin{cases} \asym(m,p^r) &p \neq 2\\
\asymo(m,2^r) &p = 2 \end{cases}.$$

\subsection{Proposition \ref{Ls1}}\label{Ap8}
\begin{prop}[\ref{Ls1}] For any prime $p$,
$$\min_{\mbf{b} \in  (\F_{p^r}^+)^{m(m-1)/2}, \text{ } b_1 \neq 0} \dim(\theta_{\mbf{b},1}) =  p^{2r(m-2)}.$$
This minimum is achieved when $\mbf{b} = (b,0,\ldots,0)$ with $b \neq 0$.\end{prop}

\begin{proof} 

Write $$H = \begin{pmatrix} 1 & h_{1,2} & h_{1,3} & \cdots & h_{1,n} \\
0 & 1 & h_{2,3} & \cdots & h_{2,m}\\
& & \ddots & & \vdots\\
0 & 0 & \cdots & 1 & h_{m-1,m}\\
0 & 0 & 0 & \cdots & 1\end{pmatrix}, \qquad D = \begin{pmatrix} 0 & d_{1,2} & d_{1,3} & \cdots & d_{1,m}\\
-d_{1,2} & 0 & d_{2,3} & \cdots & d_{2,m}\\
\vdots &  & \ddots &  & \vdots\\
-d_{1,m-1} & -d_{2,m-1} & \cdots & 0 & d_{m-1,m}\\
-d_{1,m} & -d_{2,m} & \cdots & -d_{m-1,m} & 0\end{pmatrix}.$$

We will prove the proposition in two steps:

\textbf{Step 1: Proving that for any } $\mbf{s = (b_i), b_1 \neq 0, |L_\mbf{b}| \leq |\F_{p^r}| \cdot |U_{m-2}(\F_{p^r})| =  p^{2r(m-2)}}.$

\noindent In all the following, in characteristic 2, the negatives will go away, but the argument is the same.

\begin{calc'} For $j_0 > 2$, choose $d_{i,j} = 0$ except for $d_{1,j_0} = -d_{j_0,1}$. \end{calc'} 

\noindent Then $$\mbf{b} \cdot (\mbf{hdh^T} - \mbf{d}) =  \sum_{i=2}^{j_0-1} h_{i,j_0}d_{1,j_0}B_{1,i} = d_{1,j_0}\left(\sum_{i=2}^{j_0-1} h_{i,j_0}B_{1,i}\right)$$
If $\sum_{i=2}^{j_0-1} h_{i,j_0}B_{1,i} \neq 0$, then as we run through all the values for $d_{1,j_0}$, we will get that $\mbf{b} \cdot (\mbf{hdh^T} - \mbf{d})$ runs through all the values of $\F_{p^r}$. And since $\psi$ is non-trivial, this means that $\psi(\mbf{b} \cdot (\mbf{hdh^T} - \mbf{d}))$ cannot always equal $1$.  This is a contradiction. So we must have 
$$\sum_{i=2}^{j_0-1} h_{i,j_0}B_{1,i} = 0$$ for all choices of $j_0 > 2$. Recall that $B_{1,2} = b_1 \neq 0$. So, for all $j_0 > 2$, given $h_{i,j_0}$ for $i > 2$, the above dictates $h_{2,j_0}$: If we know $h_{i,j_0}$ for $i > 1$, then we have
$$\sum_{i=2}^{j_0-1} h_{i,j_0}B_{1,i} = 0 \Rightarrow h_{2,j_0} = \frac{-1}{B_{1,2}} \sum_{i=3}^{j_0-1} h_{i,j_0}B_{1,i}.$$
(In particular, note $h_{2,3} = 0$.) For $3 \leq k \leq n$, if $B_{1,k} \neq 0$, then for all $j_0 > 2$, given $h_{i,j_0}$ for $i \neq 1,k$, the above dictates $h_{k,j_0}$: If we know $h_{i,j_0}$ for $i \neq 1,k$, then we have
$$\sum_{i=2}^{j_0-1} h_{i,j_0}B_{1,i} = 0 \Rightarrow h_{k,j_0} = \frac{-1}{B_{1,k}} \sum_{i=2, i \neq k}^{j_0-1} h_{i,j_0}B_{1,i}.$$

\bigskip

\begin{calc'} Now for $j_0 > 2$, choose $d_{i,j} = 0$ except for $d_{2,j_0} = -d_{j_0,2}$. \end{calc'}

\noindent Then
$$\mbf{b} \cdot (\mbf{hdh^T} - \mbf{d}) = d_{2,j_0}\left(-B_{1,2}h_{1,j_0} + \sum_{i=2}^{j_0} B_{1,i}h_{i,j_0}h_{1,2} + \sum_{i=3}^{j_0-1} B_{2,i}h_{i,j_0}\right)$$
By the same reasoning as before, we must have 
$$-B_{1,2}h_{1,j_0} + \sum_{i=2}^{j_0} B_{1,i}h_{i,j_0}h_{1,2} + \sum_{i=3}^{j_0-1} B_{2,i}h_{i,j_0} = 0$$
for all choices of $j_0 > 2$. Recall that $B_{1,2} = b_1 \neq 0$. So for all $j_0 > 2$, given $h_{i,j_0}$ for $i > 2$, the above dictates $h_{1,j_0}$: If we know $h_{1,2}$ and $h_{i,j_0}$ for $i > 1$, then we have
$$-B_{1,2}h_{1,j_0} + \sum_{i=2}^{j_0} B_{1,i}h_{i,j_0}h_{1,2} + \sum_{k=3}^{j_0-1} B_{2,i}h_{i,j_0} = 0 \Rightarrow h_{1,j_0} = \frac{1}{B_{1,2}} \left(\sum_{i=2}^{j_0} B_{1,i}h_{i,j_0}h_{1,2} + \sum_{i=3}^{j_0-1} B_{2,k}h_{i,j_0}\right)$$

\bigskip

\noindent Thus we can conclude that for all $s = (b_i)$ with $b_1 \neq 0$,
$$|L_\mbf{b}| \leq |\{H : H_{2,j} \text{ fixed }, \forall j > 2, H_{1,j} \text{ fixed }, \forall j > 2\}| = |\F_{p^r}| \cdot |U_{m-2}(\F_{p^r})| = p^{r[(m-2)(m-3)/2 + 1]}.$$

\textbf{Step 2: Exhibiting that the max is achieved when } $\mbf{s = (b, 0, \cdots, 0)}$ \textbf{ with } $\mbf{b \neq 0}.$

\noindent Let $B$ be the matrix corresponding to $s = (b,0,\cdots,0)$. So since the only nonzero entry of $B$ is $B_{1,2} = b$, we have that
 $$\mbf{b} \cdot (\mbf{hdh^T} - \mbf{d}) = b(HDH^T - D)_{1,2} = b\left([\sum_{l = 2}^n [h_{2,l}(\sum_{k=1}^{l} d_{k,l}h_{1,k} - \sum_{k=l+1}^{m-1} d_{l,k}h_{1,k})]] - d_{1,2}\right).$$
By the first calculation above, we have that for $j_0 > 2$,
$$h_{2,j_0} = \frac{-1}{B_{1,2}} \sum_{i=3}^{j_0-1} h_{i,j_0}B_{1,i} = 0.$$
So we have 
$$\mbf{b} \cdot (\mbf{hdh^T} - \mbf{d}) = b\left([\sum_{k=1}^{2} d_{k,2}h_{1,k} - \sum_{k=3}^{m-1} d_{2,k}h_{1,k}] - d_{1,2}\right)$$
By the second calculation above, we have that for $j_0 > 2$,
\begin{align*}
h_{1,j_0} &= \frac{1}{B_{1,2}} \left(\sum_{i=2}^{j_0} B_{1,i}h_{i,j_0}h_{1,2} + \sum_{i=3}^{j_0-1} B_{2,k}h_{i,j_0}\right)\\
&= h_{2,j_0}h_{1,2}\\
&= 0
\end{align*}
So we have
\begin{align*} 
\mbf{b} \cdot (\mbf{hdh^T} - \mbf{d}) &= b\left([\sum_{k=1}^{2} d_{k,2}h_{1,k} - ] - d_{1,2}\right)\\
&= b(d_{1,2}h_{1,1} + d_{2,2}h_{1,2} - d_{1,2})\\
&= 0 \text{ since } h_{1,1} = 0, d_{2,2} = 0
\end{align*}
Thus we have shown that $(0_m,H^{-1}) \in L_\mbf{b}$ if and only if $h_{2,j} = 0, \forall j > 2$ and $h_{1,j} = 0, \forall j > 2$. Therefore, $$L_\mbf{b} = \{(0_m,H^{-1}) : H_{1,j} = 0, \forall j > 2, H_{2,j} = 0, \forall j > 2\}.$$
So $|L_\mbf{b}| = |\F_{p^r}| \cdot |U_{m-2}(\F_{p^r})| = p^{r[(m-2)(m-3)/2 + 1]}$.

\end{proof}

\bibliographystyle{plain}
\bibliography{references}

\end{document}